\newtheorem{thm}{Theorem}[section]
\newtheorem*{theorem*}{Theorem}
\newtheorem*{acknowledgement*}{Acknowledgement}
\newtheorem{cor}[thm]{Corollary}
\newtheorem{lem}[thm]{Lemma}
\newtheorem{prop}[thm]{Proposition}
\theoremstyle{definition}
\theoremstyle{remark}
\newtheorem{rem}[thm]{Remark}
\theoremstyle{definition}
\newtheorem{definition}[thm]{Definition}
\numberwithin{equation}{section}
\newcommand{\set}[1]{\left\{#1\right\}}
\newcommand{\Real}{\mathbb R}
\newcommand{\dist}[0]{\mathrm{dist}}
\newcommand{\ct}[0]{\mathrm{ct}}
\title{Colding-Minicozzi Entropies in Cartan-Hadamard Manifolds}
	\author{Jacob Bernstein}
\address{Department of Mathematics, Johns Hopkins University, 3400 N. Charles Street, Baltimore, MD 21218}
\email{bernstein@math.jhu.edu}
\author{Arunima Bhattacharya}
\address{Mathematical Sciences Research Institute/ Simons Laufer Mathematical Sciences Institute, 17 Gauss Way,
Berkeley, CA 94720 }
\email{arunimab@msri.org}
\begin{document}

\begin{abstract}
We introduce a family of functionals on submanifolds of Cartan-Hadamard manifolds that generalize the Colding-Minicozzi entropy of submanifolds of Euclidean space. We show that these functionals are monotone under mean curvature flow under natural conditions. 
As a consequence, we obtain sharp lower bounds on these entropies for certain closed hypersurfaces and observe a novel rigidity phenomenon.
\end{abstract}
\maketitle
\section{Introduction}
In \cite{Coldinga}, Colding and Minicozzi introduced the following functional on the space of $n$-dimensional submanifolds $\Sigma\subset \Real^{n+k}$,
$$
\lambda[\Sigma]=\sup_{\mathbf{x}_0\in \Real^{n+k}, \tau>0} (4\pi \tau)^{-\frac{n}{2}}\int_{\Sigma} {e^{-\frac{|\mathbf{x}(x)-\mathbf{x}_0|^2}{4\tau}}} dVol_{\Sigma}(x).
$$
Here, $(t,x)\mapsto (4\pi t)^{-\frac{n}{2}} {e^{-\frac{|\mathbf{x}(x)|^2}{4t}}}$ is the heat kernel of $\Real^n$.  In the definition of $\lambda[\Sigma]$, this kernel is extended to all of $\Real^{n+k}$ in the obvious way.
Colding and Minicozzi called this quantity the \emph{entropy} of $\Sigma$ and observed that, by Huisken's monotonicity formula \cite{HuiskenMon}, it is monotone along reasonable mean curvature flows.  That is, if $t\in [0,T)\mapsto \Sigma_t\subset \Real^{n+k}$ is a family of complete $n$-dimensional submanifolds of polynomial volume growth that satisfy
$$
\frac{d\mathbf{x}}{dt}=\mathbf{H}_{\Sigma_t},
$$
then $\lambda[\Sigma_t]$ is monotone non-increasing in $t$.  Here $\mathbf{H}_{\Sigma_t}=\Delta_{\Sigma_t} \mathbf{x}=-H_{\Sigma_t} \mathbf{n}_{\Sigma_t}$ is the mean curvature vector of $\Sigma_t$ and a submanifold, $\Sigma$, has polynomial volume growth if $Vol_{\Real}(\Sigma\cap B_R)\leq N (1+R)^N$ for some $N>0$.  This notion of entropy has been extensively studied in recent years.  See, for instance, \cite{BWIsotopy, BernsteinWang1, JZhu, CIMW, BernsteinWang2, BernsteinWang3, BWHausdorff, SWang, KetoverZhou}.  

 In \cite{BernsteinHypEntropy}, an analogous quantity was introduced by the first author for submanifolds of hyperbolic space, $\mathbb{H}^{n+k}$.  In particular, it was shown that there is a notion of entropy that is monotone non-increasing along mean curvature flows that have exponential volume growth, i.e.,
$$
Vol_{\mathbb{H}}(\Sigma\cap \mathcal{B}^{\mathbb{H}}_R(x_0)) \leq N e^{NR}
$$
for some $N>0$.    See \cite{mramorEntropyGenericMean2018, zhuGeometricVariationalProblems2018, sunEntropyClosedManifold2019, sunEntropyClosedManifoldCorrection} for other extensions of the concepts to curved ambient manifolds.  For applications of the Colding-Minicozzi entropy in hyperbolic space, we refer to \cite{bernsteinRAPAMS} and \cite{JunfuYaoMountainPass}.

In this note, we generalize and unify both quantities by introducing a one-parameter family of entropies defined on submanifolds of any Cartan-Hadamard manifold.    Indeed, suppose that $(M,g)$ is a Cartan-Hadamard manifold, i.e., a complete simply connected Riemannian manifold with non-positive sectional curvature.  For any $n\geq 1$ and $\kappa\geq 0$, let
\begin{equation}\label{KnkappaEqn}
K_{n,\kappa}(t,r)= \left\{\begin{array}{cc} \kappa^nK_n(\kappa^2 t, \kappa r) & \kappa,t>0,  r\geq 0\\
	(4\pi t)^{-n/2} e^{-\frac{r^2}{4 t}} & \kappa=0, t>0, r\geq  0
\end{array}\right.
\end{equation}
where $K_n$ is the function introduced in \cite{daviesHeatKernelBounds1988} and used in \cite{BernsteinHypEntropy} -- see Section \ref{HeatKernelSec}.  In particular, this function makes
$$
H_{n, \kappa}(t,x; t_0, x_0)=K_{n,\kappa}(t-t_0, \dist_{g}(x,x_0))
$$
the heat kernel on $(M,g)$ with singularity at $x_0$ and time $t_0$ precisely when $(M,g)$ is the space form of constant curvature $-\kappa^2$.

Following \cite{Coldinga} and \cite{BernsteinHypEntropy}, let
$$
\Phi_{n, \kappa}^{t_0,x_0}(t,x)=K_{n,\kappa}(t_0-t, \dist_{g}(x,x_0))
$$
and for $\Sigma\subset M$, an $n$-dimensional submanifold,  define \emph{the Colding-Minicozzi $\kappa$-entropy of $\Sigma$ in $(M,g)$} to be
$$
\lambda_g^\kappa[\Sigma]=\sup_{x_0\in M, \tau>0}\int_{\Sigma} \Phi_{n, \kappa}^{0, x_0} (-\tau, x) dVol_{\Sigma}(x)=\sup_{x_0\in M, \tau>0}\int_{\Sigma} \Phi_{n, \kappa}^{\tau, x_0} (0, x) dVol_{\Sigma}(x).
$$
When $\kappa=0$ and $(M,g)=(\Real^{n+k}, g_{\Real})$ is Euclidean space, this is just the usual Colding-Minicozzi entropy, $\lambda[\Sigma]$, of $\Sigma$. When $\kappa=1$ and $(M,g)=(\mathbb{H}^{n+k}, g_{\mathbb{H}})$ is hyperbolic space, this is the entropy in hyperbolic space, $\lambda_{\mathbb{H}}[\Sigma]$, introduced in \cite{BernsteinHypEntropy}.

The main result of this paper is a monotonicity property of an appropriate range of entropies along well-behaved mean curvature flows when the ambient Cartan-Hadamard manifold has sectional curvature bounded above by $-\kappa^2_0$.
\begin{thm}\label{MainMonThm}
	Let $(M,g)$ be an $(n+k)$-dimensional Cartan-Hadamard manifold with $\sec_{g}\leq -{\kappa}^2_0$. If $0\leq \kappa \leq \kappa_0$ and $t\in [t_1,t_2]\mapsto \Sigma_{t}\subset M$ is a locally smooth mean curvature flow of $n$-dimensional complete submanifolds of exponential volume growth and $Ric_g \geq -\Lambda^2 g$ on $\Sigma_t$, for $t\in [t_1,t_2]$ and a uniform $\Lambda\geq 0$, then  
$$
\lambda_{g}^\kappa[\Sigma_{t_1}]\geq \lambda_{g}^\kappa[\Sigma_{t_2}].
$$

In particular, in any Cartan-Hadamard manifold,  $t\mapsto \lambda^0_g[\Sigma_t]$ is monotone non-increasing along a classical mean 
curvature flow of closed hypersurfaces.
\end{thm}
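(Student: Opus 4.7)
The plan is to carry out a Huisken-type monotonicity calculation for $\int_{\Sigma_t}\Phi\,dVol$ with $\Phi = \Phi_{n,\kappa}^{t_0,x_0}$ and then pass to the supremum over the basepoint. Fix $(t_0, x_0)$ and set $\tau = t_0-t$, $r = \dist_g(\cdot, x_0)$, so that $\Phi = K_{n,\kappa}(\tau, r)$. Starting from the first variation along MCF,
\[
\frac{d}{dt}\int_{\Sigma_t}\Phi\,dVol = \int_{\Sigma_t}\bigl(\partial_t\Phi + \langle\nabla^M\Phi,\mathbf{H}\rangle - |\mathbf{H}|^2\Phi\bigr)\,dVol,
\]
I would add the identity $\int_{\Sigma_t}\Delta_{\Sigma_t}\Phi\,dVol = 0$ (whose justification is the main technical point, discussed below) and then expand using two inputs: the radial heat equation $\partial_\tau K_{n,\kappa} = \partial_r^2 K_{n,\kappa} + (n-1)\mathcal{C}_\kappa(r)\partial_r K_{n,\kappa}$ satisfied by $K_{n,\kappa}$ on the model space of curvature $-\kappa^2$ (where $\mathcal{C}_\kappa(r) = \kappa\coth\kappa r$ for $\kappa>0$ and $1/r$ for $\kappa = 0$), and the Hessian comparison
\[
\nabla^{M,2}r \geq \mathcal{C}_\kappa(r)\bigl(g - dr\otimes dr\bigr),
\]
which is available because $\sec_g \leq -\kappa_0^2 \leq -\kappa^2$ and implies $\Delta_{\Sigma_t}r \geq \mathcal{C}_\kappa(r)(n-|\nabla^{\Sigma_t}r|^2) + \langle\nabla^M r,\mathbf{H}\rangle$.

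Multiplying the Hessian inequality by $\partial_r K_{n,\kappa}$ (which is negative) reverses it, and after completing the square in $\mathbf{H}$ the integrand should rearrange into
\[
-\Phi\left|\mathbf{H} - \frac{\partial_r K_{n,\kappa}}{K_{n,\kappa}}(\nabla^M r)^\perp\right|^2 \;-\; (1-|\nabla^{\Sigma_t}r|^2)\,K_{n,\kappa}\bigl[(\log K_{n,\kappa})_{rr} - \mathcal{C}_\kappa(\log K_{n,\kappa})_r\bigr].
\]
The first summand is manifestly non-positive. The bracketed factor in the second summand vanishes identically in the Euclidean case ($\kappa=0$ and $K_{n,0}$ the Gaussian), so one recovers Huisken's formula with equality; in general it is non-negative, which is precisely the log-concavity-type property of the Davies kernel $K_{n,\kappa}$ that makes the whole definition of $\lambda^\kappa_g$ behave. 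Both summands are therefore pointwise non-positive, so $\tfrac{d}{dt}\int_{\Sigma_t}\Phi\,dVol \leq 0$, and taking the supremum over $(x_0,\tau)$ yields the monotonicity of $\lambda_g^\kappa[\Sigma_t]$.

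The main obstacle is justifying $\int_{\Sigma_t}\Delta_{\Sigma_t}\Phi\,dVol = 0$ for non-compact $\Sigma_t$. I plan a cutoff argument: the exponential volume growth combined with the Gaussian-type decay of $K_{n,\kappa}(\tau, r)$ and its derivatives gives absolute integrability of $\Phi$, $|\nabla^{\Sigma_t}\Phi|$, and $|\Delta_{\Sigma_t}\Phi|$, while the uniform Ricci bound $Ric_g \geq -\Lambda^2 g$ along $\Sigma_t$ produces, via a Laplacian comparison applied to the intrinsic distance on $\Sigma_t$, Lipschitz cutoff functions $\chi_R$ with uniformly controlled gradient, so that $\int_{\Sigma_t}\langle\nabla\chi_R,\nabla^{\Sigma_t}\Phi\rangle\,dVol \to 0$ as $R\to\infty$. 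For the final clause of the theorem, closed hypersurfaces at $\kappa = 0$ eliminate the integrability issue, the Ricci bound is automatic, and the Hessian comparison at $\kappa = 0$ holds in any Cartan-Hadamard manifold.
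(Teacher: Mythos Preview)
Your overall strategy---a Huisken-type computation whose sign is controlled by the Hessian comparison $\nabla^{M,2}r\ge \mathcal C_\kappa(r)(g-dr\otimes dr)$ together with the super-convexity $(\log K_{n,\kappa})_{rr}-\mathcal C_\kappa(\log K_{n,\kappa})_r\ge 0$---is exactly the paper's. Your pointwise integrand matches what the paper obtains after bounding its term $Q^{t_0,x_0}_{n,\kappa}$ from below, so that part is fine.

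The gap is in the cutoff justification, and it hides where the Ricci hypothesis is actually used. First, the bound $Ric_g\ge -\Lambda^2 g$ is on the \emph{ambient} Ricci tensor; it gives no Laplacian comparison for the \emph{intrinsic} distance on $\Sigma_t$, since the intrinsic Ricci curvature of $\Sigma_t$ involves the second fundamental form, on which you have no control. Second, and more structurally, writing the first variation as $\int_{\Sigma_t}(\partial_t\Phi+\langle\nabla^M\Phi,\mathbf H\rangle-|\mathbf H|^2\Phi)$ already presupposes integrability of $|\mathbf H|^2\Phi$; the problem is not only the boundary term in $\int\Delta_{\Sigma_t}\Phi$.

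The paper's resolution is to keep a compactly supported test function $\psi_R=\chi_R(\rho)$, built from the \emph{ambient} distance $\rho=\dist_g(\cdot,x_0)$, throughout the computation and pass to the limit only at the end. The error term is $\int_{\Sigma_t}\bigl((\tfrac{d}{dt}-\Delta_{\Sigma_t})\psi_R\bigr)\Phi$. Since $\psi_R$ is time-independent, $\tfrac{d}{dt}\psi_R=\mathbf H\cdot\nabla_g\psi_R$, which cancels against the $\mathbf H$-contribution in $\Delta_{\Sigma_t}\psi_R$, leaving
\[
\Bigl(\tfrac{d}{dt}-\Delta_{\Sigma_t}\Bigr)\psi_R=-\Delta_g\psi_R+\sum_{i=1}^k\nabla_g^2\psi_R(E_i,E_i).
\]
This involves only ambient second derivatives of $\rho$: $\Delta_g\rho$ is bounded above by the ambient Laplacian comparison coming from $Ric_g\ge -\Lambda^2 g$, and the normal trace of $\nabla_g^2\rho$ is bounded below by the Cartan--Hadamard Hessian comparison. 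Together with $\chi_R'\le 0$ this yields $(\tfrac{d}{dt}-\Delta_{\Sigma_t})\psi_R\le CR^{-1}\psi_{2R}$, and the error vanishes as $R\to\infty$ by the exponential volume growth. This is precisely where, and only where, the lower Ricci bound enters the argument.
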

As observed in \cite{BernsteinHypEntropy}, this monotonicity property allows one to generalize the results of \cite{BWIsotopy, BernsteinWang1, JZhu, BernsteinWang2, BernsteinWang3, BWHausdorff, SWang} on properties of the Colding-Minicozzi entropy of closed hypersurfaces in Euclidean space to the curved setting.  The extension to spaces of variable curvature also leads to an interesting rigidity phenomenon that is reminiscent of the classical Cartan-Hadamard conjecture on the sharp isoperimetric inequality in Cartan-Hadamard manifolds. 
\begin{thm}\label{MainRigidThm}
	Let $(M,g)$ be an $(n+1)$-dimensional Cartan-Hadamard manifold.  If $\Omega\subset M$ is a compact domain with $\partial \Omega$ smooth and mean convex, then
	$$
	\lambda_g^0[\partial \Omega]\geq  \lambda[\mathbb{S}^n].
	$$
	Equality holds if and only if, for some $R>0$, $(\Omega, g|_{\Omega})$ is isometric to $(\bar{B}_R, g_{\Real}|_{\bar{B}_R})$, the standard Euclidean metric restricted to the closed Euclidean ball in $\Real^{n+1}$ of radius $R$.
	When $n=1$ or $2$, the same result is true without the hypothesis that $\partial \Omega$ is mean convex.
\end{thm}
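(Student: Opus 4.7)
The plan is to run mean curvature flow starting from $\partial \Omega$ and apply the $\kappa = 0$ case of Theorem \ref{MainMonThm} -- which is valid in any Cartan-Hadamard manifold -- to reduce the question to the Euclidean lower bound on the entropy of self-shrinkers.

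Since $\Omega$ is compact with $\partial\Omega$ smooth and mean convex, standard short-time existence together with the maximum principle (using $\sec_g \leq 0$) produces a classical mean curvature flow $t \mapsto \Sigma_t$ on $[0,T)$ with $\Sigma_0 = \partial \Omega$ that preserves mean convexity and decreases the enclosed volume, so $T<\infty$. Theorem \ref{MainMonThm} gives
\[
\lambda_g^0[\Sigma_t] \leq \lambda_g^0[\partial \Omega], \quad t \in [0,T).
\]
At a singular point $x_T$ of $\Sigma_T$, parabolic rescaling in Riemannian normal coordinates produces a subsequential tangent flow $\{\tilde \Sigma_s\}_{s<0}$ in $\R^{n+1}$ which is a non-flat self-shrinking MCF; the rescaled ambient converges smoothly to Euclidean space, so $\lambda[\tilde \Sigma_{-1}] \leq \lambda_g^0[\partial \Omega]$. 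Since mean convexity passes to the tangent flow, Huisken's classification shows $\tilde \Sigma_{-1}$ is a round sphere or a generalized cylinder $\mathbb{S}^k \times \R^{n-k}$, for both of which the Colding-Ilmanen-Minicozzi-White bound \cite{CIMW} yields $\lambda[\tilde \Sigma_{-1}] \geq \lambda[\mathbb{S}^n]$, establishing the inequality.

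For rigidity, assume $\lambda_g^0[\partial \Omega] = \lambda[\mathbb{S}^n]$. Then the chain of inequalities above is saturated, so $\tilde \Sigma_{-1}$ is a round sphere by the rigidity in \cite{CIMW} and the monotonicity in Theorem \ref{MainMonThm} holds with equality along the entire flow. The error terms in the curved monotonicity formula are nonnegative integrands involving the sectional curvature of $g$; their vanishing forces $\sec_g \equiv 0$ on the region swept by the flow. Since $(M,g)$ is simply connected and flat on this region, it is isometric to an open subset of $\R^{n+1}$. A positivity-of-the-Gaussian-kernel argument then shows $\partial \Omega$ must be connected (otherwise other components would contribute strictly positively to $\lambda_g^0[\partial\Omega]$ and break equality), so by Schoenflies $\Omega$ is a topological ball, and the self-similar shrinking to the tangent sphere identifies $\Omega$ with a standard Euclidean ball of the corresponding radius.

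For $n=1$ and $n=2$ the mean convex hypothesis can be dropped by using more general flows: for $n=1$, Grayson's theorem on Riemannian surfaces (noting that Cartan-Hadamard surfaces contain no closed geodesics) shows the curve-shortening flow shrinks any embedded closed curve to a round point, producing a round-circle tangent flow; for $n=2$, a Brakke or level-set flow provides a tangent flow to which the Bernstein-Wang result \cite{BernsteinWang1} --- that every non-flat self-shrinker in $\R^3$ has entropy at least $\lambda[\mathbb{S}^2]$ --- applies. The main obstacle is the rigidity step: translating the saturation of the curved monotonicity into pointwise flatness of $(M,g)$ on the swept region requires careful tracking of the curvature terms appearing in the proof of Theorem \ref{MainMonThm}, and the subsequent globalization to an isometric identification of $\Omega$ with a Euclidean ball requires a modest topological/analytic-continuation argument.
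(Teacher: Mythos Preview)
Your approach to the inequality matches the paper's (Proposition \ref{LowerBoundProp}): run MCF, extract a tangent flow at the first singularity, and use classification (mean-convex case: generalized cylinders via \cite{HASLHOFER20181137}; $n=1,2$: \cite{AbreschLanger}, \cite{BernsteinWang2}) to bound from below by $\lambda[\mathbb{S}^n]$.

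The rigidity step, however, has the gap you yourself flag, and it is not merely technical. Suppose $\lambda_g^0[\partial\Omega]=\lambda[\mathbb{S}^n]$ and the first singularity at $(T,x_0)$ is spherical; then indeed $\int_{\Sigma_t}\Phi_{n,0}^{T,x_0}\equiv\lambda[\mathbb{S}^n]$ on $[0,T)$, so $Q_{n,0}^{T,x_0}\equiv 0$ along the flow. But in the $\kappa=0$ case of Lemma \ref{QPosLem}, condition \eqref{StrictCase2} is vacuous and condition \eqref{StrictCase1} only forces $\sec_g(\gamma'\wedge E_i)=0$ for the $E_i$ \emph{tangent} to $\Sigma_t$; this yields $\sec_g(\nabla_g\rho\wedge W)=0$ for all $W$ only at points $x\in\Sigma_t$ where $\nabla_g\rho(x)$ is \emph{transverse} to $\Sigma_t$. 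For a merely mean-convex $\Sigma_t$ this transversality can fail, so one cannot conclude flatness on the full swept region directly. The paper closes this via a bootstrap (Proposition \ref{RigidConvexProp} and Theorem \ref{RigidityThm}): near the spherical singularity the $\Sigma_t$ are uniformly convex, Huisken's theorem \cite{Huisken1986} then guarantees geodesic convexity of $\Omega_t$ and round-point convergence, and convexity supplies the transversality needed to deduce flatness on $\Omega_t$ for $t$ near $T$; a continuation argument (the set of times $t$ with $\Omega_t$ isometric to a Euclidean ball is open and closed in $[0,T)$) then pushes this back to $t=0$.

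Your endgame also needs adjustment: once $\Omega$ is flat, isometrically embed it in $\Real^{n+1}$ and invoke the Euclidean rigidity of \cite{BernsteinWang1, JZhu} to identify $\partial\Omega$ with a round sphere; Schoenflies is unavailable in general dimension and in any case gives only a topological, not isometric, identification.
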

\begin{rem}
When the ambient manifold  is flat and $n=1$ or $2$ this is the main result of \cite{Grayson1989} or \cite{BernsteinWang2}. When $n\geq 3$ this is a special, and easier, case of \cite{BernsteinWang1} and \cite{JZhu}. Without the hypothesis of mean convexity, the arguments of \cite{BernsteinWang1} and \cite{JZhu} require a suitable weak formulation of mean curvature flow in order to flow past singularities.  A subtlety that complicates things in the curved setting is that without a uniform lower bound on the Ricci curvature certain pathological behavior may arise -- see \cite{Ilmanen1992,Ilmanen1993} and Remark \ref{RicCondRem}.  This seems to be only a technical issue, but for the sake of simplicity, we impose hypotheses that allow us to remain in the setting of classical mean curvature flows.  
\end{rem}

As a final remark, we mention that in \cite{BernsteinHypEntropy} the first author showed that if $\Sigma$ is an $n$-dimensional submanifold of $\mathbb{H}^{n+k}$ that is regular up to the ideal boundary, then there is an inequality  between $\partial_\infty\Sigma$, the conformal volume of the ideal boundary of $\Sigma$, which is a subset of the ideal boundary of $\mathbb{S}^{n+k-1}=\partial_\infty \mathbb{H}^{n+k}$ the ideal boundary of hyperbolic space, and $\lambda_{\mathbb{H}}[\Sigma]$, the Colding-Minicozzi entropy of $\Sigma$ in hyperbolic space.  In forthcoming work  \cite{BernsteinBhattacharyaCH}, we will use the monotonicity properties of the present paper to study an analogous relationship for certain minimal submanifolds of complex hyperbolic space.

\subsection*{Acknowledgements}
The authors thank Letian Chen and Junfu Yao for their helpful comments and suggestions. The first author was partially supported by the NSF Grant DMS-1904674 and  DMS-2203132 and the Institute for Advanced Study with funding provided by the Charles Simonyi Endowment.
The second author was partially supported by the AMS-Simons Travel Grant and funding from the Simons Laufer Mathematical Sciences Institute.

\section{Notation and background}
Unless stated otherwise, throughout this paper we suppose that $(M,g)$ is a Cartan-Hadamard manifold, i.e., a complete simply connected manifold with non-positive sectional curvature.  We denote by $(\Real^{l}, g_{\Real})$ the Euclidean space and by $(\mathbb{H}^l(\kappa), g_{\mathbb{H}(\kappa)})$ the space form of constant curvature $-\kappa^2$.  When $\kappa=1$, i.e., for hyperbolic space we suppress $\kappa$ from the notation.  These are all Cartan-Hadamard manifolds.

 We will repeatedly use that there is a unique minimizing geodesic connecting any two points in a Cartan-Hadamard manifold and that the distance to any $x_0\in M$ is a smooth function on $M\setminus\set{x_0}$ and the square of this function is smooth on $M$. For any fixed point, $x_0\in M$, $\rho:M \to \Real$ denotes the distance function with respect to $g$ between the points $x$ and $x_0$
  $$\rho(x)=\rho(x; x_0)=\dist_g(x,x_0).$$
  When $x_0$ is clear from the context we write $\rho(x)$.   For a $C^1$ function $F:M\to \Real$ we write
$$
\partial_\rho F=g(\nabla_g \rho, \nabla_g F)=d\rho(\nabla_g F)=dF(\nabla_g \rho)
$$
which is continuous on $M\setminus\set{x_0}$.  If $\nabla_g F|_{x=x_0}=0$, then this extends continuously to $M$.  When $F$ is $C^2$ we also write
$$
\partial^2_\rho F= \nabla_g^2 F(\nabla_g \rho, \nabla_g \rho).
$$

By $\mathcal{B}^g_{R}(x)$, we denote a ball of radius $R$ around $x$ with respect to the metric $g$ in the manifold $M$. By $B_{R}(x)$, we denote a ball of radius $R$ around $x$ in the Euclidean space.  A submanifold $\Sigma\subset M$ has exponential volume growth if there is a constant $N>0$ so, for some $x_0\in M$,
$$
Vol_g(\mathcal{B}_R^g(x_0)\cap \Sigma)\leq N e^{NR}.
$$ 

Given vectors $\mathbf{v}, \mathbf{w}\in T_x M$ we will sometimes 
write
$$
\mathbf{v}\cdot \mathbf{w}=g_x(\mathbf{v}, \mathbf{w})
$$
and
$$
|\mathbf{v}|^2=\mathbf{v}\cdot \mathbf{v}=g_x(\mathbf{v}, \mathbf{v}).
$$
When $\mathbf{v}$ and $\mathbf{w}$ are linearly independent we denote the plane they span by
$$
\mathbf{v}\wedge \mathbf{w}=\mathrm{span}\set{\mathbf{v},\mathbf{w}}\in G_2(T_xM).
$$
We will suppress the point $x$ in the notation when it is clear from the context.

\subsection{Mean curvature flows}
Let $(M,g)$ be a general $(n+k)$-dimensional Riemannian manifold and $I\subset \Real$ an interval with non-empty interior. Consider a family $t\in I \mapsto \Sigma_t\subset M$ of smooth properly embedded $n$-dimensional submanifolds.
We say this family is a \emph{(classical) mean curvature flow (in $M$)} if there is a fixed $n$-dimensional manifold $\Sigma$ and a smooth family of maps
$$
F:I \times \Sigma\to M
$$
satisfying, $F(t, \cdot):\Sigma \to \Sigma_t\subset M$ is a proper embedding for each $t\in I$ and
\begin{equation}\label{ClassicalMCFEqn}
\left(\frac{\partial F}{\partial t} (t,x)\right)^\perp=\mathbf{H}_{\Sigma_t}(F(t,x)).
\end{equation}
Here $\left(\frac{\partial F}{\partial t} (t,x)\right)^\perp\in T_{F(t,x)}M$ is the normal component of $\frac{\partial F}{\partial t}(t,x)$.  Observe that if $\Sigma$ is closed and $I$ is compact, then
$$
F(I\times \Sigma)=\bigcup_{t\in I}\Sigma_t
$$
is a compact subset of $M$.  We remark that \eqref{ClassicalMCFEqn} is invariant under (time varying) reparameterizations.  In many cases of interest, one can apply a reparameterization to obtain a solution to
$$
\frac{\partial F}{\partial t}(t,x)=\mathbf{H}_{\Sigma_t}(F(t,x)).
$$
That is, the embeddings move normally to the hypersurfaces with velocity given by the mean curvature vector. 

We say the family $t\in I \mapsto \Sigma_t\subset M$ is a \emph{locally smooth mean curvature flow} if, for every $t_0\in I$ and $x_0\in M$, there is an $\epsilon>0$ so if $I'=I\cap \set{t: |t-t_0|<\epsilon^2}$, then  $t\in I'\mapsto \mathcal{B}_\epsilon^g(x_0)\cap \Sigma_t$ is a, possibly empty, classical mean curvature flow in $\mathcal{B}_\epsilon^g(x_0)$. 
Every classical mean curvature flow is automatically a locally smooth flow, but locally smooth flows need not be classical -- for instance the flow of Remark \ref{RicCondRem}.

\section{Fundamental solutions of the heat equation and the heat kernel}\label{HeatKernelSec}
We recall some basic properties of the fundamental solutions of the heat equation on a Riemannian manifold.  Our source is \cite{Dodziuk} and \cite{daviesHeatKernelBounds1988} -- we refer the reader also to \cite{PeterLiBook}.  Note we follow the convention of \cite{daviesHeatKernelBounds1988} and put the time variable first.
\subsection{Background and terminology}
Let $(M,g)$ be a connected Riemannian manifold, not necessarily complete.  Following Dodziuk \cite{Dodziuk},  a continuous function $p: (0,\infty)\times M\times M\to \Real $ is a \emph{fundamental solution of the heat equation} if, for every bounded continuous function $u_0$ on $M$, the function
$$
u(t,x)=\left\{\begin{array}{cc} \int_{M} p(t,x,y) u_0(y) dVol_{g}(y) &  t>0 \\ u_0(x) & t=0  \end{array} \right.
$$
is a solution of the Cauchy problem with initial data $u_0$.  That is, $u\in C^0( [0, \infty)\times M)\cap C^{1}((0, \infty)\times M)$ and for all $t>0$, $u\in C^2(M)$ and
$$
\left\{
\begin{array}{cc} \frac{du}{dt} -\Delta_{g} u=0 & \mbox{on } (0, \infty)\times M\\ 
   u(0,x)=u_0(x) & \mbox{on } M.
  \end{array}
  \right.
$$
By \cite[Theorem 2.2]{Dodziuk}, if $(M,g)$ is complete and has a uniform lower bound on its Ricci curvature, then $p$ is unique, but this need not be the case in general.  However, as observed in \cite{Dodziuk} one may always choose a unique fundamental solution by selecting the one that is minimal.   This selection, $p_M$, which we call the \emph{minimal fundamental solution of the heat equation} is characterized by
$$
p_M(t,x,y)=\sup_{D\in \mathcal{D}(M)} p_D(t,x,y)
$$
where $\mathcal{D}(M)$ is the collection of precompact connected open sets $D\subset M$ with $\partial D$ smooth and $p_D$ is the, necessarily unique, fundamental solution of the heat equation in $D$ with Dirichlet boundary conditions.  We refer to \cite[Lemma 3.2 and Theorem 3.6]{Dodziuk} for a list of properties of $p_D$ and $p_M$.

The minimal fundamental solution satisfies, for $t>0$,
$$
\int_{M} p_M(t,x,y) dVol_{g}(y) \leq 1.
$$
In general, one cannot expect equality even when $M$ is complete.
However, when $(M,g)$ is complete and has a uniform lower bound on the Ricci curvature Dodziuk shows in \cite[Theorem 4.2]{Dodziuk} that, in addition to $p_M$ being the unique fundamental solution to the heat equation, one also has conservation of total heat for all $t>0$
$$
\int_{M} p_M(t,x,y) dVol_g(y) =1, x\in M.
$$
In this case, we refer to $p_M$ as the \emph{heat kernel} of $(M,g)$  -- see \cite{Pigola,Azencott} for other conditions that ensure this property holds.

\subsection{Properties of the heat kernel of $\mathbb{H}^n$}\label{Hypspace}
We specialize now to hyperbolic space, $\mathbb{H}^n$ -- the space form of constant sectional curvature $-1$.  As $(\mathbb{H}^n, g_{\mathbb{H}})$ is complete and has constant Ricci curvature there is a unique heat kernel $p_{\mathbb{H}^n}$ in the sense above.  Following the notation of \cite{BernsteinHypEntropy} and \cite{daviesHeatKernelBounds1988}, let us write 
$$
H_n(t,x; t_0, x_0)=p_{\mathbb{H}^n}(t-t_0, x, x_0)
$$
which we think of as a function
$$
H_n(\cdot, \cdot; t_0, x_0): (t_0, \infty)\times \mathbb{H}^n\to \Real
$$
and call \emph{the heat kernel on $\mathbb{H}^n$ with singularity at $x=x_0$ and $t=t_0$}. 
Formally, this is a solution to 
$$
\left\{\begin{array}{cc} \left( \frac{\partial}{\partial t}-\Delta_{\mathbb{H}}\right)H_n=0 & t>t_0\\
	\lim_{t\downarrow t_0} H_n =\delta_{x_0}\end{array}\right.
$$
where $\delta_{x_0}$ is the Dirac delta at $x_0$.

The symmetries of $\mathbb{H}^n$ and uniqueness of the heat kernel ensure that if $H_n(t,x; t_0, x_0)$ is the heat kernel on $\mathbb{H}^n$ with singularity at $x=x_0$ and $t=t_0$, then, there is a smooth positive function $K_n:(0, \infty)\times [0, \infty)\to \Real$ such that
$$
H_n(t,x; t_0, x_0)=K_n(t-t_0, \dist_{\mathbb{H}}(x,x_0)).
$$
For instance, as $\mathbb{H}^1$ is just the Euclidean line,
$$
K_1(t,r)=(4\pi t)^{-1/2}e^{-\frac{r^2}{4t}}.
$$
Likewise,
$$
K_3(t,r)=(4\pi t)^{-3/2}\frac{r}{\sinh(r)}e^{-t-\frac{r^2}{4t}}.
$$
Note that for $t>t_0$ one has
$$
\nabla_g H_n(t,x;t_0,x_0)|_{x=x_0}=0.
$$

Properties of $K_n$ were extensively studied by Davies and Mandouvalos in \cite{daviesHeatKernelBounds1988} and we will make heavy use of their results, which, for the convenience of readers, we summarize below. We first recall two recurrence relations for $K_n$ from \cite{daviesHeatKernelBounds1988}.
The first is attributed to Millison:
\begin{equation}\label{MillisonEqn}
	K_{n+2}(t,r) =-\frac{e^{-nt}}{2\pi \sinh(r)} \partial_r K_n(t,r).
\end{equation}
The second is:
\begin{equation} \label{IntegralEqn}
	K_{n,0}(t,r)=\int_{r}^\infty \frac{e^{\frac{1}{4}\left(2n-1\right)t}K_{n+1}(t,s) \sinh(s)}{\left(\cosh(s)-\cosh(r)\right)^{\frac{1}{2}}} ds.
\end{equation}

Using these relations, Davies and Mandouvalos obtained the following uniform estimate on $K_{n+1}$ \cite[Thereom 3.1]{daviesHeatKernelBounds1988}:
\begin{equation}\label{KnDecayEstUB}
	K_{n+1}(t,r)\leq C_n t^{-\frac{1}{2}(n+1)}e^{-\frac{1}{4}n^2 t-\frac{r^2}{4t}-\frac{1}{2}nr} (1+r+t)^{\frac{1}{2}n -1} (1+r)
\end{equation}
and a corresponding lower bound
\begin{equation}\label{KnDecayEstLB}
	 C_n^{-1} t^{-\frac{1}{2}(n+1)}e^{-\frac{1}{4}n^2 t-\frac{r^2}{4t}-\frac{1}{2}nr} (1+r+t)^{\frac{1}{2}n -1} (1+r)\leq K_{n+1}(t,r)
\end{equation}
where $C_n>1$ depends only on $n$.
In particular, for any fixed $x_0\in \mathbb{H}^{n+1}$ and $R>0$, one has for $t\geq 1$
\begin{equation}\label{TimeDecayEqn}
	\sup_{x\in \mathcal B_{R}^{\mathbb{H}}(x_0)} H_{n+1}(t,x; 0,x_0)\leq C_{n,R}' t^{-\frac{3}{2}} e^{-\frac{1}{4}n^2 t}.
\end{equation}

Next, we recall the following super-convexity estimate of $K_n$ shown for $n\leq 4$ in \cite{BernsteinHypEntropy} and for all $n$ in \cite{zhangSuperconvexityHeatKernel2021}. This is equivalent to a super-convexity property of $H_n$
\begin{prop}\label{KnProp}
	For all $n\geq 1$, 
	\begin{equation}\label{KconvexityEqn}
		\partial^2_r \log K_n -\coth(r) \partial_r \log K_n \geq 0.
	\end{equation} 
Moreover, this inequality is strict for $r>0$.
\end{prop}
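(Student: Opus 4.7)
The strategy is to recast the inequality in a more tractable form and then prove it by induction. Introducing the variable $s = \cosh r$ in place of $r$, a routine chain-rule computation yields
$$\partial_r^2 \log K_n - \coth(r) \partial_r \log K_n = \sinh^2(r)\,\partial_s^2 \log K_n,$$
so \eqref{KconvexityEqn} is equivalent (for $r>0$) to the assertion that $\log K_n$, viewed at fixed $t>0$ as a function of $s=\cosh r$, is convex. Using Millison's recurrence \eqref{MillisonEqn}, which in the $s$-variable reads $K_{n+2} = -\frac{e^{-nt}}{2\pi}\partial_s K_n$, this in turn is equivalent to $\partial_r(K_{n+2}/K_n) \leq 0$, i.e.\ the ratio of heat kernels in dimensions differing by two is non-increasing in $r$.

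I would proceed by induction on $n$ in steps of two, treating $n=1,2$ as base cases. For $n=1$, the explicit formula $K_1(t,r)=(4\pi t)^{-1/2}e^{-r^2/(4t)}$ gives
$$\partial_r^2 \log K_1 - \coth(r) \partial_r \log K_1 = \frac{r\coth r - 1}{2t},$$
which is non-negative, and strictly positive for $r>0$, since $r\coth r \geq 1$ with equality only at $r=0$. For $n=2$ one can use the Davies-Mandouvalos integral representation together with \eqref{IntegralEqn} to carry out an analogous check. The induction step $n \mapsto n+2$ is the main obstacle: writing $\phi = \log K_n$, a direct calculation gives
$$\partial_s^2 \log K_{n+2} = \phi_{ss} + \frac{\phi_{sss}\phi_s - \phi_{ss}^2}{\phi_s^2},$$
so deducing $\partial_s^2 \log K_{n+2} \geq 0$ from $\phi_{ss} \geq 0$ requires information about $\phi_{sss}$ that convexity alone does not supply. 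One must therefore invoke the evolution equation
$$\partial_t K_n = (s^2-1)\partial_s^2 K_n + ns\,\partial_s K_n$$
satisfied by the hyperbolic heat kernel in the variable $s$.

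The plan for the induction step is then to propagate the inequality $V_n := \partial_s^2 \log K_n \geq 0$ forward in time using a parabolic maximum principle: derive a differential inequality of the form $\partial_t V_n \geq L V_n + N_n$ for a suitable linear operator $L$ and non-negative error term $N_n$, verify the correct sign of $V_n$ in the short-time limit $t \to 0^+$ via the Euclidean-type asymptotics $K_n \sim (4\pi t)^{-n/2}e^{-r^2/(4t)}$ (whose log is convex in $s$ by the same computation as in the $n=1$ base case), and conclude via the maximum principle. Strict positivity for $r > 0$ then follows from the strong maximum principle together with the strict inequality already verified in the short-time limit. This is essentially the scheme implemented case-by-case (for $n \leq 4$) in \cite{BernsteinHypEntropy} and uniformly in $n$ in \cite{zhangSuperconvexityHeatKernel2021}; the genuine difficulty is isolating the precise non-negative combination of $V_n$ and its derivatives that is preserved by the flow, which in particular prevents a purely pointwise inductive argument from closing.
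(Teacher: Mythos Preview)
The paper does not actually prove this proposition: it is stated with the sentence ``we recall the following super-convexity estimate of $K_n$ shown for $n\leq 4$ in \cite{BernsteinHypEntropy} and for all $n$ in \cite{zhangSuperconvexityHeatKernel2021}'' and no argument is given. So there is no proof in the paper to compare against; the paper simply cites the two references you yourself invoke at the end of your proposal.

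As for the content of your sketch: the change of variable $s=\cosh r$ and the identity $\partial_r^2\log K_n-\coth(r)\partial_r\log K_n=\sinh^2(r)\,\partial_s^2\log K_n$ are correct, as is the rewriting of Millison's relation as $K_{n+2}=-\frac{e^{-nt}}{2\pi}\partial_s K_n$ and the equivalence with monotonicity of $K_{n+2}/K_n$. Your $n=1$ base case is also correct. These reformulations are indeed the starting point in both cited references. However, your proposal is an outline rather than a proof: you yourself flag that the induction step ``requires information about $\phi_{sss}$ that convexity alone does not supply,'' and your plan to propagate $V_n=\partial_s^2\log K_n\geq 0$ via a parabolic maximum principle is stated only schematically---you do not produce the differential inequality $\partial_t V_n\geq LV_n+N_n$ or verify $N_n\geq 0$, which (as you acknowledge in your final sentence) is exactly the non-trivial part. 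The $n=2$ base case is also left as ``an analogous check'' without details. So what you have written is a correct and well-informed roadmap pointing to \cite{BernsteinHypEntropy} and \cite{zhangSuperconvexityHeatKernel2021}, which is precisely what the paper itself does, but it is not a self-contained proof.
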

%
%
%

\section{A characterization of constant curvature Cartan-Hadamard manifolds}

%

We use the functions $K_{n,\kappa}$ of \eqref{KnkappaEqn}, which are rescalings of the functions $K_n$ introduced in Section \ref{Hypspace}, in order to characterize constant curvature Cartan-Hadamard manifolds in terms of the minimal fundamental solution of the heat equation.  Observe no \emph{a priori} assumptions are made to ensure that this is the unique fundamental solution -- i.e., the heat kernel in our terminology.
\begin{thm}\label{ConstCurvCharThm}
	Fix $\kappa\geq 0$ and let $(M,g)$ be an $n$-dimensional Cartan-Hadamard manifold with $n\geq 2$ and $\mathrm{sec}_{g} \leq -\kappa^2$ and denote by $p_M$ the minimal fundamental solution of the heat equation on $M$.  For all $t>0$, one has
	$$
	0<p_M(t,x,y)\leq K_{n, \kappa} (t,d_g(x,y)).
	$$
	This inequality is strict unless $(M,g)$ has constant sectional curvature $-\kappa^2$, in which case $p_M(t,x,y)=K_{n,\kappa}(t,d_g(x,y))$ is the heat kernel of $(M,g)$.  
\end{thm}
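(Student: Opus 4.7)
The plan is to realize $u(t, x) := K_{n,\kappa}(t, \rho(x))$, where $\rho(x) := d_g(x, y_0)$ for a fixed $y_0 \in M$, as a supersolution of the heat equation and then to compare it with $p_M(\cdot, \cdot, y_0)$ via the parabolic maximum principle applied to the Dirichlet heat kernels $p_D$ on precompact domains $D \in \mathcal{D}(M)$. Rigidity will then follow from the strong minimum principle combined with the rigidity case of the Hessian comparison theorem.

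First I would compute $(\partial_t - \Delta_g)u$ on $(0, \infty) \times (M \setminus \{y_0\})$. Using $|\nabla_g \rho| = 1$ and the fact that $K_{n,\kappa}$ is the radial heat kernel of the model space $\mathbb{H}^n(\kappa)$ -- so that $\partial_t K_{n,\kappa} = \partial_r^2 K_{n,\kappa} + (n-1)\kappa \coth(\kappa r)\,\partial_r K_{n,\kappa}$ -- the chain rule gives
\begin{equation*}
(\partial_t - \Delta_g) u = \bigl[(n-1)\kappa \coth(\kappa \rho) - \Delta_g \rho\bigr]\,\partial_r K_{n,\kappa}(t, \rho),
\end{equation*}
with the obvious $\kappa = 0$ interpretation. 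The Laplacian comparison theorem under $\mathrm{sec}_g \leq -\kappa^2$ yields $\Delta_g \rho \geq (n-1)\kappa \coth(\kappa \rho)$, while $\partial_r K_{n,\kappa}(t, r) < 0$ for $r > 0$, so $u$ is a supersolution. For $D \in \mathcal{D}(M)$ containing $y_0$, the difference $v := u - p_D(\cdot, \cdot, y_0)$ is a classical supersolution on $(0, \infty) \times (D \setminus \{y_0\})$, is positive on $[0, \infty) \times \partial D$, and shares the leading Euclidean heat-kernel asymptotic at the singularity $(0, y_0)$. After mollifying the initial datum and applying the parabolic minimum principle one obtains $p_D \leq u$; taking the supremum over $D$ and using $p_M = \sup_D p_D$ gives $p_M(t, x, y_0) \leq K_{n,\kappa}(t, d_g(x, y_0))$. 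Strict positivity of $p_M$ on $(0, \infty) \times M \times M$ is a standard property of minimal fundamental solutions.

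For the rigidity, suppose $p_M(t_0, x_0, y_0) = K_{n,\kappa}(t_0, d_g(x_0, y_0))$ for some $t_0 > 0$ and $x_0 \neq y_0$. Then $v := u - p_M(\cdot, \cdot, y_0)$ is a nonnegative supersolution attaining its minimum $0$ at the interior point $(t_0, x_0)$, and the strong minimum principle forces $v \equiv 0$ on $(0, t_0] \times (M \setminus \{y_0\})$, which is connected since $n \geq 2$. The supersolution formula above now becomes an equation, and $\partial_r K_{n,\kappa} < 0$ gives $\Delta_g \rho \equiv (n-1)\kappa \coth(\kappa \rho)$ on $M \setminus \{y_0\}$. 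The Hessian comparison is actually the stronger symmetric-tensor inequality $\nabla^2_g \rho \geq \kappa \coth(\kappa \rho)(g - d\rho \otimes d\rho)$; equality of traces of a nonnegative symmetric tensor forces equality of the tensors themselves, so $\nabla^2_g \rho = \kappa \coth(\kappa \rho)(g - d\rho \otimes d\rho)$ throughout. A Jacobi field analysis along each geodesic from $y_0$ then shows that every sectional curvature of a 2-plane containing the radial direction equals $-\kappa^2$, which in polar normal coordinates around $y_0$ forces $g$ to be the space form metric $d\rho^2 + \kappa^{-2}\sinh^2(\kappa\rho)\,g_{S^{n-1}}$. Since $\exp_{y_0}$ is a diffeomorphism on the Cartan-Hadamard manifold $M$, $(M, g)$ is globally isometric to $\mathbb{H}^n(\kappa)$. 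The converse, that $p_M = K_{n,\kappa}(t, d_g)$ on $\mathbb{H}^n(\kappa)$, follows from the uniqueness of the heat kernel under the uniform Ricci lower bound available on the space form.

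The main technical obstacle I anticipate is the rigorous handling of the initial data in the comparison on $D$: one must make precise the sense in which $v := u - p_D(\cdot, \cdot, y_0) \to 0$ as $t \to 0^+$, since $u$ may carry strictly more than unit mass (reflecting volume comparison in negative curvature) while $p_D$ concentrates a genuine $\delta_{y_0}$. The standard mollification argument handles this but is the most delicate step in the proof.
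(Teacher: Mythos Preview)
Your overall strategy---show $K_{n,\kappa}(t,\rho)$ is a supersolution via Laplacian comparison, compare with the Dirichlet kernels $p_D$, pass to the supremum, then invoke the strong maximum principle for rigidity---is the same as the paper's. The two proofs diverge in exactly the place you flagged as delicate, and it is worth seeing how the paper handles it.

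Rather than applying the parabolic minimum principle directly to $v=u-p_D$ and wrestling with the singular initial data, the paper writes the difference via a Duhamel-type identity:
\[
p_D(\tau,x,y)-P_\kappa(\tau,x,y)=\lim_{\epsilon\to 0^+}\int_\epsilon^{\tau-\epsilon}\!\!\int_M \frac{d}{ds}\bigl(p_D(s,x,z)\,P_\kappa(\tau-s,z,y)\bigr)\,dVol_g(z)\,ds,
\]
where $P_\kappa(t,x,y)=K_{n,\kappa}(t,d_g(x,y))$. The boundary terms at $s=\epsilon$ and $s=\tau-\epsilon$ are controlled by the approximate-identity property of $P_\kappa$ (established separately as Proposition~\ref{ApproxIdentProp}, via Lemma~\ref{ApproxEuclidHKLem}), which replaces your mollification step. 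One then integrates by parts in $z$, using $p_D=0$ and $\partial_\nu p_D\le 0$ on $\partial D$, and the supersolution inequality $(\partial_t-\Delta_g)P_\kappa\ge 0$ gives $p_D\le P_\kappa$ directly. This sidesteps the issue you raised about $u$ possibly carrying more than unit mass.

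For the rigidity step the paper reaches the same conclusion $\nabla_g^2\rho=\ct_\kappa(\rho)(g-d\rho^2)$ but, rather than Jacobi-field analysis, rewrites this as $\nabla_g^2 f_\kappa(\rho)=(1+\kappa^2 f_\kappa(\rho))g$ for the appropriate $f_\kappa$ and invokes Brinkmann's warped-product rigidity theorem to conclude constant curvature. Your route through Jacobi fields and polar normal coordinates is equally valid and arguably more self-contained.
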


Before proving this result we will need to first prove some auxiliary results. 
In what follows, it is convenient to introduce the following notation:
$$
\mathrm{ct}_{\kappa}(r)=\left\{\begin{array}{cc} \kappa \coth(\kappa r) & \kappa, r>0\\
																	\frac{1}{r} & \kappa=0, r>0. 
																	\end{array}\right.
$$
We next recall the following sharp statement of the Hessian comparison theorem:
\begin{prop}\label{HessCompProp}
Let $(M,g)$ be a Cartan-Hadamard manifold that satisfies $\mathrm{sec}_g\leq -\kappa^2 \leq 0$.
Fixing $x_0\in M$, for any $x\neq x_0$ and $v\in T_x M$ with $v\neq 0$  one has:
$$
(\nabla^2_g \rho)_x(v,v) \geq \ct_\kappa(\rho(x)) (g_x(v,v)-(d\rho(v))^2)
$$
and this inequality is strict unless $v$ is parallel to $\nabla_g \rho(x)$ or
$$
\mathrm{sec}_g(\gamma'(t)\wedge V(t))=-\kappa^2 
$$
for all $t\in [0, \rho(x)]$, 
where $\gamma:[0, \rho(x)]\to M$ is the unique unit speed geodesic connecting $x_0$ to $x$ and $V:[0, \rho(x)]\to TM$ is the parallel transport of $v$ along $\gamma$.

A consequence of this is a reverse form of Laplacian comparison
$$
(\Delta_g \rho)(x)\geq (n-1) \ct_{\kappa} (\rho(x)),
	$$
which is a strict inequality unless, 
$$\mathrm{sec}_g(\gamma'(t)\wedge W)=-\kappa^2 
$$
for all $t\in [0, \rho(x)]$ and all $W\in T_{\gamma(t)}M$ that are orthogonal to $\gamma'(t)$.
\end{prop}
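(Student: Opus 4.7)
The plan is to use the classical Jacobi-field approach combined with a Sturm-type comparison. Fix $x\neq x_0$, let $L=\rho(x)$, and let $\gamma:[0,L]\to M$ be the unique unit-speed minimizing geodesic with $\gamma(0)=x_0$, $\gamma(L)=x$, so $\nabla_g\rho(x)=\gamma'(L)$. Since $|\nabla_g\rho|_g\equiv 1$ on $M\setminus\{x_0\}$, differentiating yields $\nabla_g^2\rho(\,\cdot\,,\nabla_g\rho)=0$, so writing $v=v^\parallel+v^\perp$ as the orthogonal decomposition along $\nabla_g\rho(x)$ gives $\nabla_g^2\rho(v,v)=\nabla_g^2\rho(v^\perp,v^\perp)$ and $|v^\perp|_g^2=|v|_g^2-(d\rho(v))^2$. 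This reduces the Hessian bound to the case $v\perp\gamma'(L)$, which I assume from now on.

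For such $v$, let $J$ be the Jacobi field along $\gamma$ with $J(0)=0$ and $J(L)=v$; it exists and is unique because Cartan--Hadamard manifolds have no conjugate points. Since $\langle J,\gamma'\rangle$ has second derivative $\langle J'',\gamma'\rangle=-\langle R(J,\gamma')\gamma',\gamma'\rangle=0$ and vanishes at both endpoints, $J\perp\gamma'$ throughout. A standard computation (e.g.\ via the second variation of arc length, using that the endpoint at $x_0$ is fixed) gives the identity $\nabla_g^2\rho(v,v)=\langle\nabla_{\gamma'(L)}J,v\rangle$. Let $f(t):=|J(t)|_g$. From $(f^2)''=2|\nabla_{\gamma'}J|_g^2-2\langle R(J,\gamma')\gamma',J\rangle$ together with $(f^2)''=2(f')^2+2ff''$, the curvature bound $\mathrm{sec}_g\leq-\kappa^2$, and the Cauchy--Schwarz inequality $|\nabla_{\gamma'}J|_g^2\geq(f')^2$, one obtains $ff''\geq\kappa^2 f^2$, hence $f''\geq\kappa^2 f$ on $(0,L]$. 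Writing $s_\kappa(t)=\sinh(\kappa t)/\kappa$ (with $s_0(t)=t$), the model function $f_0(t):=|v|_g\,s_\kappa(t)/s_\kappa(L)$ solves $f_0''=\kappa^2 f_0$ with $f_0(0)=f(0)=0$ and $f_0(L)=f(L)=|v|_g$.

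The Wronskian $W(t):=f'(t)f_0(t)-f(t)f_0'(t)$ satisfies $W(0)=0$ and $W'=f''f_0-ff_0''\geq 0$, so $W\geq 0$ and therefore $f'/f\geq f_0'/f_0$ on $(0,L]$. Evaluating at $t=L$ gives
\[
\nabla_g^2\rho(v,v)=f(L)f'(L)\geq f_0(L)f_0'(L)=\ct_\kappa(L)\,|v|_g^2,
\]
which is the desired inequality. For strictness, equality forces $W(L)=0$, and combined with $W(0)=0$ and $W'\geq 0$ this gives $W\equiv 0$ on $[0,L]$, saturating every intermediate inequality. Saturation of Cauchy--Schwarz forces $J(t)/|J(t)|$ to be parallel along $\gamma$, hence equal to the parallel transport $V(t)$ of $v/|v|$, while saturation of the curvature bound gives $\mathrm{sec}_g(\gamma'(t)\wedge V(t))=-\kappa^2$ for all $t\in[0,L]$.

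For the Laplacian consequence, summing the Hessian estimate over an orthonormal basis $e_1,\ldots,e_{n-1}$ of $\gamma'(L)^\perp\subset T_xM$ yields $\Delta_g\rho(x)\geq(n-1)\ct_\kappa(L)$. If equality holds, each summand attains its lower bound; since the identity is independent of the basis, equality in the Hessian estimate must hold for every unit $v\in\gamma'(L)^\perp$. By the Hessian strictness, this forces $\mathrm{sec}_g(\gamma'(t)\wedge V(t))=-\kappa^2$ along $\gamma$ for the parallel transport $V$ of every unit $v\in\gamma'(L)^\perp$, and since parallel transport along $\gamma$ is an isometry from $\gamma'(L)^\perp$ onto $\gamma'(t)^\perp$, this extends to $\mathrm{sec}_g(\gamma'(t)\wedge W)=-\kappa^2$ for every $W\perp\gamma'(t)$. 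The main technical step is the Jacobi--Sturm comparison through the Wronskian; the main subtlety lies in promoting diagonal equality on a single basis to the basis-independent curvature identity needed in the Laplacian equality case, which is resolved by varying the choice of orthonormal basis.
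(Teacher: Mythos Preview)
Your proof is correct and complete. The paper's own proof simply cites the Hessian comparison theorem (Schoen--Yau, \emph{Lectures on Differential Geometry}, Theorem~1.1) and remarks that strictness is visible from that proof because equality forces $V$ to be proportional to a Jacobi field; you have supplied exactly that standard Jacobi-field/Sturm--Wronskian argument in full, so the approaches coincide with yours being self-contained rather than deferred to a reference.
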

\begin{proof}
As $(M,g)$ is a Cartan-Hadamard manifold there is a unique length minimizing geodesic connecting $x_0$ to $x$ and so the inequality is the statement of the Hessian comparison theorem -- e.g., \cite[Theorem 1.1]{Schoen1994}.  The strictness of the inequality follows from the proof -- namely because equality forces $V$ to be proportional to a Jacobi field.
\end{proof}


The definition of $K_{n,\kappa}$ in \eqref{KnkappaEqn} and properties of $K_n$ ensure that for all $t>0$, 
$$
\partial_t K_{n,\kappa} (t,r)=\partial_{r}^2 K_{n,\kappa} (t, r)+(n-1) \mathrm{ct}_{\kappa}(r) \partial_r K_{n,\kappa}(t, r).
$$
Observe that for $r,t>0$, the Millison identity \eqref{MillisonEqn}  and the fact that $K_{n, \kappa}$ is positive for all $t>0$ imply
$$
\partial_r K_{n, \kappa} (t,r)= -2\pi e^{n \kappa^2 t} \kappa^{-1} \sinh (\kappa r) K_{n+2, \kappa}(t,r)<0,
$$
and so, for $\kappa, r, t>0$, 
$$
\partial_r K_{n,\kappa}(t,r) <0.
$$
When $\kappa=0$ and $r,t>0$,  one may directly compute that
$$
\partial _{r}K_{n,0} (t, r)= -\frac{r}{2t} K_{n,0}(t,r)=-2\pi \rho K_{n+2, 0}(t,r)<0.
$$

The $K_{n, \kappa}$ can be used to give the heat kernel on the Cartan-Hadamard space forms of constant curvature $-\kappa^2$.  More generally, they give supersolutions of the heat equation on Cartan-Hadamard manifolds of curvature bounded above by $-\kappa^2$.  
\begin{lem}\label{SupersolutionLem}
	Fix $\kappa\geq 0$ and let $(M,g)$ be an $n$-dimensional Cartan-Hadamard manifold with $n\geq 2$ and  $\mathrm{sec}_{g} \leq -\kappa^2$ and fix a point $x_0\in M$.  For all $t> 0$, the function
	$$
	 H_{n,\kappa}(t,x)=K_{n,\kappa}(t,\rho(x))=K_{n, \kappa}(t, \dist_g(x,x_0))
	 $$
	 is a smooth super solution of the heat equation; i.e., 
	$$
	\left( \frac{\partial}{\partial t}-\Delta_g\right) H_{n, \kappa}\geq 0.
	$$
   Moreover, this inequality is strict at some point $x\in M$ and time $t>0$ unless $(M,g)$ has constant curvature $-\kappa^2$.
\end{lem}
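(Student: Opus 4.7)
The plan is to express $(\partial_t - \Delta_g) H_{n,\kappa}$ as a single product of a radial factor of known sign and a curvature factor controlled by the Laplacian comparison theorem, so that the supersolution inequality follows from multiplying two nonpositive quantities, and the rigidity drops out by tracking the equality cases.

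On $M \setminus \{x_0\}$, the function $\rho$ is smooth with $|\nabla_g \rho| = 1$, so the chain rule gives
$$
\Delta_g H_{n,\kappa} = \partial_r^2 K_{n,\kappa}(t,\rho) + \partial_r K_{n,\kappa}(t,\rho)\, \Delta_g \rho.
$$
Combined with $\partial_t H_{n,\kappa} = \partial_t K_{n,\kappa}(t,\rho)$ and the radial heat equation
$$
\partial_t K_{n,\kappa}(t,r) = \partial_r^2 K_{n,\kappa}(t,r) + (n-1)\ct_\kappa(r)\, \partial_r K_{n,\kappa}(t,r)
$$
noted in the excerpt, this yields
$$
(\partial_t - \Delta_g) H_{n,\kappa} = \partial_r K_{n,\kappa}(t,\rho)\bigl((n-1)\ct_\kappa(\rho) - \Delta_g \rho\bigr).
$$
The first factor is strictly negative for $t,\rho > 0$ by the Millison-identity computation in the excerpt, and the second factor is nonpositive by the reverse Laplacian comparison in Proposition \ref{HessCompProp}. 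Their product is nonnegative, giving the supersolution inequality on $M \setminus \{x_0\}$. Because $K_{n,\kappa}(t,\cdot)$ extends smoothly through $r=0$ (as on the model space) and $\rho^2$ is smooth on $M$, the function $H_{n,\kappa}$ is smooth on $(0,\infty)\times M$ and the inequality extends across $x_0$ by continuity.

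For rigidity, suppose $(\partial_t - \Delta_g) H_{n,\kappa}$ vanishes identically on $(0,\infty)\times M$. Strict negativity of $\partial_r K_{n,\kappa}$ then forces $\Delta_g \rho = (n-1)\ct_\kappa(\rho)$ at every $x \in M \setminus \{x_0\}$. Expanding the Laplacian in an orthonormal frame $\{v_1 = \nabla_g \rho, v_2, \dots, v_n\}$ at $x$, Proposition \ref{HessCompProp} bounds each $\nabla^2_g \rho(v_i, v_i) \geq \ct_\kappa(\rho)$ for $i \geq 2$, and equality is forced in every summand. The strictness clause of Proposition \ref{HessCompProp} then implies that every 2-plane spanned by $\gamma'(s)$ and a perpendicular vector has sectional curvature $-\kappa^2$ for all $s \in [0,\rho(x)]$, where $\gamma$ is the minimizing geodesic from $x_0$ to $x$. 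Since $M$ is Cartan-Hadamard, $\exp_{x_0}$ is a diffeomorphism, so geodesic polar coordinates centered at $x_0$ cover all of $M$; constancy of the radial sectional curvatures from $x_0$ forces the Jacobi equation along every geodesic through $x_0$ to agree with the space-form equation, which identifies $g$ with the warped-product metric of constant curvature $-\kappa^2$.

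The main obstacle I expect is the rigidity step: promoting equality in a scalar radial PDE to constancy of the full sectional curvature tensor is delicate because only radial 2-planes based at a single point are directly constrained. The argument succeeds because the Cartan-Hadamard hypothesis provides the needed pole, after which the Jacobi-field analysis translates radial curvature constancy into full metric rigidity.
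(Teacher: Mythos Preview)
Your argument for the supersolution inequality is essentially identical to the paper's: both compute $(\partial_t-\Delta_g)H_{n,\kappa}=\partial_rK_{n,\kappa}\bigl((n-1)\ct_\kappa(\rho)-\Delta_g\rho\bigr)$ and conclude via the sign of $\partial_rK_{n,\kappa}$ together with the reverse Laplacian comparison of Proposition~\ref{HessCompProp}.

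For the rigidity step you take a slightly different route. The paper, after deducing that equality in the Laplacian comparison forces $(\nabla_g^2\rho)_x=\ct_\kappa(\rho(x))(g_x-d\rho_x^2)$, passes to the auxiliary function $f_\kappa(\rho)$ (with $f_\kappa''=1+\kappa^2 f_\kappa$) so that this becomes the conformal Hessian identity $\nabla_g^2 f_\kappa(\rho)=(1+\kappa^2 f_\kappa(\rho))g$, and then invokes Brinkmann's characterization (via \cite[Corollary~4.3.4]{Petersen3rdEd}) to conclude constant curvature $-\kappa^2$. You instead argue directly from the constancy of radial sectional curvatures via the Jacobi equation in normal coordinates at $x_0$. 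Both arguments are correct and rest on the same underlying computation; the paper's route packages the conclusion into a citable theorem, while yours is more self-contained. One small point: your decomposition into individual frame vectors $v_2,\dots,v_n$ only yields $\sec_g(\gamma'\wedge V_i)=-\kappa^2$ for the parallel transports of those specific vectors, not immediately for \emph{every} $W\perp\gamma'$; you can close this either by varying the endpoint $x$ (as you implicitly do) or, more simply, by invoking the Laplacian-comparison strictness clause of Proposition~\ref{HessCompProp} directly, which already gives the conclusion for all perpendicular $W$.
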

\begin{proof}
	One computes at $(t,x)$ that 
	\begin{align*}
	\left(\frac{\partial}{\partial t} -\Delta_{g}\right) H_{n,\kappa} &= \left(\partial_t K_{n,\kappa}(t,r) -\partial_r^2 K_{n,\kappa} (t,r)\right)|_{r=\rho}- \partial_r K_{n,\kappa} (t,r)|_{r=\rho}\Delta_g \rho\\
	&= (n-1) \ct_{\kappa}(\rho)\partial_r K_{n,\kappa}(t,\rho) - \partial_r K_{n,\kappa}(t,\rho)  \Delta_g \rho.
	\end{align*}
	Hence, as $-\partial_{r} K_{n, \kappa}\geq 0$, Proposition \ref{HessCompProp} implies
	$$ 
		\left(\frac{\partial}{\partial t} -\Delta_{g}\right) H_{n,\kappa} \geq 0
	$$
	and the inequality is strict at $x\neq x_0$ unless 
	$$
    \sec_{g}(\gamma'(t)\wedge W)=-\kappa^2
    $$
    where $\gamma:[0,L]\to M$ is the unit speed geodesic connecting $x_0$ to $x$ and $W\in T_{\gamma(t)} M$ is orthogonal to $\gamma'$.
    
    That is, the inequality is strict somewhere unless $\sec_{g}(\nabla_{g} \rho \wedge W)=-\kappa^2$ for all $x\neq x_0$ and $W\in T_x M$ orthogonal to $\nabla_{g} \rho(x)$.  In this case, one has, by Proposition \ref{HessCompProp}, that, for all $x\neq x_0$,
    $$
    (\nabla_g^2 \rho)_x=\ct_{\kappa}(\rho(x)) (g_x-d\rho^2_x).
    $$
    Let
   \begin{equation}\label{fkEqn}
    f_{\kappa}(r)= \left\{
    \begin{array}{cc} \frac{1}{\kappa^2} \left(\cosh (\kappa r) -1\right)& \kappa>0 \\
	    	 \frac{1}{2} r^2 & \kappa=0.
	     \end{array} 
    \right.
\end{equation}
    Using the chain rule, one concludes that on $M\setminus \set{x_0}$
    $$
    (\nabla^2_g f_{\kappa}(\rho))_x=(1+\kappa^2 f_{\kappa}(\rho(x)))g_x.
    $$
In addition, as $f_{\kappa}(\rho)$ is readily seen to be smooth at $x_0$, the equality extends to all of $M$.  This conformal property of the Hessian means that the metric $g$ locally has a warped product structure -- see \cite[Theorem 4.3.3]{Petersen3rdEd} where the result is attributed to Brinkmann \cite{Brinkmann}.  Moreover, the specific form of the conformal term \cite[Corollary 4.3.4]{Petersen3rdEd} shows that the metric has constant sectional curvature $-\kappa^2$ in a neighborhood of $x_0$.  It is not hard to modify this argument to see that $(M,g)$ globally has constant curvature $-\kappa^2$.
\end{proof}

We need the following short time estimate for $H_{n, \kappa}$ near the singularity. We state this result quite generally as it will be used elsewhere.
\begin{lem}\label{ApproxEuclidHKLem}
For $k\geq 0$, let $(M,g)$ be an $(n+k)$-dimensional Riemannian manifold (not necessarily Cartan-Hadamard) and let $\Sigma\subset M$ be a smooth properly embedded $n$-dimensional submanifold (i.e., an open subset of $M$ when $k=0$).  Fix a point $x_0\in M$ and an $R>0$  so $U=\mathcal{B}_{R}^g(x_0)$ is geodesically convex and precompact in $M$.   Suppose that
	$$
	i:(U, g|_{U})\to (\Real^d, g_{\Real})
	$$ 
	is an isometric embedding.	For $x\in U\cap \Sigma$ define
	 $$
	 H(t,x)=K_{n,\kappa}(t,\rho(x))=K_{n,\kappa}(t, \dist_g(x,x_0)) \mbox{ and } G(t,x)= K_{n, 0}(t, |i(x)-i(x_0)|).
	 $$
	  There is a constant $C(U,i,n,\kappa)>0$, independent of $\Sigma$, so for $0<t\leq 1$ and $f\in L^{\infty}(U\cap \Sigma)$,
    \begin{align*}
		\left| \int_{U\cap \Sigma} f(x) \left(H(t,x) -G(t,x)\right)dVol_\Sigma(x)\right| \leq C 	t^{1/3}  \Theta(t) \Vert f \Vert_{L^\infty(U\cap \Sigma)} 
\end{align*}
where
$$
\Theta(t)=Vol_g(U\cap \Sigma)+\int_{U\cap \Sigma} G(t,x)\leq C'(\Sigma,n)<\infty.
$$
\end{lem}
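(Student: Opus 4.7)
The strategy is a near-far decomposition about $x_0$ at scale $r(t) = t^{1/3}$. Write $A = (U\cap\Sigma)\cap\{\rho \leq t^{1/3}\}$, $B = (U\cap\Sigma)\setminus A$, and $d_E(x) = |i(x) - i(x_0)|$, so that $G(t,x) = K_{n,0}(t,d_E(x))$. The scale $t^{1/3}$ is chosen to balance the two sources of error in comparing $H$ with $G$: the third-order discrepancy $\rho - d_E = O(\rho^3)$ coming from the (nearly Euclidean) isometric embedding of $U$, and the deviation of $K_{n,\kappa}$ from $K_{n,0}$ at short times.

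\textbf{Near region.} Since $i$ is an isometric embedding of the precompact, geodesically convex set $U$, a unit-speed $g$-geodesic from $x_0$ to $x$ maps under $i$ to a unit-speed smooth curve in $\R^d$ whose curvature is controlled by the (uniformly bounded) second fundamental form of $i(U)\subset\R^d$. Standard chord-arc comparison yields
\[
0 \leq \rho(x)^2 - d_E(x)^2 \leq C(U,i)\,\rho(x)^4 \quad \text{on } U.
\]
On $A$ this implies $(\rho^2 - d_E^2)/(4t) = O(t^{1/3})$, hence
\[
\frac{K_{n,0}(t,\rho)}{K_{n,0}(t,d_E)} = \exp\left(-\frac{\rho^2 - d_E^2}{4t}\right) = 1 + O(t^{1/3}).
\]
Separately, the short-time heat kernel asymptotic on $\mathbb{H}^n$ (obtainable either from the explicit formulas for $K_1$ and $K_3$ together with the recurrences \eqref{MillisonEqn}-\eqref{IntegralEqn}, or from the general Riemannian parametrix expansion), combined with the rescaling $K_{n,\kappa}(t,r) = \kappa^n K_n(\kappa^2 t, \kappa r)$, yields
\[
K_{n,\kappa}(t,\rho) = K_{n,0}(t,\rho)\bigl(1 + O(t) + O(t\rho^2)\bigr), \qquad t,\,\rho \leq 1.
\]
Using $K_{n,0}(t,\rho) \leq K_{n,0}(t,d_E) = G(t,x)$, one concludes $|H(t,x) - G(t,x)| \leq C\, t^{1/3}\, G(t,x)$ on $A$. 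Integrating against $f$ contributes at most $C\,t^{1/3}\,\Vert f\Vert_{L^\infty}\int_{U\cap\Sigma}G(t,x)\,dVol_\Sigma(x)$.

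\textbf{Far region and conclusion.} On $B$ one has $\rho(x) > t^{1/3}$; by the chord-arc bound near $x_0$ together with the strict positivity and continuity of $d_E$ on the precompact set $\bar U \setminus \{x_0\}$, one also obtains $d_E(x) \geq c_0\, t^{1/3}$ for all $t$ small. The upper bound \eqref{KnDecayEstUB} (rescaled by $\kappa$) and the explicit Gaussian for $K_{n,0}$ then give
\[
H(t,x),\ G(t,x) \leq C\,t^{-N}\exp(-c/t^{1/3}) \leq C' \,t^{1/3}
\]
on $B$ for $0 < t \leq 1$. The far-region contribution is therefore $\leq C\,t^{1/3}\,\Vert f\Vert_{L^\infty}\,Vol_g(U\cap\Sigma)$, and adding both parts yields the claimed bound with $\Theta(t) = Vol_g(U\cap\Sigma) + \int_{U\cap\Sigma}G(t,x)\,dVol_\Sigma(x)$. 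The uniform bound $\Theta(t) \leq C'(\Sigma,n)$ for $t\leq 1$ follows by covering $U\cap\Sigma$ by finitely many patches graphical over $n$-planes in $\R^d$ and using that a Euclidean Gaussian has unit mass on any such $n$-plane. The principal technical obstacle is establishing the uniform short-time expansion $K_{n,\kappa}(t,\rho) = K_{n,0}(t,\rho)(1 + O(t) + O(t\rho^2))$, since the Davies-Mandouvalos bounds \eqref{KnDecayEstUB}-\eqref{KnDecayEstLB} only pinch $K_n/K_{n,0}$ between two multiplicative constants $C_n^{\pm 1}$; refining to the true asymptotic in the $t,\rho \to 0$ regime is the place where the classical heat kernel expansion (or a dimension-by-dimension argument via \eqref{MillisonEqn} and \eqref{IntegralEqn}) must be invoked.
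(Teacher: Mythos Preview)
Your approach is essentially the same as the paper's: a near--far split at scale $t^{1/3}$, a chord--arc estimate comparing $\rho$ with $d_E$, a short-time comparison of $K_{n,\kappa}$ with $K_{n,0}$, and rapid Gaussian decay on the far region. The paper phrases the chord--arc bound as $\rho - d_E \le \beta\, d_E^3$ and, on the far region, first bounds $|H-G|\le (1+C_3)G$ before using decay of $G$; these are cosmetic variations of what you wrote.

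There is one slip worth correcting. Your displayed short-time expansion
\[
K_{n,\kappa}(t,\rho) = K_{n,0}(t,\rho)\bigl(1 + O(t) + O(t\rho^2)\bigr), \qquad t,\rho \le 1,
\]
is false as stated: take $\rho$ fixed and positive and send $t\to 0$; the ratio tends to $(\rho/\sinh(\kappa\rho))^{(n-1)/2}\cdot \kappa^{(n-1)/2}\neq 1$, not to $1$. The correct bound (this is what the paper records, citing \cite[(3.2)--(3.3)]{daviesHeatKernelBounds1988} for odd $n$ and \eqref{IntegralEqn} for even $n$) is
\[
|K_{n,\kappa}(t,r) - K_{n,0}(t,r)| \le C_1\,(t+r^2)\,K_{n,0}(t,r),
\]
i.e.\ the second error term is $O(\rho^2)$, not $O(t\rho^2)$. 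This does not damage your argument: on the near region $A=\{\rho\le t^{1/3}\}$ one has $t+\rho^2 \le t + t^{2/3} = O(t^{2/3})$, and combined with your $K_{n,0}(t,\rho)/K_{n,0}(t,d_E)=1+O(t^{1/3})$ step you still obtain $|H-G|\le C\,t^{1/3}G$ on $A$. With that correction, your proof matches the paper's.
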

\begin{proof} 
	We first claim that there exists $C_1=C_1(n ,\kappa, R)$ so, for $0<t\leq 1$ and $0\leq r \leq R$, 
	$$
	|K_{n, \kappa}(t,r)-K_{n,0}(t,r)|\leq  C_1 (t+r^2)K_{n,0}(t,r).
	$$
	This follows from \cite[Equations (3.2) and (3.3)]{daviesHeatKernelBounds1988} for odd $n$. The expansion for even $n$ follows from \eqref{IntegralEqn} above. Hence, for $0<t<\min\set{1, R^{3}}$ and $0\leq r < t^{1/3}$ we have, up to increasing $C_1$,
	$$
		|K_{n, \kappa}(t,r)-K_{n,0}(t,r)|\leq C_1 t^{2/3}K_{n,0}(t,r).
	$$
	The geometry of $i(U)$ ensures that there exists $\beta=\beta(i,R)>1$ so 
	$$
	0\leq \rho(x)-|i(x)-i(x_0)| =d_g(x,x_0)-|i(x)-i(x_0)| \leq \beta|i(x)-i(x_0)|^3.
	$$
	Hence, for $x\in \mathcal{B}^g_{t^{1/3}}(x_0)$, and $0<t\leq 1$
	\begin{align*}
	|K_{n,0}(t,\rho(x))-K_{n,0}(t, |i(x)-i(x_0)|)|&\leq \left(e^{\frac{1}{2}\beta t^{1/3}}-1\right) K_{n,0}(t, |i(x)-i(x_0)|)\\
	&\leq C_2 t^{1/3} K_{n,0}(t, |i(x)-i(x_0)|) 
	\end{align*}
where $C_2$ depends only on $\beta$.  That is, for $x\in \mathcal{B}^g_{t^{1/3}}(x_0)$, the two inequalities imply
$$
|H(t,x)-G(t,x)| \leq C_1  t^{2/3}K_{n,0}(t,\rho(x))+ C_2 t^{1/3} G(t,x).
$$
This in turn yields for $x\in \mathcal{B}^g_{t^{1/3}}(x_0)$, 
$$
|H(t,x)-G(t,x)|\leq (C_1 t^{2/3}+C_2 t^{1/3}+C_1C_2 t) G(t,x)\leq C_2't ^{1/3} G(t,x).
$$
In the above, we used $t\leq 1$. Hence,
\begin{equation}
	\begin{aligned}
 \Big|\int_{\mathcal{B}^g_{t^{1/3}}(x_0)\cap \Sigma} & f(x) \left(H(t,x)-G(t,x)\right) dVol_\Sigma(x)  \Big| \nonumber\\
& \leq C_2't^{1/3} \Vert f \Vert_{L^\infty(U\cap \Sigma)} \int_{\mathcal{B}^g_{t^{1/3}}(x_0)\cap \Sigma} G(t,x
 ) dVol_{ \Sigma} (x) \\
& \leq C_2' t^{1/3} \Theta(t) \Vert f \Vert_{L^\infty(U\cap \Sigma)}. 
 \end{aligned}
\end{equation}

\medskip
Next, we observe that by \eqref{KnDecayEstUB} for $ 0\leq r\leq R$ there is a constant, $C_3=C_3(n,\kappa,R)$ so
$$
K_{n,\kappa}(t,r)\leq C_3K_{n,0}(t,r).
$$
Moreover, because $|i(x)-i(x_0)|\leq \dist_g(x,x_0)$ and $\partial_r K_{n,0}\leq 0$, 
$$
K_{n,0}(t,\dist_g(x,x_0))\leq G(t,x).
$$
Hence, on $U$ we have
$$
|H(t,x)-G(t,x)|\leq H(t,x)+G(t, x)\leq (1+C_3) G(t,x),
$$
which implies there is a $C_4$ so
\begin{align*}
\int_{\Sigma \cap (\mathcal{B}_R^g(x_0) \setminus \mathcal{B}^g_{t^{1/3}}(x_0))} |H(t,x)-G(t,x)|&\leq C_4 t^{-\frac{n}{2}}e^{-\frac{1}{t^{1/3}}} Vol_g(\mathcal{B}_R^g(x_0)\cap \Sigma  )\\
&\leq C_4' t^{1/3} \Theta(t).
\end{align*}
Combining the estimates on $\mathcal{B}^g_{t^{1/3}}(x_0)$ and on $\mathcal{B}_R^g(x_0) \setminus \mathcal{B}^g_{t^{1/3}}(x_0)$ we obtain the first bound for an appropriate choice of $C$.

Finally, that $\Theta(t)$ is bounded independent of $t$ is a straightforward consequence of $i(\Sigma)$ being a properly embedded submanifold -- see \cite[Theorem 9.1]{WhiteBoundaryMCF}. 
\end{proof}

We will also need the fact that $K_{n,\kappa}$ can be used at small times to give an approximation of the identity.
\begin{prop}\label{ApproxIdentProp}
	For $k\geq 0$, let $(M,g)$ be an $(n+k)$-dimensional complete Riemannian manifold (not necessarily Cartan-Hadamard) and let $\Sigma\subset M$ be a smooth properly embedded $n$-dimensional submanifold.  If $f\in C^0_c(\Sigma)$ is a compactly supported function and $x_0\in \Sigma$, then
	$$
	\lim_{t\to 0^+} \int_{\Sigma} f(x) H(t,x) dVol_\Sigma(x)=f(x_0)
	$$
	where
	 $$
	H(t,x)=K_{n,\kappa}(t, \dist_g(x,x_0)).
	$$
	If, in addition, $\Sigma$ is complete, properly embedded, and has exponential volume growth, then  the same holds when $f\in C^0(\Sigma)\cap L^\infty(\Sigma)$. 
\end{prop}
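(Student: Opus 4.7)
The plan is to reduce everything to a standard Gaussian approximation of the identity in Euclidean space via Lemma~\ref{ApproxEuclidHKLem}, handling contributions away from $x_0$ separately. For the first assertion (with $f \in C^0_c(\Sigma)$), I would fix $R > 0$ small enough that $U = \mathcal{B}_R^g(x_0)$ is geodesically convex, precompact, and admits an isometric embedding $i: (U, g|_U) \to (\R^d, g_{\R})$, which exists for sufficiently small $R$. Then split
$$
\int_\Sigma f H\, dVol_\Sigma = \int_{U \cap \Sigma} f H\, dVol_\Sigma + \int_{\spt(f) \setminus U} f H\, dVol_\Sigma.
$$
The second integral is bounded by $\|f\|_\infty K_{n,\kappa}(t, R) \cdot \mathrm{Vol}_g(\spt f)$, which tends to $0$ as $t \to 0^+$: $K_{n,\kappa}(t, R) \to 0$ for fixed $R$ by the rescaled form of \eqref{KnDecayEstUB}, and $\spt f$ is compact in $\Sigma$, hence of finite volume.

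For the first integral, Lemma~\ref{ApproxEuclidHKLem} yields
$$
\int_{U \cap \Sigma} f H\, dVol_\Sigma = \int_{U \cap \Sigma} f G\, dVol_\Sigma + O(t^{1/3})
$$
as $t \to 0^+$, where $G(t,x) = K_{n,0}(t, |i(x) - i(x_0)|)$. Parametrizing $\Sigma$ near $x_0$ in tangent-plane coordinates $y \in T_{x_0}\Sigma \cong \R^n$, one has $|i(x(y)) - i(x_0)|^2 = |y|^2 + O(|y|^4)$ and $dVol_\Sigma = (1 + O(|y|^2))\, dy$, so the remaining limit reduces to the classical Gaussian approximation of identity on $\R^n$: the Gaussian mass concentrates near $y = 0$, continuity of $f$ at $x_0$ produces $f(x_0) + o(1)$, and the normalization $\int_{\R^n}(4\pi t)^{-n/2} e^{-|y|^2/(4t)}\, dy = 1$ gives the correct constant.

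For the second assertion, I would fix small $R > 0$ and a continuous cutoff $\chi: \Sigma \to [0,1]$ equal to $1$ on $\Sigma \cap \mathcal{B}_{R/2}^g(x_0)$ and $0$ outside $\Sigma \cap \mathcal{B}_R^g(x_0)$. Proper embeddedness in the complete manifold $M$ makes $\Sigma \cap \overline{\mathcal{B}_R^g(x_0)}$ compact, so $\chi f \in C^0_c(\Sigma)$, and the first part gives $\int_\Sigma (\chi f) H\, dVol_\Sigma \to f(x_0)$. It then suffices to bound $\int_\Sigma (1-\chi) f H\, dVol_\Sigma$ by $\|f\|_\infty \int_{\Sigma \setminus \mathcal{B}_{R/2}^g(x_0)} H\, dVol_\Sigma$ and show this vanishes. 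Decomposing into unit annuli $\mathcal{A}_k = \Sigma \cap (\mathcal{B}_{R/2+k+1}^g(x_0) \setminus \mathcal{B}_{R/2+k}^g(x_0))$ for $k \geq 0$, exponential volume growth gives $\mathrm{Vol}_g(\mathcal{A}_k) \leq N e^{N(R/2+k+1)}$, while on $\mathcal{A}_k$, $H \leq K_{n,\kappa}(t, R/2+k) \leq C(n,\kappa)\, t^{-n/2}(1 + R/2 + k)^{n/2} e^{-(R/2+k)^2/(4t)}$ by \eqref{KnDecayEstUB} after rescaling. Writing $r^2/(4t) = r^2/(8t) + r^2/(8t)$ and using the second half to absorb $e^{Nr}$ for $t$ below a threshold depending on $N$ leaves a summable series whose total is bounded by $C t^{-n/2} e^{-R^2/(32t)} \to 0$.

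The main obstacle is the tail estimate in the second part: one must verify that the Gaussian factor $e^{-r^2/(4t)}$ in $H$ uniformly dominates the exponential volume growth $e^{Nr}$ across all annuli at distance $\geq R/2$ for $t$ below some $t_0 = t_0(R, N, n, \kappa)$. The other pieces — reducing the near-$x_0$ contribution to a Euclidean Gaussian via Lemma~\ref{ApproxEuclidHKLem} and applying the classical approximation of identity on a smooth submanifold in tangent-plane coordinates — are essentially routine.
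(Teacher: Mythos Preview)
Your proposal is correct and follows essentially the same route as the paper: localize near $x_0$, use Lemma~\ref{ApproxEuclidHKLem} to replace $H$ by the Euclidean Gaussian $G$, reduce to the classical approximation-of-the-identity on a submanifold of $\R^d$, and for the bounded case control the tail via \eqref{KnDecayEstUB} and exponential volume growth. Your version is simply more explicit in the places where the paper is terse (the tangent-plane computation for $G$ and the annular decomposition for the tail).
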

\begin{proof}
	Pick $r>0$ sufficiently small so $U=\mathcal{B}^g_r(x_0)$ is geodesically convex and precompact.  Let $i: (U,g)\to (\Real^d, g_{\Real})$ be some choice of isometric embedding.  It follows from \eqref{KnDecayEstUB} that $H(t,x)\to 0$ uniformly as $t\to 0$ in $M\setminus \mathcal{B}_{r/2}^g(x_0)$. Let $\psi$ be a smooth bump function supported in $\mathcal{B}_{r/2}^g(x_0)$ with $\psi=1$ in a neighborhood of $x_0$. Since $f$ is compactly supported, one has
	$$
	 \lim_{t\to 0^+} \int_{\Sigma} H(t,x) f(x) dVol_\Sigma(x)=\lim_{t\to 0^+} \int_{\Sigma} H(t,x) \psi (x) f(x) dVol_\Sigma(x)
	 $$
	 provided both limits exist.  As such we may assume $f$ has compact support in $U$.
	 By Lemma \ref{ApproxEuclidHKLem} this reduces to a question on submanifolds of Euclidean space where the existence and value of the above limit can be readily established.
	 
	 It remains to treat the case where $f$ is bounded and $\Sigma$ has exponential volume growth.  In this case, the exponential volume growth and \eqref{KnDecayEstUB} imply
	 $$
	 \lim_{t\to 0^+}\int_{\Sigma}  H(t,x)(1-\psi(x)) f(x) dVol_\Sigma(x)=0
	 $$
	 and the result follows from the compactly supported case.
\end{proof}

We are now ready to prove Theorem \ref{ConstCurvCharThm}.
\begin{proof}[Proof of Theorem \ref{ConstCurvCharThm}]
  Set $P_{\kappa}(t,x,y)= K_{n, \kappa} (t,d_g(x,y))$.  Proposition \ref{ApproxIdentProp} implies that,  for any $\phi\in C^0_{c}(M)$, one has
  $$
  \phi(x)=\lim_{\tau\to 0^+} \int_{M} P_{\kappa}(\tau,x,y) \phi(y) dVol_{g}(y).
  $$
  Fix a domain $D\in \mathcal{D}(M)$ and let $ p_{D}(t,x,y) $ be the Dirichlet heat kernel extended by $0$ outside $D\times D$.  The definition of heat kernel likewise ensures that for $x\in D$ and $\psi\in C^0(M)\cap L^\infty(M)$, one has
  $$
  \psi(x) =\lim_{\tau\to 0^+} \int_{M} p_{D}(\tau,x,y) \psi(y) dVol_{g}(y).
  $$
  Hence, for any $\tau>0$ and $x,y\in D$, 
  $$
  p_{D}(\tau,x,y)-P_{\kappa}(\tau,x,y)=\lim_{\epsilon\to 0^+} \int_{M} \int_{\epsilon}^{\tau-\epsilon} \frac{d}{ds}\left (p_{D}(s,x,z) P_{\kappa}(\tau-s,z,y) ds \right)dVol_{g}(z).
  $$
  Hence, by Fubini's theorem and the fact that $p_D$ is a fundamental solution in $D$ one has,  for $x,y\in D$ and $\tau>0$, that 
  \begin{align*}
  &	p_{D}(\tau,x,y)-P_{\kappa}(\tau,x,y)\\
  &=\lim_{\epsilon\to 0^+} \int_{\epsilon}^{\tau-\epsilon} \int_{M} \bigg[ P_{\kappa}(\tau-s,z,y)\partial_t p_D (s,x,z) 
    - p_D(s,x,z) \partial_t P_{\kappa} (\tau-s,z,y) \bigg]dVol_{g}(z) ds\\
    & = \lim_{\epsilon\to 0^+} \int_{\epsilon}^{\tau-\epsilon}  \int_{\bar{D}}  \bigg[P_{\kappa}(\tau-s,z,y) 
    \Delta_g^z p_D (s,x,z) 
    - p_D(x,z,s) \partial_t P_{\kappa} (\tau-s,z,y)\bigg] dVol_{g}(z) ds.\\
      \end{align*}
      Integrating by parts and using that $p_D(\tau,x,y)\equiv 0$ for $y\in \partial D$ while $\partial_{\nu} p_D (\tau,x,y)\leq 0$ for $y \in \partial D$ and $\nu$ the outward normal to $D$ yields, for $\tau>0$,
      \begin{align*}
p_{D}(\tau,x,y)-P_{\kappa}(\tau,x,y)&\leq \lim_{\epsilon\to 0^+} \int_{\epsilon}^{\tau-\epsilon} \int_{M} -(\partial_t -\Delta_{g}^z) P_{\kappa}(\tau-s,z,y) dVol_{g}(z)\\     
	&\leq 0 
	\end{align*}
where the second inequality follows from Lemma \ref{SupersolutionLem}.  As $P_{\kappa}(\tau,x,y)>0$ for $\tau>0$, it follows that
$$
p_{D}(\tau,x,y)\leq P_{\kappa}(\tau,x,y)
$$
for all $x,y\in M$ and $\tau>0$.

As $D\in \mathcal{D}(M)$ was arbitrary, we conclude that,
$$
p_{M}(t,x,y)\leq P_{\kappa}(t,x,y)
$$
for all $t=\tau>0$ and $x,y\in M$. 

For fixed $y_0\in M$, set
$$
u(t,x)= P_{\kappa}(t,x,y_0)-p_{M}(t,x,y_0)\geq 0.
$$
By Lemma \ref{SupersolutionLem}, $u$ is a supersolution of the heat equation, i.e., 
$$
(\partial_t -\Delta_{g}) u \geq 0.
$$
Moreover,  the inequality is strict at some point unless $(M,g)$ has constant sectional curvature $-\kappa^2$.
In particular, as $M$ is connected, by the strong maximum principle, either $u(t, x)>0$ for all $x\in M$ and $t>0$ or $u(t, x)\equiv 0$.  In the latter case, $u=P_{\kappa}$ is a solution of the heat equation and so $(M,g)$ has constant sectional curvature $-\kappa^2$.  If this occurs, then the Ricci curvature is bounded from below and there is a unique fundamental solution of the heat equation.  Moreover,  $p_M$ and $P_{\kappa}$ are both fundamental solutions and so $p_{M}=P_{\kappa}$ is the heat kernel of $(M,g)$.
\end{proof}	
We record the following corollary which could also be proved by elementary means.
\begin{cor}\label{KnkappaIneqCor}
	 For $n\geq 2$, $ K_{n, \kappa_0}(t,r)<K_{n, \kappa}(t,r)$ for $0\leq \kappa<\kappa_0 $ and all $r\geq 0, t>0$.
\end{cor}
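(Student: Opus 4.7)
The plan is to deduce this purely from Theorem \ref{ConstCurvCharThm} applied twice to the same Cartan-Hadamard manifold, exploiting that the theorem both identifies $p_M$ explicitly when $(M,g)$ has constant curvature $-\kappa^2$ and gives a strict upper bound when it does not.

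Specifically, I would take $(M,g)=(\mathbb{H}^n(\kappa_0), g_{\mathbb{H}(\kappa_0)})$, the $n$-dimensional space form of constant sectional curvature $-\kappa_0^2$. Since $0\leq \kappa<\kappa_0$, we have $-\kappa_0^2<-\kappa^2$, so in particular $\sec_g\equiv -\kappa_0^2\leq -\kappa^2$, and the hypotheses of Theorem \ref{ConstCurvCharThm} are satisfied with both parameters $\kappa_0$ and $\kappa$. Applying the theorem with the parameter $\kappa_0$ (in which case $(M,g)$ \emph{does} have constant curvature $-\kappa_0^2$) identifies the minimal fundamental solution as
$$
p_M(t,x,y)=K_{n,\kappa_0}(t,d_g(x,y)).
$$
Applying the same theorem with the parameter $\kappa$ gives the upper bound $p_M(t,x,y)\leq K_{n,\kappa}(t,d_g(x,y))$. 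Because $\kappa\neq \kappa_0$ the space $(M,g)$ does \emph{not} have constant curvature $-\kappa^2$, so the ``unless'' clause of Theorem \ref{ConstCurvCharThm} is not triggered and the inequality is strict. Combining the two gives
$$
K_{n,\kappa_0}(t,d_g(x,y)) < K_{n,\kappa}(t,d_g(x,y)) \quad \text{for all } x,y\in M,\ t>0.
$$

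To finish, I would note that for any prescribed $r\geq 0$ we can choose $x,y\in \mathbb{H}^n(\kappa_0)$ with $d_g(x,y)=r$ (the space form is geodesically complete, so distances take every non-negative value), and hence $K_{n,\kappa_0}(t,r)<K_{n,\kappa}(t,r)$ for every $r\geq 0$ and $t>0$, as required.

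There is no real obstacle here: the only subtle point to verify is that the strictness clause in Theorem \ref{ConstCurvCharThm} actually applies, which is immediate since the ambient manifold $\mathbb{H}^n(\kappa_0)$ has constant curvature $-\kappa_0^2\neq -\kappa^2$. The hypothesis $n\geq 2$ in the corollary is inherited directly from the hypothesis of Theorem \ref{ConstCurvCharThm}.
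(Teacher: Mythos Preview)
Your proposal is correct and follows essentially the same approach as the paper: apply Theorem \ref{ConstCurvCharThm} on the space form $\mathbb{H}^n(\kappa_0)$, using the equality case with parameter $\kappa_0$ to identify $p_M$ and the strict inequality case with parameter $\kappa$ to obtain the bound, then observe that the distance function is surjective onto $[0,\infty)$. The only cosmetic difference is that the paper invokes the uniqueness of the heat kernel (via the Ricci lower bound) to identify $p_M$ directly rather than via the equality clause of the theorem, but this is immaterial.
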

\begin{rem}
	It is immediate from the definition that $K_{1, \kappa}=K_{1, 0}$ for all $\kappa\geq 0$.
\end{rem}
\begin{proof}
Observe that the space form $(\mathbb{H}^n(\kappa_0), g_{\mathbb{H}(\kappa_0)})$ with constant curvature $-\kappa^2_0$ is a Cartan-Hadamard manifold with sectional curvature bounded above by $-{\kappa}^2$.  Moreover,  as $((\mathbb{H}^n(\kappa_0),g_{\mathbb{H}(\kappa_0)})$ is complete and has constant Ricci curvature, the space admits a unique heat kernel.  In particular, for all $t>0$ and $x,y\in M$
$$
p_{\mathbb{H}^n(\kappa_0)}(t,x,y)=K_{n, \kappa_0}(t,d_{\mathbb{H}(\kappa_0)}(x,y)).
$$
Hence,  Theorem \ref{ConstCurvCharThm} implies that,  for $t>0$ and all $x,y\in \mathbb{H}^n(\kappa_0)$,
$$
K_{n, {\kappa}_0}(t,d_{\mathbb{H}(\kappa_0)}(x,y))=p_{\mathbb{H}^n(\kappa_0)}(t,x,y)<K_{n, {\kappa}}(t,d_{\mathbb{H}(\kappa_0)}(x,y)).
$$
Clearly, $d_{\mathbb{H}(\kappa_0)}(x,y)$ achieves every value in $[0, \infty)$ and the result follows.
\end{proof}

\section{Huisken Monotonicity in Cartan-Hadamard manifolds}
Let $(M,g)$ be an $(n+k)$-dimensional Cartan-Hadamard manifold with $\mathrm{sec}_g\leq -\kappa^2 \leq 0$.  We establish a form of Huisken monotonicity for well-behaved mean curvature flows in $M$. To that end, set
$$
\Phi_{n, \kappa}^{t_0,x_0}(t,x)=K_{n,\kappa}(t_0- t,  \dist_{g}(x,x_0))=K_{n,\kappa}(t_0- t,  \rho(x)).
$$
When $k=0$, this is the backwards heat kernel on $(\mathbb{H}^n(\kappa), g_{\mathbb{H}(\kappa)})$, the space form of constant curvature $-\kappa^2$.
Observe that, for $x\neq x_0$, 
\begin{align*}
	\partial_\rho  \Phi_{n,\kappa}^{t_0, x_0}(t,x)= \partial_r K_{n, \kappa} (t_0-t,\rho(x))=\partial_\rho K_{n, \kappa}\leq 0,
\end{align*}
where the second equality is a slight abuse of notation.  Likewise,
\begin{align*}
	\partial_\rho^2 \Phi_{n,\kappa}^{t_0, x_0}(t,x)= \partial_r^2  K_{n, \kappa} (t_0-t,\rho(x))=\partial_\rho^2 K_{n, \kappa}.
\end{align*}
Computing as in Lemma \ref{SupersolutionLem} and using the above notion we obtain
\begin{align*}
\left(\frac{\partial}{\partial t} +\Delta_{g}\right) \Phi_{n,\kappa}^{t_0,x_0} &=-\partial_t K_{n,\kappa} +\partial_\rho^2 K_{n,\kappa}+ \partial_\rho K_{n,\kappa} \Delta_g \rho\\
 &= -(n-1) \ct_{\kappa}(\rho)\partial_\rho K_{n,\kappa}+ \partial_\rho K_{n,\kappa} \Delta_g \rho.
\end{align*}
We may rewrite the second equality as
\begin{align}\label{BackwardHeatPhink}
	\left(\frac{\partial}{\partial t} +\Delta_{g}\right) \Phi_{n,\kappa}^{t_0,x_0}=\left(\Delta_g \rho-(n-1) \ct_{\kappa} (\rho)\right) \partial_\rho \Phi_{n,\kappa}^{t_0, x_0}.
\end{align}
While we do not use it in what follows,  we remark that the estimates of Proposition \ref{HessCompProp} imply that 
\begin{align*}
	\left(\frac{\partial}{\partial t} +\Delta_{g}\right)\Phi_{n,\kappa}^{t_0,x_0}\leq  k ct_{\kappa}(\rho) \partial_\rho \Phi_{n,\kappa}^{t_0,x_0}\leq 0.
\end{align*}
Moreover, the first inequality is strict at some point unless $(M,g)$ is $(\mathbb{H}^{n+k}(\kappa),  g_{\mathbb{H}(\kappa)})$ and the 
 second inequality is strict  unless $k=0$.

We compute the change of the integral of the kernel $\Phi_{n,\kappa}^{t_0,x_0}$ along a well-behaved mean curvature flow when paired with a compactly supported test function. 
\begin{prop}\label{MonotonicityFormulaProp}
	Suppose $t\in [T_1,T_2)\mapsto \Sigma_t$ is a locally smooth mean curvature flow in $M$ of smooth properly embedded $n$-dimensional submanifolds. For any $t_0\in (T_1,T_2]$, $x_0\in M$, $t\in [T_1,t_0)$ and time varying test function $\psi\in C^{\infty}_c([T_1,T_2]\times M)$ one has
	\begin{align*}
	\frac{d}{dt} \int_{\Sigma_t} & \psi \Phi_{n,\kappa}^{t_0,x_0} dVol_{\Sigma_t}=\int_{\Sigma_t} \left( \frac{d}{dt}\psi -\Delta_{\Sigma_t}\psi \right) \Phi_{n,\kappa}^{t_0,x_0} dVol_{\Sigma_t}\nonumber\\
	 &- \int_{\Sigma_t} \psi \left( \left| \frac{\nabla^\perp_{\Sigma_t} \Phi_{n,\kappa}^{t_0,x_0}}{\Phi_{n,\kappa}^{t_0,x_0}} -\mathbf{H}_{\Sigma_t}\right|^2 + Q^{t_0,x_0}_{n,\kappa}( t, x,N_x \Sigma_t) \right) \Phi_{n,\kappa}^{t_0,x_0} dVol_{\Sigma_t}.\label{prop5.1}
	\end{align*}
	Here,
	$$
	Q^{t_0,x_0}_{n,\kappa}( t,x, E) =\sum_{i=1}^{k}\nabla^2_{g}\log \Phi_{n,\kappa}^{t_0, x_0} (E_i, E_i) + \left((n-1) \ct_{\kappa}(\rho) -\Delta_g \rho\right)\partial_\rho \log \Phi_{n,\kappa}^{t_0, x_0} 
	$$
  where $\rho(x)=\dist_g(x,x_0)$, $E$ is a $k$-dimensional subspace of $T_xM$ and $\set{E_1, \ldots, E_{k}}$ is an orthonormal basis of $E$.
\end{prop}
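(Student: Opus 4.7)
The plan is to adapt Huisken's classical monotonicity calculation, using the identity \eqref{BackwardHeatPhink} for $\Phi := \Phi_{n,\kappa}^{t_0,x_0}$ in place of the Euclidean fact that the backwards heat kernel literally solves the backwards heat equation. Fix $t \in [T_1, t_0)$; since $t_0 - t > 0$, the kernel $\Phi(t, \cdot)$ is smooth on $M$, and by reparametrizing the flow in a neighborhood of $\spt \psi(t, \cdot)$ we may assume $\partial_t F = \mathbf{H}_{\Sigma_t}$. The first variation formula then yields
\begin{equation*}
\frac{d}{dt}\int_{\Sigma_t} \psi \Phi\, dVol_{\Sigma_t} = \int_{\Sigma_t}\bigl[\partial_t(\psi\Phi) + \mathbf{H}\cdot\nabla_g(\psi\Phi) - \psi\Phi\, |\mathbf{H}|^2\bigr]\, dVol_{\Sigma_t},
\end{equation*}
using that $\Div_{\Sigma_t}(\mathbf{H}) = -|\mathbf{H}|^2$. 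The product rule combines $\Phi\,\partial_t\psi + \Phi\,\mathbf{H}\cdot\nabla_g\psi = \Phi\,\frac{d}{dt}\psi$, interpreted as the material derivative, so the remaining task is to massage the terms from $\psi\,\partial_t\Phi$ and $\psi\,\mathbf{H}\cdot\nabla_g\Phi$ into the claimed form.

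Identity \eqref{BackwardHeatPhink} gives $\partial_t\Phi = -\Delta_g\Phi + (\Delta_g\rho - (n-1)\ct_\kappa(\rho))\partial_\rho\Phi$. Decomposing the ambient Laplacian along $\Sigma_t$ through $\Delta_g\Phi = \Delta_{\Sigma_t}\Phi + \sum_{j=1}^{k}\nabla^2_g\Phi(E_j, E_j) - \mathbf{H}\cdot\nabla_g\Phi$ (for $\{E_j\}$ a local orthonormal normal frame) and integrating by parts to shift $\Delta_{\Sigma_t}$ from $\Phi$ onto $\psi$ (valid because $\psi(t,\cdot)$ has compact support in $M$, hence on $\Sigma_t$), the integrand reduces to
\begin{equation*}
\bigl(\tfrac{d}{dt}\psi - \Delta_{\Sigma_t}\psi\bigr)\Phi - \psi\sum_{j}\nabla^2_g\Phi(E_j,E_j) + 2\psi\,\mathbf{H}\cdot\nabla_g\Phi + \psi\bigl(\Delta_g\rho - (n-1)\ct_\kappa(\rho)\bigr)\partial_\rho\Phi - \psi\Phi\,|\mathbf{H}|^2.
\end{equation*}

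The final step converts $\Phi$-derivatives into $\log\Phi$-derivatives through the pointwise identity $\nabla^2_g\Phi = \Phi\,\nabla^2_g\log\Phi + \Phi^{-1}\,\nabla_g\Phi\otimes\nabla_g\Phi$. Evaluating on the normal frame and observing $E_j\cdot\nabla_g\Phi = E_j\cdot\nabla^\perp_{\Sigma_t}\Phi$ yields $\sum_j \nabla^2_g\Phi(E_j,E_j) = \Phi\sum_j\nabla^2_g\log\Phi(E_j,E_j) + |\nabla^\perp_{\Sigma_t}\Phi|^2/\Phi$, while $\partial_\rho\Phi = \Phi\,\partial_\rho\log\Phi$. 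Substituting, the quadratic piece $-\psi|\nabla^\perp_{\Sigma_t}\Phi|^2/\Phi + 2\psi\,\mathbf{H}\cdot\nabla^\perp_{\Sigma_t}\Phi - \psi|\mathbf{H}|^2\Phi$ assembles into $-\psi\Phi\,\bigl|\nabla^\perp_{\Sigma_t}\Phi/\Phi - \mathbf{H}_{\Sigma_t}\bigr|^2$, and the remaining log-Hessian together with the radial correction form precisely $-\psi\Phi\,Q^{t_0,x_0}_{n,\kappa}(t,x,N_x\Sigma_t)$. The computation is essentially bookkeeping; the main point requiring care is tracking signs through the Laplacian splitting and the integration by parts, and verifying that the normal contraction kills the tangential part of $\nabla_g\Phi$ so that the quadratic piece closes into a perfect square.
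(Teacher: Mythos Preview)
Your proposal is correct and follows essentially the same route as the paper: first variation, the Laplacian splitting $\Delta_g\Phi=\Delta_{\Sigma_t}\Phi+\sum_j\nabla^2_g\Phi(E_j,E_j)-\mathbf{H}\cdot\nabla_g\Phi$, the identity \eqref{BackwardHeatPhink}, integration by parts, and the conversion $\nabla^2_g\Phi=\Phi\nabla^2_g\log\Phi+\Phi^{-1}d\Phi\otimes d\Phi$ to close the perfect square. The only organizational difference is that the paper packages the intermediate step as a formula for $(\partial_t+\Delta_{\Sigma_t})\Phi$ before substituting, whereas you substitute directly into the integrand; the content is the same.
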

\begin{proof}
	Recall, that for any $f\in C^1([T_1,T_2)\times M)$ and $x\in \Sigma_t$ we define
	$$
	\frac{d}{d t} f(t,x)=\frac{\partial}{\partial t} f (t,x)+\mathbf{H}_{\Sigma_t} \cdot \nabla_{g} f(t,x).
	$$
As in \cite[Chapter 3]{Ecker2001},  first variation formula yields, 
	\begin{align*}
		\frac{d}{dt} \int_{\Sigma_t} &\psi  \Phi_{n,\kappa}^{t_0, x_0} dVol_{\Sigma_t}= \int_{\Sigma_t} \bigg[\frac{d}{d t} \bigg( \psi \Phi_{n,\kappa}^{t_0,x_0}\bigg) - \psi  \Phi_{n,\kappa}^{t_0, x_0}|\mathbf{H}_{\Sigma_t}|^2\bigg] dVol_{\Sigma_t}\\		
		&=\int_{\Sigma_t} \bigg[\Phi_{n,\kappa}^{t_0, x_0}\frac{d}{d t}\psi+ \psi\frac{d}{d t} \Phi_{n,\kappa}^{t_0, x_0} -\psi|\mathbf{H}_{\Sigma_t}|^2 \Phi_{n,\kappa}^{t_0, x_0}\bigg]  dVol_{\Sigma_t}\\
	&	=\int_{\Sigma_t}\bigg[
		\Phi_{n,\kappa}^{t_0, x_0}\bigg(\frac{d}{d t}-\Delta_{{\Sigma_t}}\bigg)\psi+\psi\bigg(\frac{d}{d t}+\Delta_{{\Sigma_t}}\bigg)\Phi_{n,\kappa}^{t_0, x_0}
	 -\psi|\mathbf{H}_{\Sigma_t}|^2 \Phi_{n,\kappa}^{t_0, x_0}\bigg]  dVol_{\Sigma_t}
	\end{align*}
	where the last equality follows from integrating by parts and we used throughout that $\Sigma_t$ is properly embedded and $\psi(t,\cdot)$ has compact support. 
	
	Standard geometric computations imply that on $\Sigma_t$,
	$$
	\Delta_{g} \Phi_{n,\kappa}^{t_0, x_0} =  \Delta_{{\Sigma_t}}\Phi_{n,\kappa}^{t_0, x_0} -\mathbf{H}_{\Sigma_t}\cdot \nabla_{g}  \Phi_{n,\kappa}^{t_0, x_0}+\sum_{i=1}^{k}\nabla^2_{g }\Phi_{n,\kappa}^{t_0, x_0} (E_i, E_i)
	$$
	where $\set{E_1, \ldots, E_{k}}$ is an orthonormal basis of $N_x\Sigma_t\subset T_{x}M$.  
	Hence,
	\begin{align*}
		\left(\frac{\partial}{\partial t} +\Delta_{\Sigma_t}\right)\Phi_{n,\kappa}^{t_0, x_0} & =	\left(\frac{\partial}{\partial t} +\Delta_{g}\right)\Phi_{n,\kappa}^{t_0, x_0}+\mathbf{H}_{\Sigma_t}\cdot \nabla_{g}  \Phi_{n,\kappa}^{t_0, x_0}
		-\sum_{i=1}^{k}\nabla^2_{g }\Phi_{n,\kappa}^{t_0, x_0} (E_i, E_i).
	\end{align*}
	It follows from \eqref{BackwardHeatPhink} that
	\begin{equation}\label{BackwardSubmanifoldEq} \begin{aligned}
		\left(\frac{\partial}{\partial t} +\Delta_{\Sigma_t} \right)&\Phi_{n,\kappa}^{t_0, x_0} 	= \left(\Delta_g \rho-(n-1) \ct_{\kappa} (\rho)\right) \partial_\rho \Phi_{n,\kappa}^{t_0, x_0} \\
		&+\mathbf{H}_{\Sigma_t}\cdot \nabla_{{g}}  \Phi_{n,\kappa}^{t_0, x_0} -\sum_{i=1}^{k}\nabla^2_{g }\Phi_{n, \kappa}^{t_0, x_0} (E_i, E_i)\\
		&=\mathbf{H}_{\Sigma_t}\cdot \nabla_{g}  \Phi_{n,\kappa}^{t_0, x_0} -Q_{n,\kappa}^{t_0,x_0}(t,x,N_x\Sigma_t) \Phi_{n,\kappa}^{t_0, x_0}	-\frac{|\nabla_{\Sigma_t}^\perp \Phi_{n,\kappa}^{t_0,x_0}|^2}{\Phi_{n,\kappa}^{t_0, x_0}},
	\end{aligned}
\end{equation}
	where the second equality uses
	$$
	\nabla_{\Sigma_t}^2 \log \Phi_{n,\kappa}^{t_0,x_0}=\frac{\nabla_{\Sigma_t}^2\Phi_{n,\kappa}^{t_0,x_0}}{\Phi_{n,\kappa}^{t_0,x_0}} -\frac{d\Phi_{n,\kappa}^{t_0,x_0} \otimes d\Phi_{n,\kappa}^{t_0,x_0}}{(\Phi_{n,\kappa}^{t_0,x_0})^2}.
	$$
	Hence, by putting everything together, one has
	\begin{align*}
		\frac{d}{dt} \int_{\Sigma_t}& \psi  \Phi_{n,\kappa}^{t_0,x_0} dVol_{\Sigma_t}
		=\int_{\Sigma_t}\bigg[ \frac{d}{dt} \psi-\Delta_{\Sigma_t}\psi \bigg] \Phi_{n,\kappa}^{t_0,x_0} dVol_{\Sigma_t}
		\\
		&+\int_{\Sigma_t}\psi  \bigg[ \left(\frac{\partial}{\partial t} +\Delta_{\Sigma_t} \right)\Phi_{n,\kappa}^{t_0, x_0}+\nabla_{g} \Phi_{n,\kappa}^{t_0, x_0} \cdot \mathbf{H}_{\Sigma_t}-|\mathbf{H}_{\Sigma_t}|^2\Phi_{n,\kappa}^{t_0, x_0} \bigg] dVol_{\Sigma_t}\\
		&=\int_{\Sigma_t}\bigg[ \frac{d}{dt} \psi-\Delta_{\Sigma_t}\psi \bigg] \Phi_{n,\kappa}^{t_0,x_0} dVol_{\Sigma_t}\\
		&- \int_{\Sigma_t}\psi \left( \left|\frac{\nabla_{\Sigma_t}^\perp \Phi_{n,\kappa}^{t_0,x_0}}{\Phi_{n,\kappa}^{t_0,x_0}} -\mathbf{H}_{\Sigma_t} \right|^2 +Q_{n,\kappa}^{t_0,x_0}(t,x,N_x\Sigma_t) \right) \Phi_{n,\kappa}^{t_0,x_0}dVol_{\Sigma_t}.
	\end{align*}
	This completes the proof.
\end{proof}

We use the super-convexity estimate of Proposition \ref{KnProp} and the Hessian comparison of Proposition \ref{HessCompProp} to see that $Q_{n,\kappa}^{t_0,x_0}$ has a sign.
\begin{lem}\label{QPosLem}
Fix $x_0\in M$ and $t_0\in \Real$ and choose $x\in M\setminus \set{x_0}$, $t<t_0$ and a $k$-dimensional subspace $E\subset T_x M$.  The inequality
$$Q_{n,\kappa}^{t_0,x_0}(t,x,E)   \geq 0
$$
holds, where
$$
Q_{n,\kappa}^{t_0,x_0}(t,x,E) =\sum_{i=1}^{k}\nabla^2_{g}\log \Phi_{n,\kappa}^{t_0, x_0} (E_i, E_i) +\left((n-1) \ct_\kappa( \rho) -\Delta_g \rho\right)\partial_\rho \log \Phi_{n,\kappa}^{t_0, x_0},$$
 $\rho=\dist_g(\cdot,x_0)$ and $E_1, \ldots, E_{n+k}$ is an orthonormal basis of $T_x M$ with $E=\mathrm{span}\set{E_1 \ldots, E_k}$.  Moreover, the inequality is strict unless both:
\begin{enumerate}
	\item 
	\label{StrictCase1} $\sec_{g}(E_i(t)\wedge \gamma'(t))=-\kappa^2$, where $\gamma:[0, \rho(x)]\to M$ is the unique minimizing geodesic connecting $x_0$ and $x$,  $E_i(t)$ is the parallel transport of $E_i$ along $\gamma$ and $k+1\leq i \leq n+k$ is such that $\gamma'(\rho(x))=\nabla_g \rho(x)$ is not parallel to $E_i$;
	\item \label{StrictCase2} Either $\kappa=0$ or $g(E_i, \nabla_g \rho(x))=0$, for all $1\leq i \leq k$.
\end{enumerate}	
\end{lem}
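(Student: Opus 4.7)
The plan is to compute $Q_{n,\kappa}^{t_0,x_0}$ pointwise and isolate contributions of directions tangent and transverse to $\nabla_g\rho$. Since $\log \Phi_{n,\kappa}^{t_0,x_0}$ depends on $x$ only through $\rho$, the chain rule gives
$$
\nabla_g^2 \log \Phi_{n,\kappa}^{t_0,x_0} = \partial_r^2\log K_{n,\kappa}(t_0-t,\rho)\, d\rho\otimes d\rho + \partial_r \log K_{n,\kappa}(t_0-t,\rho)\,\nabla_g^2\rho.
$$
Extend $E_1,\dots,E_k$ to an orthonormal basis $E_1,\dots,E_{n+k}$ of $T_xM$ and set $p=\sum_{i=1}^k (d\rho(E_i))^2$ and $q=\sum_{i=1}^k \nabla_g^2\rho(E_i,E_i)$. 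In this notation,
$$
Q_{n,\kappa}^{t_0,x_0} = p\,\partial_r^2 \log K_{n,\kappa} + q\,\partial_r\log K_{n,\kappa} + \bigl((n-1)\ct_\kappa(\rho)-\Delta_g\rho\bigr)\partial_r\log K_{n,\kappa}.
$$

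The key step is to control the last two terms using the Hessian comparison, Proposition \ref{HessCompProp}, applied to the complementary basis vectors $E_{k+1},\dots,E_{n+k}$. Combined with the identities $\sum_{i=1}^{n+k}(d\rho(E_i))^2=|\nabla_g\rho|^2=1$ and $\sum_{i=1}^{n+k}\nabla_g^2\rho(E_i,E_i)=\Delta_g\rho$, summing the comparison gives $\Delta_g\rho-q\geq \ct_\kappa(\rho)(n-1+p)$, or equivalently $(n-1)\ct_\kappa(\rho)-\Delta_g\rho\leq -q-p\,\ct_\kappa(\rho)$. Since $K_{n,\kappa}$ is strictly decreasing in $r$, one has $\partial_r\log K_{n,\kappa}\leq 0$; multiplying the rearranged inequality through reverses the sign and, after substitution, cancels the $q$ terms to leave
$$
Q_{n,\kappa}^{t_0,x_0}\geq p\bigl(\partial_r^2\log K_{n,\kappa}-\ct_\kappa(\rho)\,\partial_r\log K_{n,\kappa}\bigr).
$$
The bracket is non-negative: for $\kappa>0$ this is Proposition \ref{KnProp} transported through the rescaling $K_{n,\kappa}(t,r)=\kappa^n K_n(\kappa^2 t,\kappa r)$, while for $\kappa=0$ a direct computation with $K_{n,0}(t,r)=(4\pi t)^{-n/2}e^{-r^2/4t}$ shows that the bracket vanishes identically. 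Since $p\geq 0$, this gives $Q_{n,\kappa}^{t_0,x_0}\geq 0$.

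For the equality statement, I trace back which inequalities can be tight. Because $\partial_r\log K_{n,\kappa}<0$ at $\rho>0$, equality in the Hessian comparison step requires $\nabla_g^2\rho(E_i,E_i)=\ct_\kappa(\rho)(1-(d\rho(E_i))^2)$ for every $i\geq k+1$; by the sharp form of Proposition \ref{HessCompProp}, this is automatic when $E_i\parallel\nabla_g\rho$ and otherwise is precisely condition \ref{StrictCase1}. Equality in the final bound $p\cdot[\text{bracket}]\geq 0$ splits into two cases: either $p=0$, meaning every $E_i\perp\nabla_g\rho$ (the second alternative in \ref{StrictCase2}), or the bracket vanishes, which by the strict form of Proposition \ref{KnProp} forces $\kappa=0$ (the first alternative in \ref{StrictCase2}). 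The only real subtlety is keeping the bookkeeping straight across the two layers of comparison; once Propositions \ref{HessCompProp} and \ref{KnProp} are in hand, the argument reduces to a direct pointwise algebraic manipulation.
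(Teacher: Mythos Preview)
Your proof is correct and follows essentially the same route as the paper: expand $\nabla_g^2\log\Phi$ via the chain rule, apply the Hessian comparison of Proposition~\ref{HessCompProp} to the complementary directions $E_{k+1},\dots,E_{n+k}$ (using $\partial_r\log K_{n,\kappa}\leq 0$), simplify with $|\nabla_g\rho|^2=1$, and finish with the super-convexity of Proposition~\ref{KnProp}. The only cosmetic difference is that you package the intermediate quantities as $p$ and $q$, whereas the paper cancels the $q$-term at the outset by rewriting $q\,\partial_r\log K_{n,\kappa}-\Delta_g\rho\,\partial_r\log K_{n,\kappa}=-\sum_{i>k}\nabla_g^2\rho(E_i,E_i)\,\partial_r\log K_{n,\kappa}$ before applying comparison; the equality analysis is likewise identical.
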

\begin{proof}
	We first compute that, for $\kappa>0$, 
	\begin{align*}
		\partial_r^2 \log K_{n,\kappa} -\ct_{\kappa}(r) \partial_r \log K_{n, \kappa} &=
		\kappa^{2+n} \left(  \partial^2_{r} \log K_n (t, \kappa r )-\coth(\kappa r) \partial_r \log K_n(t,\kappa r)\right).
	\end{align*}
	Hence, by Proposition \ref{KnProp}, one has, for $\kappa>0$,
	$$
	\partial_r^2 \log K_{n,\kappa}(t,r) -\ct_{\kappa}(r) \partial_r \log K_{n, \kappa}(t,r) \geq 0
	$$
	with strict inequality when $r>0$.  When $\kappa=0$ and $r>0$ one directly computes,
	$$
	\partial_r^2 \log K_{n,0} -\ct_{0} (r) \partial_r \log K_{n,0}= \partial_r^2(-\log r) -\frac{1}{r} \partial_r( -\log r)=0.
	$$
	As such, for all $\kappa\geq 0$, 
	$$
	\partial_r^2 \log K_{n,\kappa} -\ct_{\kappa}(r) \partial_r \log K_{n, \kappa} \geq 0
	$$
	with equality only when $r=0$ or $\kappa=0$.

Abusing notation as above, we readily compute,
\begin{align*}
	\nabla^2_{g}\log \Phi_{n,\kappa}^{t_0, x_0}&=(\partial^2_\rho \log K_{n,\kappa}) d\rho^2 +(\partial_{\rho} \log K_{n, \kappa})\nabla_{g}^2 \rho.
	\end{align*}
	Hence, continuing to abuse notation,
	\begin{align*}
		 Q_{n,\kappa}^{t_0,x_0}(t,x,E) &= \sum_{i=1}^k (\partial^2_\rho \log K_{n, \kappa})  (d\rho(E_i))^2 + (n-1)( \partial_\rho\log  K_{n,\kappa}) \ct_{\kappa}(\rho)\\ 
		 &-( \partial_\rho\log  K_{n,\kappa}) \sum_{i=k+1}^{n+k} (\nabla^2_{g} \rho) (E_i,E_i) \\
		 &\geq  \sum_{i=1}^k (\partial^2_\rho \log K_{n, \kappa})  (d\rho(E_i))^2 + (n-1)( \partial_\rho\log  K_{n,\kappa}) \ct_{\kappa}(\rho)\\
		 &-( \partial_\rho\log  K_{n,\kappa})\sum_{i=k+1}^{n+k} \ct_{\kappa}(\rho)(g(E_i,E_i)-(d\rho(E_i))^2)
	\end{align*}
	where the inequality used the fact that $-\partial_\rho\log  K_{n,\kappa}\geq 0$ and Proposition \ref{HessCompProp}.  Moreover, as $-\partial_\rho\log  K_{n,\kappa}>0$ for $x\neq x_0$, Proposition \ref{HessCompProp} further implies that  this inequality is strict unless case \eqref{StrictCase1} holds.
	As
	$$
	1=|\nabla_g \rho|^2=\sum_{i=1}^{n+k} (d\rho(E_i))^2,$$
	we obtain
\begin{align*}
	Q_{n,\kappa}^{t_0,x_0}(t,x,E) &\geq (\partial^2_\rho \log K_{n, \kappa})  \sum_{i=1}^k (d\rho(E_i))^2 -\ct_{\kappa}(\rho)\partial_\rho\log  K_{n,\kappa} \\
	&+\ct_{\kappa}(\rho) \partial_\rho\log  K_{n,\kappa}\sum_{i=k+1}^{n+k}(d\rho(E_i))^2\\
	&=(\partial^2_\rho \log K_{n, \kappa}- \ct_{\kappa}(\rho) \partial_\rho\log  K_{n,\kappa}) \sum_{i=1}^k (d\rho(E_i))^2\geq 0.
\end{align*}
Here the final inequality follows from the convexity property of $K_{n,\kappa}$.  One has equality in the final inequality only when $\kappa=0$ or when $\kappa>0$ and
$$
\sum_{i=1}^k (d\rho(E_i))^2=0.
$$
That is, only when $\kappa=0$ or when $g(E_i, \nabla_g \rho)=0$ for all $1\leq i \leq k$.
\end{proof}
We will need the following characterization of geodesic cones in Cartan-Hadamard manifolds.
\begin{lem}\label{ConeLem}
	Let $\Sigma \subset M$ be a complete and proper $n$-dimensional submanifold and  $x_0\in M$.  If $\rho=\dist_{g}(\cdot,x_0)$ and $|\nabla_\Sigma^\perp \rho|=0$ on $ \Sigma\setminus \set{x_0}$, then $\Sigma$ is a geodesic cone over $x_0$.
\end{lem}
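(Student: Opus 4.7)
The plan is to exploit the fact that the condition $|\nabla_\Sigma^\perp \rho|=0$ on $\Sigma\setminus\{x_0\}$ means the ambient gradient $\nabla_g\rho$, which is smooth and of unit length on $M\setminus\{x_0\}$, is everywhere tangent to $\Sigma$. Thus $V:=\nabla_g\rho|_{\Sigma\setminus\{x_0\}}$ is a smooth unit tangent vector field on $\Sigma\setminus\{x_0\}$. The key structural fact I will use is that the integral curves of $\nabla_g\rho$ on $M\setminus\{x_0\}$ are precisely the unit-speed outward geodesic rays from $x_0$ (parametrized by arc length): by the Gauss lemma, for each unit $v\in T_{x_0}M$ the geodesic $\gamma_v(s)=\exp_{x_0}(sv)$ satisfies $\gamma_v'(s)=\nabla_g\rho(\gamma_v(s))$ for all $s>0$, and in the Cartan-Hadamard setting $\gamma_v$ is globally the unique minimizing geodesic to each of its points.

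Given $x\in\Sigma\setminus\{x_0\}$, I would let $\gamma:[0,\infty)\to M$ be the unique unit-speed geodesic with $\gamma(0)=x_0$ and $\gamma(\rho(x))=x$, and consider the set
\[
I=\{s\in(0,\infty):\gamma(s)\in\Sigma\}.
\]
The goal reduces to showing $I=(0,\infty)$. Since $\rho(x)\in I$, it suffices to prove $I$ is simultaneously closed and open in the connected interval $(0,\infty)$. Closedness is immediate: $\Sigma$ is properly embedded, hence closed in $M$, so $I=\gamma^{-1}(\Sigma)\cap(0,\infty)$ is closed. For openness, pick $s^*\in I$; then $\gamma(s^*)\in\Sigma\setminus\{x_0\}$ and, by the Gauss-lemma identification above, $\gamma'(s^*)=\nabla_g\rho(\gamma(s^*))=V(\gamma(s^*))$. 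Hence on a neighborhood of $s^*$, $\gamma$ solves the same ODE on $\Sigma\setminus\{x_0\}$ as the local integral curve of the smooth vector field $V$, and uniqueness of solutions forces the two to coincide near $s^*$; in particular $\gamma(s)\in\Sigma$ for $s$ in an open neighborhood of $s^*$.

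Combining these steps, $I=(0,\infty)$, so the entire open geodesic ray from $x_0$ through $x$ (excluding the vertex) is contained in $\Sigma$. Since this holds for every $x\in\Sigma\setminus\{x_0\}$, $\Sigma$ is a union of geodesic rays emanating from $x_0$, i.e., a geodesic cone over $x_0$. I do not anticipate a serious technical obstacle; the only subtle points are (i) correctly citing the Gauss-lemma identity $\gamma_v'=\nabla_g\rho\circ\gamma_v$ so as to link integral curves of $V$ to geodesics from $x_0$, and (ii) arguing openness of $I$ cleanly via uniqueness of ODEs on the submanifold $\Sigma\setminus\{x_0\}$, avoiding any issue at $s=0$ where $\nabla_g\rho$ is undefined.
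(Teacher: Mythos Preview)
Your argument is correct and takes a genuinely different route from the paper. The paper first shows $x_0\in\Sigma$, then for arbitrary $x\in\Sigma$ takes an \emph{intrinsic} minimizing geodesic $\gamma$ in $\Sigma$ from $x_0$ to $x$ (this is where completeness of $\Sigma$ enters), applies the Hessian comparison of Proposition~\ref{HessCompProp} to $f_\kappa(\rho)$ to obtain the differential inequality $g''\ge 1+\kappa^2 g$ for $g(s)=f_\kappa(\rho(\gamma(s)))$, and then uses ODE comparison to conclude $\dist_g(x,x_0)\ge\dist_\Sigma(x,x_0)$, forcing the intrinsic geodesic to coincide with the ambient one. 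Your approach instead starts from the ambient radial geodesic and runs the open--closed argument using only the tangency of $\nabla_g\rho$ to $\Sigma$; it is more elementary, avoiding both Hessian comparison and the completeness hypothesis on $\Sigma$. One small phrasing correction: in the openness step you should invoke ODE uniqueness in the ambient $M\setminus\{x_0\}$ rather than ``on $\Sigma\setminus\{x_0\}$'', since $\gamma$ is not yet known to lie in $\Sigma$; the point is that the integral curve of $V$ through $\gamma(s^*)$, regarded as a curve in $M$, also solves $\eta'=\nabla_g\rho(\eta)$ and hence agrees with $\gamma$ by uniqueness on $M$.
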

\begin{proof}
	First observe that, $\Sigma$ is connected and $x_0\in \Sigma$.  Indeed, if $\Sigma$ was not connected or $x_0\not\in \Sigma$, then there would be a component $\Sigma'$ that does not contain $x_0$.  As $\Sigma$ is proper, this implies the existence of a point $x'\in \Sigma'$ at which $\rho$ achieves its (non-zero) minimum.  As this is a regular point of $\rho$, one must have $\nabla_{\Sigma}\rho=0$ at this point and so $|\nabla^\perp_\Sigma \rho|(x')=1$, a contradiction.

Let $f_\kappa(r)$ be given by \eqref{fkEqn}.  We have
$$
\nabla^2_g f_{\kappa}(\rho)=f'_{\kappa}(\rho) \nabla^2_g \rho +(1+\kappa^2 f_{\kappa}(\rho)) d\rho^2. 
$$
Using $f_{\kappa}'\geq 0$ and the lower bound on $\nabla_g^2\rho$ given by Proposition \ref{HessCompProp} yields
$$
\nabla^2_g f_{\kappa}(\rho)\geq (1+\kappa^2 f_{\kappa}(\rho)) g.
$$
Hence, the hypothesis that $\nabla_\Sigma^\perp \rho=0$ ensures $\nabla_\Sigma^\perp f_{\kappa} (\rho)=0$ and
	$$
	\nabla_{\Sigma}^2 f_{\kappa}(\rho)\geq (1+\kappa^2 f_{\kappa}(\rho)) g_{\Sigma}.
	$$
	
	As $\Sigma$ is connected, complete and contains $x_0$, for each $x\in \Sigma\setminus \set{x_0}$, there is a $\gamma:[0,L]\to \Sigma$ which is a minimizing geodesic in $\Sigma$ parameterized by arclength that connects $x_0$ to $x$. That is, $\gamma$ satisfies $\gamma(0)=x_0$ and $\gamma'(s)$ is of unit length. 
	One computes, 
	$$
	\frac{d^2}{ds^2} f_{\kappa}(\rho(\gamma(s)))=\nabla^2_{\Sigma} f_{\kappa}(\rho) (\gamma'(s), \gamma'(s))\geq 1+\kappa^2 f_{\kappa}(\rho(\gamma(s))).
	$$
	Set
	$$
	g(s)=f_{\kappa}(\rho(\gamma(s)))
	$$
	so we have $g(0)=f_{\kappa}(0)=0$ and $g'(0)=f_{\kappa}'(0)=0$ and 
	$$
	g''(s)\geq 1+\kappa^2 g(s).
	$$
As $f_{\kappa}''=1+\kappa^2 f_{\kappa}$, standard ODE comparison implies that
	$$
	f_{\kappa}(\rho(\gamma(s)))=g(s)\geq f_{\kappa}(s).
	$$
	As $f_{\kappa}$ is strictly monotone on the positive reals it follows that, $\rho(\gamma(s))\geq s$.  This means 
\begin{equation}\label{EqualGeo}
	\dist_g(x,x_0)=\rho(x)=\rho(\gamma(L))\geq L= \dist_{\Sigma}(x,x_0).
\end{equation}
	This is only possible if $\gamma$ is also a minimizing geodesic in $(M,g)$ and we have equality throughout \eqref{EqualGeo}.  In particular, $\gamma$ is the unique unit speed geodesic connecting $x_0$ to $x$ and lies entirely within $\Sigma$.   As $x$ is arbitrary, $\Sigma$ must be a cone over $x_0$.
\end{proof}

\begin{thm}\label{HuiskenMonThm}
	Suppose $t\in [T_1,T_2)\mapsto \Sigma_t$ is a locally smooth mean curvature flow in $M$ of properly embedded $n$-dimensional submanifolds with exponential volume growth.   For any $t_0\in (T_1,T_2]$, $x_0\in M$, and $T_1\leq t_1\leq t_2<t_0$ if  $Ric_g \geq -\Lambda^2 g$ on $\Sigma_t$, for all $t\in [t_1,t_2]$ and a uniform $\Lambda\geq 0$, then
	\begin{align*}
\int_{\Sigma_{t_2}} \Phi_{n,\kappa}^{t_0,x_0} (t, x)dVol_{\Sigma_{t_2}} (x)  &\leq \int_{\Sigma_{t_1}} \Phi_{n,\kappa}^{t_0,x_0} (t, x)dVol_{\Sigma_{t_1}} (x).
	\end{align*}
	When $\kappa>0$, the inequality is strict unless $\Sigma_{t}$ is a geodesic cone over $x_0$ that is a totally geodesic submanifold isometric to $\mathbb{H}^{n}({\kappa})$.
\end{thm}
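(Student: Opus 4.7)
My plan is to apply the pointwise monotonicity identity of Proposition \ref{MonotonicityFormulaProp} with a spatial cutoff, discard the manifestly non-positive contributions using Lemma \ref{QPosLem}, and then send the cutoff to infinity. Concretely, fix $t_0$ and $x_0$ and choose $\eta_R\in C^\infty([0,\infty),[0,1])$ equal to $1$ on $[0,R]$, vanishing on $[2R,\infty)$, with $|\eta_R'|\leq C/R$ and $|\eta_R''|\leq C/R^2$; set $\psi_R(x)=\eta_R(\rho(x;x_0))$. Since $\psi_R\equiv 1$ near $x_0$, $\psi_R$ is smooth and compactly supported on $M$, so Proposition \ref{MonotonicityFormulaProp} applies with $\psi=\psi_R$.

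The crux is estimating the error term
\begin{equation*}
\int_{\Sigma_t}\left(\tfrac{d}{dt}\psi_R - \Delta_{\Sigma_t}\psi_R\right)\Phi_{n,\kappa}^{t_0,x_0}\,dVol_{\Sigma_t}.
\end{equation*}
Using $\frac{d}{dt}\psi_R = \mathbf{H}_{\Sigma_t}\cdot\nabla_g\psi_R = \eta_R'(\rho)\,\mathbf{H}_{\Sigma_t}\cdot\nabla_g\rho$ together with $\Delta_{\Sigma_t}\rho = \sum_{i=1}^n \nabla^2_g\rho(e_i,e_i) + \mathbf{H}_{\Sigma_t}\cdot\nabla_g\rho$ for an orthonormal frame $\{e_i\}$ of $T\Sigma_t$, the $\mathbf{H}_{\Sigma_t}$ contributions cancel and the error reduces to
\begin{equation*}
-\int_{\Sigma_t}\left(\eta_R''(\rho)|\nabla_{\Sigma_t}\rho|^2 + \eta_R'(\rho)\sum_{i=1}^n\nabla^2_g\rho(e_i,e_i)\right)\Phi_{n,\kappa}^{t_0,x_0}\,dVol_{\Sigma_t}.
\end{equation*}
This miraculous cancellation is what obviates any a priori control of $|\mathbf{H}_{\Sigma_t}|$. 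By Proposition \ref{HessCompProp}, $\nabla^2_g\rho$ is positive semidefinite on $(\nabla_g\rho)^\perp$, so $\sum_{i=1}^n\nabla^2_g\rho(e_i,e_i)\leq\Delta_g\rho$; the hypothesis $Ric_g\geq-\Lambda^2 g$ on $\Sigma_t$ and standard Laplacian comparison yield $\Delta_g\rho\leq C(n,k,\Lambda)$ uniformly on $\{R\leq\rho\leq 2R\}$ for $R$ large. Combined with the uniform Gaussian decay $K_{n,\kappa}(t_0-t,r)\leq C e^{-r^2/(4(t_0-t_1))}$ times a polynomial factor (obtained from \eqref{KnDecayEstUB} after rescaling) and the exponential volume-growth hypothesis $Vol_g(\mathcal{B}_R^g(x_0)\cap\Sigma_t)\leq Ne^{NR}$, the error is bounded by $(C/R)\,e^{-R^2/(4(t_0-t_1))+2NR+O(\log R)}\to 0$ uniformly in $t\in[t_1,t_2]$ as $R\to\infty$. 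The same decay renders $\int_{\Sigma_t}\Phi_{n,\kappa}^{t_0,x_0}\,dVol_{\Sigma_t}$ absolutely convergent and equal to the $R\to\infty$ limit of the cut-off integrals. Integrating the identity in time from $t_1$ to $t_2$, dropping the non-positive $-\psi_R(|\nabla^\perp_{\Sigma_t}\Phi/\Phi-\mathbf{H}_{\Sigma_t}|^2+Q_{n,\kappa}^{t_0,x_0})\Phi$ term (non-positivity of $Q$ from Lemma \ref{QPosLem}), and passing to the limit yields the claimed inequality.

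For the rigidity statement when $\kappa>0$: equality forces both $|\nabla^\perp_{\Sigma_t}\Phi_{n,\kappa}^{t_0,x_0}/\Phi_{n,\kappa}^{t_0,x_0}-\mathbf{H}_{\Sigma_t}|^2$ and $Q_{n,\kappa}^{t_0,x_0}$ to vanish identically on $\Sigma_t$ for all $t\in[t_1,t_2]$. With $\kappa>0$, clause (\ref{StrictCase2}) of Lemma \ref{QPosLem} forces $g(E_i,\nabla_g\rho)=0$ for every normal direction $E_i$, so $\nabla^\perp_{\Sigma_t}\rho\equiv 0$; Lemma \ref{ConeLem} then gives that each $\Sigma_t$ is a geodesic cone over $x_0$. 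Because $\nabla^\perp_{\Sigma_t}\Phi_{n,\kappa}^{t_0,x_0}=\partial_\rho K_{n,\kappa}\cdot\nabla^\perp_{\Sigma_t}\rho=0$, the squared-term vanishing forces $\mathbf{H}_{\Sigma_t}\equiv 0$, so $\Sigma_t$ is minimal. Clause (\ref{StrictCase1}) further forces $\sec_g(E_i(s)\wedge\gamma'(s))=-\kappa^2$ along every generator $\gamma$ of the cone, for every tangent direction $E_i$ orthogonal to $\gamma'$.

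I expect the principal obstacle to be extracting, from this bundle of data --- geodesic cone over $x_0$, minimality, and ambient sectional curvature $=-\kappa^2$ on every $T\Sigma_t$-plane containing the radial direction --- the conclusion that $\Sigma_t$ is totally geodesic in $M$ and isometric to $\mathbb{H}^n(\kappa)$. The plan is to exploit the equality case of Hessian comparison: along each radial geodesic from $x_0$, the Jacobi fields with initial conditions in $T_{x_0}\Sigma_t$ orthogonal to $\gamma'(0)$ must evolve as $\frac{\sinh(\kappa s)}{\kappa}$ times the parallel transport of their initial value, so $T_x\Sigma_t$ at each point $x=\exp_{x_0}(sv)\in\Sigma_t$ is obtained by parallel-transporting the fixed $n$-plane $T_{x_0}\Sigma_t\subset T_{x_0}M$ along $\gamma$. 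This is precisely the structure of a totally geodesic $\mathbb{H}^n(\kappa)$ inside $M$; Gauss's equation combined with the ambient $\sec=-\kappa^2$ on these planes then delivers the intrinsic constant curvature $-\kappa^2$.
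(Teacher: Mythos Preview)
Your monotonicity argument is essentially the paper's: radial cutoff, Proposition~\ref{MonotonicityFormulaProp}, cancellation of the $\mathbf{H}_{\Sigma_t}$ contribution in the error (exactly as the paper uses $(\frac{d}{dt}-\Delta_{\Sigma_t})\psi_R=-\Delta_g\psi_R+\sum_i\nabla_g^2\psi_R(E_i,E_i)$), Laplacian comparison via the Ricci lower bound, and passage to the limit. The paper packages the error as $\leq CR^{-1}\psi_{2R}$ and kills it using finiteness of $\int_{\Sigma_t}\Phi$ rather than invoking the Gaussian decay on the annulus directly, but this is a cosmetic difference. (Minor slip: you mean non\emph{negativity} of $Q$.)

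For the rigidity your reductions---$Q=0$ plus $\kappa>0$ gives $\nabla^\perp\rho=0$ via clause~\eqref{StrictCase2}, Lemma~\ref{ConeLem} gives the cone, then $\mathbf{H}=0$---match the paper. But your endgame via Jacobi fields and parallel transport is incomplete as stated. Even granting that the tangent Jacobi fields along each radial geodesic take the model form $\kappa^{-1}\sinh(\kappa s)$ times a parallel field, this only yields $A(\nabla_g\rho,\cdot)=0$; it does not by itself force the full second fundamental form to vanish (minimality plus a radial null direction still allows a nonzero trace-free $A$ on the transverse $(n-1)$-plane). Your final sentence asserts the desired structure rather than proving it.

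The paper takes a different and cleaner route that sidesteps the Jacobi analysis entirely. Once $\Sigma_t=\Sigma'$ is a static minimal cone with $\nabla^\perp\rho=0$ and $Q=0$, \eqref{BackwardSubmanifoldEq} collapses to $(\partial_t+\Delta_{\Sigma'})\Phi_{n,\kappa}^{t_0,x_0}=0$, i.e., $H_{n,\kappa}(t,x)=K_{n,\kappa}(t,\rho(x))$ is an \emph{exact} solution of the intrinsic heat equation on $\Sigma'$. Since $\Sigma'$ is (the paper argues) totally geodesic, Gauss's equation gives $\sec_{\Sigma'}\leq-\kappa^2$, so Lemma~\ref{SupersolutionLem} applied intrinsically to $\Sigma'$ forces constant curvature $-\kappa^2$. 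This recycles machinery already built in the paper and avoids having to control the second fundamental form directly via clause~\eqref{StrictCase1}.
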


\begin{rem}\label{RicCondRem}
	If all $\Sigma_t$ remain in a fixed compact subset of $M$, then there automatically exists such a constant $\Lambda$.  For instance, this is automatic if one has a classical mean curvature flow of a closed hypersurface on a compact time interval.
	
	In general, without a lower bound on the Ricci curvature, the result does not hold.  For instance, consider the metric in polar coordinates on $M=\Real^2$
	$$
	g=dr^2 +r^2 e^{\frac{1}{2}r^4}d\theta^2.
	$$
Computing the Gauss curvature one has
	$$
	K(r,\theta)= - (5r^2+r^6)\leq 0.
	$$
	Clearly, all geodesics rays from $0$ are of infinite length and so the metric is complete by the Hopf-Rinow theorem.  Hence,
$(\Real^2,g)$ is a Cartan-Hadamard manifold. 
	Moreover,  the curves $\sigma_R=\partial \mathcal{B}_R^g(0)$ for $R>0$ are of constant geodesic curvature (with respect to the outward normal) and have curvature
	$$
	H_{\sigma_R}=\frac{1}{R}+R^3.
	$$
	 It follows that if, 
	 $$
	 t\mapsto R(t)=\sqrt{\tan(-2t)}
	 $$
	 for $t\in \left( -\frac{\pi}{4},0\right) $, then
	 $$
	 t\mapsto  \partial \mathcal{B}_{R(t)}^g(0)
	 $$
	 is a classical mean curvature flow. 
	  However, as $\lim_{t\to -\frac{\pi}{4}^+} R(t)=\infty$, we can create a locally smooth mean curvature flow that is not a classical flow by extending the time of the flow to $(-\infty, 0)$ by setting
	 $$
	 t\mapsto \Sigma_t =\left\{\begin{array}{cc} \emptyset & t\leq  -\frac{\pi}{4} \\ \partial \mathcal{B}_{R(t)}^g(0)& t\in\left( -\frac{\pi}{4},0\right). \end{array} \right.
	  $$
	  This is an example of a flow ``blossoming from infinity" mentioned in \cite{Ilmanen1993} and is also one for which the monotonicity formula  does not hold across $t=-\frac{\pi}{4}$.
\end{rem}
\begin{proof}
	To verify the inequality we use Proposition \ref{MonotonicityFormulaProp} and Lemma \ref{QPosLem} together with appropriately chosen test functions $\psi_R$.
First pick a smooth cutoff $\chi:(-\infty, \infty)\to [0,1]$ 
 so $\chi (s)=1$ for $s\leq 1$, $\chi(s)=0$ for $s\geq 2$, $\chi'\leq 0$, and
 $$
  |\chi'|+|\chi''|\leq 100.
 $$
 For $R\geq 1$, let  $\chi_R(s)=\chi(R^{-1}s)$. It follows that $\chi_R$ is supported in $(-\infty,2R]$, $\chi_R'\leq 0$, $\chi_R'$ and $\chi_R''$ are supported in $[R,2R]$, and 
 $$
 R|\chi_R'|+R^2 |\chi''_R|\leq 100\chi_{2R}.
 $$
 Fix a point $x_0\in M$ and, for $R\geq 1$, let
  $$\psi_R(x)=\chi_R(\rho(x))=\chi_R( \dist_g(x,x_0)).$$
  Observe that this is compactly supported in $\bar{\mathcal{B}}_{2R}^g(x_0)$.
  We compute that
		$$
	\Delta_{\Sigma_t} \psi_R =  \Delta_{g}\psi_R +\mathbf{H}_{\Sigma_t}\cdot \nabla_{g}  \psi_R-\sum_{i=1}^{k}\nabla^2_{g }\psi_R (E_i, E_i),
	$$ 
		where $\set{E_1, \ldots, E_{k}}$ is an orthonormal basis of $N_x\Sigma_t\subset T_{x}M$. 
Hence, on $\Sigma_t$
$$
\left(\frac{d}{dt}-\Delta_{\Sigma_t} \right) \psi_R		=-\Delta_{g}\psi_R +\sum_{i=1}^{k}\nabla^2_{g }\psi_R (E_i, E_i).
$$
One computes that
$$
\nabla_g^2 \psi_R= \chi''_R d\rho \otimes d\rho + \chi'_R \nabla^2_g \rho.
$$
By the Laplace comparison theorem (e.g.,  \cite{Schoen1994}), the lower bound on Ricci curvature, the Hessian comparison theorem, the fact that $g$ is Cartan-Hadamard, and $\chi'\leq 0$, we conclude that on $\Sigma_t$, for $R\geq 1$,
\begin{align*}
\left(\frac{d}{dt}-\Delta_{\Sigma_t} \right) \psi_R& \leq - \chi''_R|\nabla_{\Sigma_t} \rho|^2-(n-1)  \chi'_R \ct_{\Lambda}(\rho) -\chi'_R\ct_0(\rho) (k-|\nabla^\perp_{\Sigma_t} \rho|^2)\\
 &\leq 100\left( R^{-2} +(n-1) R^{-1} \Lambda +k R^{-2}\right) \chi_{2R}\\
 &\leq CR^{-1}\psi_{2R}
\end{align*}
where the second inequality used that $\chi'_R$ and $\chi''_R$ are supported on $[R,2R]$ and $C=(n, k,\Lambda)$. 

%

For each $R\geq 1$, Proposition \ref{MonotonicityFormulaProp} implies
		\begin{align*}
		\frac{d}{dt} \int_{\Sigma_t} & \psi_R \Phi_{n,\kappa}^{t_0,x_0} dVol_{\Sigma_t}=\int_{\Sigma_t} \left(\left( \frac{d}{dt} -\Delta_{\Sigma_t} \right)\psi_R \right)\Phi_{n,\kappa}^{t_0,x_0} dVol_{\Sigma_t}\nonumber\\
		&- \int_{\Sigma_t} \psi_R \left( \left| \frac{\nabla^\perp_{\Sigma_t} \Phi_{n,\kappa}^{t_0,x_0}}{\Phi_{n,\kappa}^{t_0,x_0}} -\mathbf{H}_{\Sigma_t}\right|^2 + Q^{t_0,x_0}_{n,\kappa} \right) \Phi_{n,\kappa}^{t_0,x_0} dVol_{\Sigma_t}.\label{prop5.1}
	\end{align*}
	Integrating and using the above estimate with the fact that $\psi_R$ is non-negative yields
	\begin{equation*}
	\begin{aligned}
		 \int_{\Sigma_{t_2}}  &\psi_R \Phi_{n,\kappa}^{t_0,x_0} dVol_{\Sigma_{t_2}}+\int_{t_1}^{t_2} \int_{\Sigma_t} \psi_R \left( \left| \frac{\nabla^\perp_{\Sigma_t} \Phi_{n,\kappa}^{t_0,x_0}}{\Phi_{n,\kappa}^{t_0,x_0}} -\mathbf{H}_{\Sigma_t}\right|^2 + Q^{t_0,x_0}_{n,\kappa} \right) \Phi_{n,\kappa}^{t_0,x_0} dVol_{\Sigma_t}dt \\	 
&\leq 	\int_{\Sigma_{t_1}} \psi_R  \Phi_{n,\kappa}^{t_0,x_0} dVol_{\Sigma_{t_1}}	 + C R^{-1}\int_{t_1}^{t_2} \int_{\Sigma_t} \psi_{2R}  \Phi_{n,\kappa}^{t_0,x_0} dVol_{\Sigma_t} dt.
	\end{aligned}
\end{equation*}
Applying the monotone convergence theorem and using the exponential volume growth of $\Sigma_t$, along with the fact that $\psi_R$ is bounded, for any $t\in [t_1,t_2]$ one has 
	$$
	\lim_{R\to \infty} \int_{\Sigma_t} \psi_{R}  \Phi_{n,\kappa}^{t_0,x_0} dVol_{\Sigma_t} =\lim_{R\to \infty}\int_{\Sigma_t} \psi_{2R}  \Phi_{n,\kappa}^{t_0,x_0} dVol_{\Sigma_t}=\int_{\Sigma_t}  \Phi_{n,\kappa}^{t_0,x_0} dVol_{\Sigma_t}<\infty.
	$$
	Hence, letting $R\to \infty$ and appealing to Lemma \ref{QPosLem} and the monotone convergence theorem, one concludes
		\begin{equation}\label{WeightedIneqEqn}
		\begin{aligned}
		0	&\leq \int_{t_1}^{t_2} \int_{\Sigma_t}\left( \left| \frac{\nabla^\perp_{\Sigma_t} \Phi_{n,\kappa}^{t_0,x_0}}{\Phi_{n,\kappa}^{t_0,x_0}} -\mathbf{H}_{\Sigma_t}\right|^2 + Q^{t_0,x_0}_{n,\kappa} \right) \Phi_{n,\kappa}^{t_0,x_0} dVol_{\Sigma_t}dt \\	 
	&\leq 	\int_{\Sigma_{t_1}}  \Phi_{n,\kappa}^{t_0,x_0} dVol_{\Sigma_{t_1}}- \int_{\Sigma_{t_2}}  \Phi_{n,\kappa}^{t_0,x_0} dVol_{\Sigma_{t_2}},
		\end{aligned}
\end{equation}
which immediately proves the main inequality.

\smallskip
	  It remains only to characterize the case of equality. 
	To that end, first observe that $Q_{n,\kappa}^{t_0,x_0}(t,x,N_{x}\Sigma_t)$ is continuous in $x\neq x_0$ and so if $Q_{n,\kappa}^{t_0,x_0}(t',x',N_{x'}\Sigma_t)>0$ at a point $x'\in \Sigma_{t'}\setminus \set{x_0}$ for $t'\in (t_1,t_2)$, then
	it is positive in a neighborhood of $x'$ in $\Sigma_t$.  In fact, it is positive in a spacetime neighborhood of $(t',x')$ and so
    \begin{align*}
0<\int_{t_1}^{t_2} \int_{\Sigma_t}\left( \left| \frac{\nabla^\perp_{\Sigma_t} \Phi_{n,\kappa}^{t_0,x_0}}{\Phi_{n,\kappa}^{t_0,x_0}} -\mathbf{H}_{\Sigma_t}\right|^2 + Q^{t_0,x_0}_{n,\kappa} \right) \Phi_{n,\kappa}^{t_0,x_0} dVol_{\Sigma_t}dt.
    \end{align*}
	Thus, equality can occur only when $Q_{n,\kappa}^{t_0,x_0}(t,x,N_{x}\Sigma_t)=0$ for all $x\in \Sigma_t\setminus \set{x_0}$ and $t\in (t_1,t_2)$.  As $\kappa>0$, Lemma \ref{QPosLem} implies that at all such $x$,
	$$
     g(\nabla_g \rho, E_i)=0
     $$
     where $E_i\in N_x\Sigma_t$.  That is,
     $$
     |\nabla_g^\perp \rho|=0
     $$
     on $\Sigma_t\setminus\set{x_0}$. By Lemma \ref{ConeLem},  $\Sigma_t$ is a cone over $x_0$ that is totally geodesic.   As $\Sigma_t$ is assumed to be smooth at $x_0$ it is actually a totally geodesic submanifold. Hence, $\Sigma_t$ is also minimal and so the flow is static. For notational simplicity let us set $\Sigma'=\Sigma_t$ and observe we can think of $\Sigma'_s=\Sigma$, $s\in \Real$ as a static solution to mean curvature flow which agrees with $\Sigma_t$ for $t\in (t_1,t_2)$. The minimality of $\Sigma'$ along with the fact that $\nabla_g^\perp \rho=0$ on $\Sigma'$ and the computation \eqref{BackwardSubmanifoldEq} imply that
     $$
     \left(\frac{d}{dt}+\Delta_{\Sigma'}\right) \Phi_{n, \kappa}^{t_0,x_0}=0.
	$$
	This means $H_{n, \kappa}(t,x)=\Phi_{n,\kappa}^{t_0,x_0} (-t, x) $ solves the linear heat equation exactly on $\Sigma'$. Again using the fact that $\Sigma'$ is totally geodesic the Gauss equations imply that
	$$
	\sec_{\Sigma'}(P)=\sec_g(P)\leq -\kappa^2
	$$
	for any two-plane $P\subset T_x \Sigma'\subset T_x M$.
	Hence, by Lemma \ref{SupersolutionLem}, as $H_{n, \kappa}(t,x)$ is an exact solution to the heat equation on $\Sigma'$ it must be the case that $\Sigma'$ has constant curvature $-\kappa^2$.  That is, $\Sigma_t=\Sigma'$ is a totally geodesic smooth cone that is isometric to $\mathbb{H}^n(\kappa)$, the space-form of constant curvature $-\kappa^2$.  
\end{proof}

As a consequence of this monotonicity, we may use the kernels $\Phi_{n, \kappa}^{t_0,x_0}$ to define Gaussian density for mean curvature flows in Cartan-Hadamard manifolds as well as conclude the existence of tangent flows modeled on (singular) self-shrinkers in Euclidean space.  Before stating the next corollary, for the convenience of the readers, we first recall the definition of Gaussian density given by White \cite{WhiteBoundaryMCF}.  For $n\geq 1$, let $G,\hat{G}: (-\infty,0)\times \Real^d \to \Real$ be defined by
\begin{align*}
	G(t,x)=K_{n,0}(-t,|{x}|)=\frac{1}{(-4\pi t)^{\frac{m}{2}}}e^{-\frac{|x|^2}{-4t}} \text{ and }
	\hat{G}(t,x)=\chi(|x|)G(t,x),
\end{align*}
where $\chi$ is smooth and compactly supported in $[0,1)$ with $\chi=1$ on $[0,\frac{1}{2}]$ and $\chi'\leq 0$. 
\begin{definition}\cite[Definition 6.3]{WhiteBoundaryMCF}\label{density}
Given a locally smooth mean curvature flow of properly embedded $n$-dimensional submanifolds 
$$\mathcal{M}=t\in [T_1,T_2)\to \Sigma_t\subset M$$
and a point $x_0\in M$ and a time $t_0\in (T_1,T_2]$, the \emph{Gaussian density} of $\mathcal{M}$ at $(t_0, x_0)$ is defined to be
$$
\Theta(\mathcal{M}; (t_0,x_0))=\lim_{t \to t_0^-}\int_{i(\Sigma_t)} \hat{G}(t,x) dVol_{i(\Sigma_t)}
$$
where $i:M\to \Real^{d}$ is an isometric embedding of $M$.  This value is well defined and independent of the choice of $\chi$ and $i$.
\end{definition}
\begin{cor}
\label{DensityLem}
Suppose $\mathcal{M}=t\in [T_1,T_2)\mapsto \Sigma_t$ is a locally smooth mean curvature flow in $M$ of $n$-dimensional submanifolds with exponential volume growth and $Ric_g \geq -\Lambda^2 g$ on $\Sigma_t$, for $t\in [t_1,t_2]$ and a uniform $\Lambda\geq 0$. For any $x_0\in M$ and $t_0\in (T_1, T_2]$, there is a well defined and finite limit
$$
\Theta_{\kappa}(t_0, x_0)=\lim_{t\to t_0^-} \int_{\Sigma_t} \Phi_{n,\kappa}^{t_0,x_0} (t, x)dVol_{\Sigma_t} (x)\leq \int_{\Sigma_{T_1}} \Phi_{n,\kappa}^{t_0,x_0}(t,x) dVol_{\Sigma_{T_1}}(x).
$$
Moreover, this quantity agrees with the Gaussian density of the flow at $(t_0,x_0)$.  As a consequence,  for any sequence of times $t_i\to t_0^-$, up to passing to a subsequence, there is a self-shrinking integer rectifiable $n$-varifold $V$ in $\Real^{n+k}$ so 
$$
\Theta_{\kappa}(t_0,x_0)=\lambda[V]=\int \Phi_{n, 0}^{0,0}(-1,x) d\mu_V(x)=\Theta(\mathcal{M}, (t_0,x_0))
$$
and $\Sigma_{t_i}$ converges to $V$ in an appropriate sense, i.e., after natural identifications, rescalings of $\Sigma_{t_i}$ converge in the sense of varifolds to $V$.  
\end{cor}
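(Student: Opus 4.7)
The plan is to establish the three assertions in sequence. First, monotonicity of the curved Huisken quantity will give existence of the limit $\Theta_\kappa(t_0,x_0)$ and the upper bound. Second, a short-time comparison between the curved and Euclidean heat kernels via Lemma \ref{ApproxEuclidHKLem} will identify $\Theta_\kappa(t_0,x_0)$ with the Gaussian density $\Theta(\mathcal{M},(t_0,x_0))$ of Definition \ref{density}. Third, a parabolic rescaling together with the standard varifold/Brakke compactness arguments will produce the self-shrinking tangent varifold.

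For the first assertion, Theorem \ref{HuiskenMonThm} shows that
\[
F(s)=\int_{\Sigma_s}\Phi_{n,\kappa}^{t_0,x_0}(s,x)\,dVol_{\Sigma_s}(x)
\]
is monotone non-increasing on $[T_1,t_0)$; since $F\ge 0$, the limit $\Theta_\kappa(t_0,x_0)=\lim_{s\to t_0^-}F(s)$ exists and is bounded above by $F(T_1)$. For the second, fix a geodesically convex precompact ball $U=\mathcal{B}^g_R(x_0)$ and an isometric embedding $i\colon U\to\mathbb{R}^d$, and set $\tau=t_0-s$. Splitting $F(s)=F_{\mathrm{in}}(s)+F_{\mathrm{out}}(s)$ according to the decomposition $\Sigma_s=(\Sigma_s\cap U)\sqcup(\Sigma_s\setminus U)$, the Huisken monotonicity together with the lower kernel bound \eqref{KnDecayEstLB} gives uniform control of local volumes $Vol_g(\Sigma_s\cap\mathcal{B}^g_\rho(x_0))$ for each fixed $\rho$; combined with the Gaussian decay \eqref{KnDecayEstUB} via a dyadic annular decomposition of $M\setminus U$ this yields $F_{\mathrm{out}}(s)\to 0$ as $\tau\to 0^+$. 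For $F_{\mathrm{in}}$, Lemma \ref{ApproxEuclidHKLem} with $f\equiv 1$ gives
\[
\Big|F_{\mathrm{in}}(s)-\int_{\Sigma_s\cap U}K_{n,0}(\tau,|i(x)-i(x_0)|)\,dVol_{\Sigma_s}(x)\Big|\le C\tau^{1/3}\Theta(\tau),
\]
with $\Theta(\tau)$ uniformly bounded by the lemma, so both sides have the same limit; Definition \ref{density} identifies this common limit with $\Theta(\mathcal{M},(t_0,x_0))$.

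For the third assertion, set $\lambda_i=(t_0-t_i)^{-1/2}\to\infty$ and consider the rescalings $\widetilde\Sigma_i=\lambda_i\bigl(i(\Sigma_{t_i})-i(x_0)\bigr)\subset\mathbb{R}^d$. By scale-invariance of $K_{n,0}$ and the previous step, the Euclidean Gaussian density of $\widetilde\Sigma_i$ at the space-time origin equals $\Theta_\kappa(t_0,x_0)$, which is finite and uniform in $i$, providing the mass bounds needed to extract a subsequential weak limit. The intrinsic mean curvature of $\Sigma_{t_i}$ in $M$ and the Euclidean mean curvature of $i(\Sigma_{t_i})$ differ by a term controlled by the second fundamental form of $i(U)\subset\mathbb{R}^d$, which under rescaling is of order $\lambda_i^{-1}\to 0$, so the rescaled flows asymptotically obey a Euclidean mean curvature flow. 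Standard Brakke flow compactness (in the spirit of Ilmanen) then yields an integer rectifiable $n$-varifold $V$ supported in the tangent plane to $i(M)$ at $i(x_0)$, which is isometric to $\mathbb{R}^{n+k}$; the equality case of the classical Euclidean Huisken monotonicity in the limit forces $V$ to be self-shrinking with $\lambda[V]=\int\Phi_{n,0}^{0,0}(-1,\cdot)\,d\mu_V=\Theta_\kappa(t_0,x_0)$. The main technical obstacle is the vanishing of $F_{\mathrm{out}}(s)$: tracking the sharp kernel decay \eqref{KnDecayEstUB} against the exponential volume growth requires uniform-in-$s$ local volume bounds extracted from Huisken's monotonicity together with \eqref{KnDecayEstLB}.
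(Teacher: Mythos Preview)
Your proposal follows essentially the same route as the paper: existence of the limit from Theorem~\ref{HuiskenMonThm}, localization plus Lemma~\ref{ApproxEuclidHKLem} to identify it with the Gaussian density, and then the standard tangent-flow compactness machinery (the paper simply cites \cite[Sections~6 and~11]{WhiteBoundaryMCF} and \cite[Theorem~10.2]{KetoverZhou} for this last step, whereas you sketch the rescaling/Brakke argument explicitly).

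One point deserves more care. You write that ``$\Theta(\tau)$ is uniformly bounded by the lemma,'' but Lemma~\ref{ApproxEuclidHKLem} only gives $\Theta(\tau)\le C'(\Sigma,n)$ with $C'$ depending on the fixed submanifold $\Sigma$. Here $\Sigma=\Sigma_s$ varies with $s$, and you need the bound uniform as $s\to t_0^-$; otherwise the $C\tau^{1/3}\Theta(\tau)$ error term need not vanish. The paper flags exactly this issue and obtains the uniform bound by invoking a modification of \cite[Proposition~3.9]{Ecker2001} and \cite[Theorem~6.1]{WhiteBoundaryMCF}. Your Huisken-monotonicity-based local volume bounds could in principle supply this (apply Theorem~\ref{HuiskenMonThm} with shifted endpoints $t_0'=s+r^2$ to get $Vol_g(\Sigma_s\cap\mathcal{B}^g_r(x_0))\le Cr^n$ uniformly in $s$ for all small $r$, then sum over dyadic scales), but this is not what ``by the lemma'' says, and it should be spelled out.

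A smaller remark: for $F_{\mathrm{out}}(s)\to 0$ the paper simply uses the assumed exponential volume growth together with \eqref{KnDecayEstUB}; your detour through Huisken-derived volume bounds is unnecessary here, and in fact your stated mechanism (lower kernel bound \eqref{KnDecayEstLB} at the \emph{same} $t_0$) does not directly give uniform volume control on $\mathcal{B}^g_\rho(x_0)$ for fixed $\rho$, since as $\tau\to 0$ the kernel lower bound on $\{\rho(x)\le\rho\}$ degenerates. You would again need to shift the reference time as above.
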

\begin{proof}
  The existence and finiteness of the limit is an immediate consequence of Theorem \ref{HuiskenMonThm}.  	As the flow has exponential volume growth, it follows from \eqref{KnDecayEstUB} that for any $R>0$ one has
  $$
  \Theta_{\kappa}(t_0,x_0)=\lim_{t\to t_0^-} \int_{\Sigma_t\cap \mathcal{B}_R^g(x_0)} \Phi_{n,\kappa}^{t_0,x_0} (t, x)dVol_{\Sigma_t} (x).
  $$
 It then follows from Lemma \ref{ApproxEuclidHKLem} that $\Theta_{\kappa}(t_0,x_0)$ agrees with the Gaussian density -- here the uniform bound on $\Theta(t)$ on $\Sigma_t$ is a consequence of a straightforward modification of \cite[Proposition 3.9]{Ecker2001} and\cite[Theorem 6.1]{WhiteBoundaryMCF}.   The remaining claims then follow from \cite[Section 6 and 11]{WhiteBoundaryMCF} and  \cite[Theorem 10.2]{KetoverZhou}.

\end{proof}

\section{Colding-Minicozzi $\kappa$-Entropy}
Fix an $(n+k)$-dimensional Cartan-Hadamard manifold $(M,g)$.  Let $\Sigma\subset M$ be a submanifold of dimension $n$.  Recall that, for any $\kappa\geq 0$, the Colding-Minicozzi $\kappa$-entropy of $\Sigma$ in $(M,g)$ is defined to be
$$
\lambda_{g}^\kappa[\Sigma]=\sup_{x_0\in M, \tau>0} \int_{\Sigma} \Phi_{n, \kappa}^{0, x_0}(-\tau, x) dVol_{\Sigma}=\sup_{x_0\in M, \tau>0} \int_{\Sigma} \Phi_{n, \kappa}^{\tau, x_0}(0, x) dVol_{\Sigma}.
$$
It follows from Theorem \ref{HuiskenMonThm} that the $\kappa$-entropy is monotone along well-behaved mean curvature flows in a Cartan-Hadamard manifold with sectional curvature bounded from above by $-\kappa^2$. However, the quantity makes sense independent of its relationship with mean curvature flow and we first record some elementary properties of $\kappa$-entropies that hold on any Cartan-Hadamard manifold.

\subsection{Elementary Properties of  $\kappa$-Entropies}
In what follows we suppose $(M,g)$ is an $(n+k)$-dimensional Cartan-Hadamard manifold.

\begin{prop}\label{VolGrowthProp}
	If $\Sigma\subset M$ is an $n$-dimensional submanifold and  $\lambda_g^{\kappa}[\Sigma]< \infty$ for some $\kappa$, then $\Sigma$ has exponential volume growth.  If $\lambda_g^0[\Sigma]<\infty$, then $\Sigma$ has polynomial volume growth.
	
	In fact, for $n\geq 2, \kappa>0$ and $R\geq \kappa^{-1}$ we have the growth estimate:
	$$
	Vol_{g}(\mathcal{B}_R^g(x_0)\cap \Sigma)\leq C_{n, \kappa}  \lambda_{g}^\kappa[\Sigma] \sqrt{R} e^{\kappa (n-1) R} 
	$$
	and, when $\kappa=0$ or $n=1$ or $R\leq \kappa^{-1}$, 
	$$
	Vol_{g}(\mathcal{B}_R^g(x_0)\cap \Sigma)\leq C_{n, \kappa}  \lambda_g^\kappa[\Sigma] R^n .
	$$
\end{prop}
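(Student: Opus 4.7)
The key idea is that for any $x_0 \in M$ and any $\tau > 0$, the definition of the $\kappa$-entropy immediately gives
$$
\lambda_g^\kappa[\Sigma] \geq \int_\Sigma K_{n,\kappa}(\tau, \rho(x; x_0)) \, dVol_\Sigma(x).
$$
Since $r \mapsto K_{n,\kappa}(\tau, r)$ is monotone non-increasing (as already noted via the Millison identity \eqref{MillisonEqn}), restricting the integral to $\mathcal{B}_R^g(x_0) \cap \Sigma$ and bounding the integrand from below by $K_{n,\kappa}(\tau, R)$ yields the central inequality
$$
Vol_g(\mathcal{B}_R^g(x_0) \cap \Sigma) \leq \frac{\lambda_g^\kappa[\Sigma]}{K_{n,\kappa}(\tau, R)}.
$$
The proposition then reduces to choosing, in each regime of $(R, \kappa, n)$, a value of $\tau$ that makes $K_{n,\kappa}(\tau, R)$ as large as possible.

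For $n \geq 2$ and $\kappa > 0$, I would invoke the rescaling $K_{n,\kappa}(\tau, R) = \kappa^n K_n(\kappa^2 \tau, \kappa R)$ together with the Davies-Mandouvalos lower bound \eqref{KnDecayEstLB}. The $\tau$-dependent part of the exponent is $-\frac{(n-1)^2 \kappa^2 \tau}{4} - \frac{R^2}{4\tau}$, which by AM-GM is maximized at $\tau_* = \frac{R}{(n-1)\kappa}$, giving exponential decay $e^{-\frac{(n-1)\kappa R}{2}}$; combining this with the $\tau$-independent factor $e^{-\frac{(n-1)\kappa R}{2}}$ already present in \eqref{KnDecayEstLB} produces total decay $e^{-(n-1)\kappa R}$. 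For $R \geq \kappa^{-1}$ the quantities $\kappa R$ and $\kappa^2 \tau_* = \frac{\kappa R}{n-1}$ are both bounded below, so the polynomial prefactor $t^{-n/2}(1+r+t)^{(n-3)/2}(1+r)$ in \eqref{KnDecayEstLB} (with $t = \kappa^2\tau_*$, $r = \kappa R$) reduces to an explicit power of $R$; a short computation with exponents $-\tfrac{n}{2} + \tfrac{n-3}{2} + 1 = -\tfrac{1}{2}$ shows the net polynomial factor is $R^{-1/2}$. Substituting back into the key inequality gives the claimed bound $Vol_g(\mathcal{B}_R^g(x_0) \cap \Sigma) \leq C_{n,\kappa}\,\lambda_g^\kappa[\Sigma]\, \sqrt{R}\, e^{(n-1)\kappa R}$.

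For the polynomial bounds, the case $\kappa = 0$ is cleanest: choosing $\tau = R^2$ gives $K_{n,0}(R^2, R) = (4\pi R^2)^{-n/2} e^{-1/4}$, yielding $Vol_g \leq C_n \lambda_g^0[\Sigma] R^n$. The case $n=1$ follows from the remark immediately after Corollary \ref{KnkappaIneqCor} that $K_{1,\kappa} = K_{1,0}$. For $R \leq \kappa^{-1}$, the same choice $\tau = R^2$ keeps $\kappa R$ and $\kappa^2 \tau$ bounded by $1$, so every exponential and every polynomial factor in \eqref{KnDecayEstLB} is comparable to a constant; one then reads off $K_{n,\kappa}(R^2, R) \geq c_{n,\kappa} R^{-n}$ and the $R^n$ bound follows. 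The qualitative statements that finite $\lambda_g^\kappa[\Sigma]$ forces exponential volume growth, and that finite $\lambda_g^0[\Sigma]$ forces polynomial volume growth, are then immediate consequences of the quantitative estimates (picking $N$ large enough to dominate the prefactors and the rate $(n-1)\kappa$).

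The main obstacle here is computational rather than conceptual: one must carefully recombine the three polynomial factors in \eqref{KnDecayEstLB} at the AM-GM optimum $\tau_*$ to confirm the net prefactor is exactly $R^{-1/2}$, and then verify at the boundary $R \sim \kappa^{-1}$ that the constants from the polynomial and the exponential regimes can be absorbed into a single $C_{n,\kappa}$. Beyond this bookkeeping, the argument is a direct application of the definition of $\lambda_g^\kappa$ together with the heat kernel bounds already developed in Section \ref{HeatKernelSec}.
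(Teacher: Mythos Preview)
Your proposal is correct and follows essentially the same approach as the paper: bound the kernel from below on $\mathcal{B}_R^g(x_0)$ using radial monotonicity, then choose $\tau$ in each regime to extract the sharpest lower bound on $K_{n,\kappa}(\tau,R)$ from the Davies--Mandouvalos estimate \eqref{KnDecayEstLB}. The only cosmetic difference is that you locate the good $\tau$ by an explicit AM--GM optimization, arriving at $\tau_*=\frac{R}{(n-1)\kappa}$, whereas the paper simply writes down the corresponding value (stated there for $K_{n+1}$ as $t=\frac{1}{n}R$) and reads off the resulting bound; the computations and conclusions are the same.
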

\begin{proof}
	We first treat the case $\kappa=0.$  In this case, it is immediate from the definition that for $0\leq r \leq R$ and $t=R^2$ one has
	$$
	c_n R^{-n} \leq K_{n,0}(R^2,R)\leq K_{n,0}(R^2,r).
	$$
	Here $1>c_n>0$. 
	Hence, on $\mathcal{B}^g_{R}(x_0)$ one has $\Phi_{n,0}^{0,x_0}(-R^2, x)\geq  c_n R^{-n}$.
	It then follows that
	\begin{align*}
		c_n R^{-n} Vol_g(\mathcal{B}_R^g(x_0)\cap \Sigma)&\leq \int_{\mathcal{B}_R^g(x_0)\cap \Sigma} \Phi_{n,0}^{0,x_0}(-R^2, x) dVol_g(x) \\
		&\leq \int_{\Sigma} \Phi_{n,0}^{0,x_0}(-R^2, x) dVol_g(x) \leq \lambda_g^0[\Sigma].
	\end{align*}
	The claim follows by taking $C_{n,0}=c_n^{-1}$.   We note that $K_{1,\kappa}=K_{1,0}$ and so the same result holds when $n=1$.
	
	We now suppose $\kappa>0$ and $n\geq 2$. 
	As $\partial_r K_{n+1}(t,r)\leq 0$, one has for $0\leq r\leq R$ and $t>0$
	$$
	K_{n,0}(t,R)\leq K_{n,0}(t,r).
	$$
	It follows from \eqref{KnDecayEstLB} that there is a $c'=c'(n)>0$ such that, when $0\leq R\leq 1$ and $t=R^2\leq 1$,
	\begin{align*}
		c' R^{-n-1}  \leq K_{n+1}(R^2, R)\leq K_{n+1}( R^2,r).
	\end{align*}
	Hence, for $\kappa>0$ and $0\leq r \leq  R\leq \kappa^{-1}$, one has
	\begin{align*}
		c' R^{-n-1} & =c' \kappa^{n+1} (\kappa R)^{-n-1}\leq \kappa^{n+1} K_{n+1}(\kappa^2 R^2, \kappa R)= K_{n+1, \kappa}(R^2, R)\\
		&\leq K_{n+1, \kappa}(R^2,r).
	\end{align*}
	Arguing as above gives the claimed bounds when $R\leq \kappa^{-1}$. 
	
	Finally, suppose $R\geq 1$.  For $0\leq r\leq R$ and $t=\frac{1}{n}R$ and some $c''_{n}>0$, one has
	$$
	c'' R^{-\frac{1}{2}} e^{-nR} \leq K_{n+1}\left(\frac{1}{n}R,R
	\right) \leq K_{n+1}\left(\frac{1}{n}R,r\right).
	$$
	Hence, for $\kappa>0$, $R\geq \kappa^{-1}$  and any $0\leq r\leq R$,
\begin{align*}
	c'' \kappa^{n+\frac{1}{2}} R^{-\frac{1}{2}} e^{-\kappa nR}  &\leq  \kappa^{n+1} K_{n+1}\left( \frac{\kappa}{n} R, \kappa R\right) \\
	&= K_{n+1, \kappa}\left(\frac{1}{\kappa n}R, R\right) \leq K_{n+1, \kappa} \left( \frac{1}{\kappa n} R, r\right).
\end{align*}
	Arguing as before, we conclude that
	$$
	c'' \kappa^{n-\frac{1}{2}} R^{-\frac{1}{2}} e^{-\kappa (n-1)R}    Vol_g(\mathcal{B}_R^g(x_0)\cap \Sigma)\leq \lambda_g^{\kappa}[\Sigma]
	$$
	and so the desired estimate holds with $C_{n, \kappa}=\frac{1}{c''}\kappa^{-n+\frac{1}{2}}$.
\end{proof}

\begin{lem}\label{SmallScaleDensityLem}
	If $\Sigma\subset M$ is an $n$-dimensional  properly embedded smooth submanifold such that $\lambda_g^\kappa[\Sigma]<\infty$ for some $\kappa\geq 0$ and $x_0\in M$ is fixed, then 
	$$
	\lim_{\tau\to 0} 	\int_{\Sigma} \Phi_{n, \kappa}^{0,x_0} (-\tau,x) dVol_{\Sigma}(x)\leq 1
	$$
	with equality if and only if $x_0\in \Sigma$. 
\end{lem}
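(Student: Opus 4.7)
The plan is to localize the integral near $x_0$ and reduce the computation to a standard Euclidean density calculation, exploiting the rapid decay of $K_{n,\kappa}$ together with the exponential volume growth forced by finiteness of $\lambda_g^\kappa[\Sigma]$. First, fix $R > 0$ small enough that $U = \mathcal{B}_R^g(x_0)$ is geodesically convex and precompact, and choose an isometric embedding $i : (U, g) \to (\Real^d, g_\Real)$. Write
\begin{align*}
I(\tau) = \int_{\Sigma \cap U} \Phi_{n,\kappa}^{0,x_0}(-\tau, x) \, dVol_\Sigma(x) + \int_{\Sigma \setminus U} \Phi_{n,\kappa}^{0,x_0}(-\tau, x) \, dVol_\Sigma(x).
\end{align*}

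For the far-field contribution, finiteness of $\lambda_g^\kappa[\Sigma]$ gives exponential volume growth of $\Sigma$ by Proposition \ref{VolGrowthProp}. Combined with the Davies--Mandouvalos decay estimate \eqref{KnDecayEstUB}, which bounds $K_{n,\kappa}(\tau, r)$ by $C \tau^{-n/2} e^{-r^2/(4\tau)}$ times subexponential factors for $\tau$ small, a dyadic annular decomposition of $\Sigma \setminus U$ shows the far-field integral tends to $0$ as $\tau \to 0$.

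For the near-field part, I would apply Lemma \ref{ApproxEuclidHKLem} with $f \equiv 1$ to replace $\Phi_{n,\kappa}^{0,x_0}(-\tau, \cdot)$ by the Euclidean kernel $G(\tau, x) = K_{n,0}(\tau, |i(x) - i(x_0)|)$ up to an $O(\tau^{1/3})$ error. The problem then reduces to evaluating $\lim_{\tau \to 0} \int_{\Sigma \cap U} G(\tau, x) \, dVol_\Sigma(x)$. If $x_0 \notin \Sigma$, proper embeddedness forces $\dist_g(x_0, \Sigma) > 0$, so Gaussian concentration drives this limit to $0$. If $x_0 \in \Sigma$, then near $i(x_0)$ the image $i(\Sigma)$ is a smooth graph over its tangent $n$-plane $T_{i(x_0)} i(\Sigma)$ with vanishing first-order correction, and a standard Euclidean calculation yields the limit $1$, since the integral of $K_{n,0}(\tau, |\cdot|)$ over an $n$-plane through the origin is identically $1$. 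Combining the two cases gives $\lim_{\tau \to 0} I(\tau) \leq 1$ with equality exactly when $x_0 \in \Sigma$.

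The main technical point is the final Euclidean density step: one has to check that the graph parametrization error is negligible at the Gaussian concentration scale $\sqrt{\tau}$. This is classical for smooth submanifolds, but it is really the place where smoothness of $\Sigma$ is used; everything else is bookkeeping built on the decay estimates of \cite{daviesHeatKernelBounds1988} and Lemma \ref{ApproxEuclidHKLem}.
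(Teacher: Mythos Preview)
Your proposal is correct and follows essentially the same route as the paper. The paper simply invokes Proposition~\ref{VolGrowthProp} for exponential volume growth and then cites Proposition~\ref{ApproxIdentProp} (together with \eqref{KnDecayEstUB}) to conclude that the limit equals $1$ when $x_0\in\Sigma$ and $0$ otherwise; your near-field/far-field decomposition, use of Lemma~\ref{ApproxEuclidHKLem}, and Euclidean density computation are precisely the contents of the proof of Proposition~\ref{ApproxIdentProp}, so you have reproduced that proposition inline rather than citing it.
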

\begin{proof}
	By Proposition \ref{VolGrowthProp}, $\Sigma$ has exponential volume growth.  Hence, by Proposition \ref{ApproxIdentProp} and \eqref{KnDecayEstUB} we have
	$$
	\lim_{\tau\to 0^+} 	\int_{\Sigma} \Phi_{n, \kappa}^{0,x_0} (-\tau,x) dVol_{\Sigma}(x)=\left\{\begin{array}{cc} 1 & x_0 \in \Sigma \\ 0 & x_0 \not\in \Sigma. \end{array} \right. 
	$$ 
\end{proof}

\begin{prop}   If $\Sigma,\Sigma'\subset M$ are $n$-dimensional submanifolds with $\Sigma\subset \Sigma'$ and $0\leq \kappa \leq \kappa_0$, then
	$$
	1\leq \lambda_{g}^{{\kappa}_0}[\Sigma]\leq \lambda_g^\kappa[\Sigma]\leq \lambda_g^\kappa[\Sigma'].
	$$
\end{prop}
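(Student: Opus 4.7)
The plan is to verify the three inequalities in turn, each a short consequence of earlier results in the paper. The first, $1 \leq \lambda_g^{\kappa_0}[\Sigma]$, is a small-scale density statement, while the other two are pointwise comparisons of the kernel $\Phi_{n,\kappa}^{0,x_0}$ that pass through the integral and supremum.

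For the lower bound $1 \leq \lambda_g^{\kappa_0}[\Sigma]$, I assume $\Sigma$ is non-empty (otherwise the proposition is vacuous) and fix a point $x_0 \in \Sigma$. If $\lambda_g^{\kappa_0}[\Sigma] = \infty$ the bound is trivial, so assume it is finite. Then Proposition \ref{VolGrowthProp} gives that $\Sigma$ has exponential volume growth, so Lemma \ref{SmallScaleDensityLem} applies and yields
$$
\lim_{\tau \to 0^+} \int_\Sigma \Phi_{n, \kappa_0}^{0, x_0}(-\tau, x) \, dVol_\Sigma(x) = 1.
$$
Since $\lambda_g^{\kappa_0}[\Sigma]$ is the supremum of such integrals over $(x_0, \tau) \in M \times (0, \infty)$, this forces $\lambda_g^{\kappa_0}[\Sigma] \geq 1$. (Alternatively, one can bypass the finiteness hypothesis of Lemma \ref{SmallScaleDensityLem} by applying Proposition \ref{ApproxIdentProp} to a compactly supported $\psi \in C^0_c(\Sigma)$ with $0 \leq \psi \leq 1$ and $\psi(x_0) = 1$, and using non-negativity of $\Phi_{n, \kappa_0}^{0, x_0}$ to dominate $\int_\Sigma \psi \Phi \to 1$ by $\int_\Sigma \Phi$.)

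For the middle inequality $\lambda_g^{\kappa_0}[\Sigma] \leq \lambda_g^\kappa[\Sigma]$, I invoke Corollary \ref{KnkappaIneqCor}, which gives the pointwise comparison $K_{n, \kappa_0}(t, r) \leq K_{n, \kappa}(t, r)$ for all $t > 0$ and $r \geq 0$ (with equality when $n = 1$ by the remark following the corollary). Evaluating at $r = \dist_g(x, x_0)$ shows $\Phi_{n, \kappa_0}^{0, x_0}(-\tau, x) \leq \Phi_{n, \kappa}^{0, x_0}(-\tau, x)$ pointwise on $\Sigma$, and integrating then taking the supremum over $(x_0, \tau)$ gives the claim. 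For the rightmost inequality $\lambda_g^\kappa[\Sigma] \leq \lambda_g^\kappa[\Sigma']$: since $\Sigma \subset \Sigma'$ as $n$-dimensional submanifolds, the Riemannian volume form on $\Sigma$ agrees with the restriction of that on $\Sigma'$, and because $\Phi_{n, \kappa}^{0, x_0} \geq 0$ one has $\int_\Sigma \Phi \leq \int_{\Sigma'} \Phi$ for every choice of $(x_0, \tau)$; the supremum preserves this. There is essentially no serious obstacle in this proposition -- it is a bookkeeping check that unpacks the definitions and applies the earlier kernel comparisons.
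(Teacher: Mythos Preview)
Your proof is correct and follows essentially the same approach as the paper: the pointwise kernel comparison from Corollary \ref{KnkappaIneqCor} for the middle inequality, non-negativity of $\Phi$ for the inclusion inequality, and Lemma \ref{SmallScaleDensityLem} for the lower bound of $1$. You are in fact slightly more careful than the paper in handling the finiteness hypothesis of Lemma \ref{SmallScaleDensityLem}, and your alternative via Proposition \ref{ApproxIdentProp} with a compactly supported cutoff is a clean way to sidestep that hypothesis entirely.
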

\begin{proof}
	By Corollary \ref{KnkappaIneqCor} and the Remark following it, one has, for $t<0$,
	$$
	\Phi_{n, {\kappa}_0}^{t_0, x_0} (t, x)\leq  \Phi_{n, \kappa}^{t_0, x_0}(t,x).
	$$
	Hence,
	$$
	\int_{\Sigma} \Phi_{n, {\kappa}_0}^{t_0, x_0} (t, x)dVol_{\Sigma}\leq \int_{\Sigma}\Phi_{n, \kappa}^{t_0, x_0}(t,x) dVol_{\Sigma}\leq  \int_{\Sigma'}\Phi_{n, \kappa}^{t_0, x_0}(t,x) dVol_{\Sigma'}
	$$
	and the second and third inequalities follow by taking suprema.  The first inequality is an immediate consequence of Lemma \ref{SmallScaleDensityLem}.
\end{proof}
\begin{rem}
	It is possible that $\lambda_g^{{\kappa}_0}[\Sigma]<\infty$  and $\lambda_{g}^{{\kappa}}[\Sigma]=\infty $ for some $0\leq \kappa<{\kappa}_0.$
\end{rem}

\begin{prop}
	If $\Sigma \subset M$ is an $n$-dimensional totally geodesic submanifold on which the induced metric is of constant curvature $\kappa$, then
	$$
	\lambda_{g}^\kappa[\Sigma]=1.
	$$
\end{prop}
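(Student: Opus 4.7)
The plan is to reduce both inequalities $\lambda_g^\kappa[\Sigma]\geq 1$ and $\lambda_g^\kappa[\Sigma]\leq 1$ to heat conservation on $\Sigma$. Two structural facts do the work. First, because $\Sigma$ is a (complete) totally geodesic submanifold of the Cartan-Hadamard manifold $M$, it is geodesically convex in $M$: the ambient geodesic joining any two points of $\Sigma$ already lies in $\Sigma$, so $\dist_g(x,y)=\dist_\Sigma(x,y)$ for all $x,y\in\Sigma$, and the nearest-point projection $\pi_\Sigma\colon M\to\Sigma$ is well-defined and $1$-Lipschitz---the standard behavior of projection onto a closed convex subset of a Hadamard space. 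Second, since $(\Sigma,g|_\Sigma)$ has constant curvature $-\kappa^2$, Theorem~\ref{ConstCurvCharThm} identifies $K_{n,\kappa}(\tau,\dist_\Sigma(\cdot,\cdot))$ with the heat kernel of $(\Sigma,g|_\Sigma)\cong(\mathbb{H}^n(\kappa),g_{\mathbb{H}(\kappa)})$, which has constant and hence bounded-below Ricci curvature, so heat is conserved: for every $p_0\in\Sigma$ and $\tau>0$,
\[
\int_\Sigma K_{n,\kappa}(\tau,\dist_\Sigma(p_0,x))\,dVol_\Sigma(x)\ =\ 1.
\]

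For the lower bound I would simply test at any $x_0\in\Sigma$ and any $\tau>0$: total geodesicness gives $\dist_g(x_0,x)=\dist_\Sigma(x_0,x)$ on $\Sigma$, and heat conservation then yields $\int_\Sigma\Phi_{n,\kappa}^{0,x_0}(-\tau,x)\,dVol_\Sigma(x)=1$. For the upper bound I would fix arbitrary $x_0\in M$ and $\tau>0$, set $p_0:=\pi_\Sigma(x_0)\in\Sigma$, and combine $1$-Lipschitzness of $\pi_\Sigma$ with the identity $\pi_\Sigma(x)=x$ for $x\in\Sigma$ to deduce
\[
\dist_g(x_0,x)\ \geq\ \dist_g(p_0,x)\ =\ \dist_\Sigma(p_0,x)\qquad\text{for all }x\in\Sigma.
\]
Since $K_{n,\kappa}(\tau,\cdot)$ is non-increasing (recorded in Section~\ref{HeatKernelSec}), this pointwise bound transfers to the integrands, and the heat-conservation identity above closes the estimate:
\[
\int_\Sigma\Phi_{n,\kappa}^{0,x_0}(-\tau,x)\,dVol_\Sigma(x)\ \leq\ \int_\Sigma K_{n,\kappa}(\tau,\dist_\Sigma(p_0,x))\,dVol_\Sigma(x)\ =\ 1.
\]

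I do not anticipate a genuine obstacle. The step that looks least automatic is the $1$-Lipschitz property of $\pi_\Sigma$, but this is a classical fact about nearest-point projection onto closed convex subsets of Hadamard spaces and follows directly from the CAT($0$) hinge inequality applied at the right-angle hinges at $p_0$ and at $\pi_\Sigma(x)$. The real content of the proposition is the interplay between total geodesicness---which simultaneously forces intrinsic and extrinsic distances to agree and makes $\Sigma$ geodesically convex---and the heat conservation identity on the model space form provided by Theorem~\ref{ConstCurvCharThm}.
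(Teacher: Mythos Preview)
Your proposal is correct and follows essentially the same route as the paper's proof: both use that $\Phi_{n,\kappa}^{0,x_0}$ restricts to the heat kernel on $\Sigma$ (giving the value $1$ for $x_0\in\Sigma$), that $K_{n,\kappa}(\tau,\cdot)$ is radially non-increasing, and that nearest-point projection onto $\Sigma$ decreases distances to points of $\Sigma$. The only cosmetic differences are that the paper cites Lemma~\ref{SmallScaleDensityLem} for the lower bound (whereas you obtain it directly from heat conservation, which is slightly cleaner since you need that identity anyway) and phrases the projection inequality via the law of cosines and triangle comparison rather than via the $1$-Lipschitz property of CAT($0$) projection---but these are two names for the same fact.
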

\begin{proof}
	By Lemma \ref{SmallScaleDensityLem},  $\lambda_{g}^\kappa [\Sigma]\geq 1$ so we only have to prove the reverse inequality.  To that end, Millison's identity \eqref{MillisonEqn} and the positivity of $K_{n, \kappa}$ imply that  $\Phi_{n,\kappa}^{0,x_0}$ is radially decreasing. 
	Given $x_0\in \Sigma\subset M$ one has by the hypotheses on $\Sigma$ that $\Phi_{n,\kappa}^{0,x_0}(t,x) $ restricts to the backwards heat kernel  and so
	$$
	\int_{\Sigma} \Phi_{n,\kappa}^{0,x_0}(t,x) dVol_{\Sigma}(x)=1.
	$$
	While for $x_0\not \in \Sigma$, let $x_0'\in \Sigma$ be the nearest point in $\Sigma$ to $x_0$.  For $x\in \Sigma$, it follows from the law of cosines and the triangle comparison property of Cartan-Hadamard manifolds that $\dist_{g}(x,x_0')\leq \dist_{g}(x,x_0)$.  Hence,  for all $x\in \Sigma$,
	$$
	\Phi_{n, \kappa}^{0, x_0}(t,x )=K_{n, \kappa}(t, \dist_{g}(x,x_0))\leq K_{n, \kappa}(t, \dist_{g}(x,x_0'))=\Phi_{n, \kappa}^{0, x_0'}(t,x)
	$$
	and the claim follows.
\end{proof}

Finally, we collect observations about the $\kappa$-entropy of closed submanifolds.

\begin{prop}\label{CpctEntProp}
	If $\Sigma\subset M$ is an $n$-dimensional closed submanifold and $\kappa\geq 0$, then for any $\kappa$ such that $\lambda_g^\kappa[\Sigma]>1$, there is a pair $(t_0,x_0)\in  (0, \infty)\times M$, possibly depending on $\kappa$, so
	$$
	\lambda_g^\kappa[\Sigma]=\int_{\Sigma} \Phi_{n, \kappa}^{0, x_0}(-t_0, x) dVol_{\Sigma}(x)=\int_{\Sigma} \Phi_{n,\kappa}^{t_0,x_0}(0,x)dVol_\Sigma(x)<\infty.
	$$
	For such a $\Sigma$ and $\kappa$, if $n\geq 2$ and $ \kappa'>\kappa$, then
	$$	\lambda_g^{\kappa'}[\Sigma]>\lambda_g^{\kappa}[\Sigma]>1.
	$$
	For such a $\Sigma$ and $\kappa$, if $\Sigma'\subset M$ is an $n$-dimensional submanifold with $\Sigma\subset \Sigma'$ and $\Sigma'\setminus \Sigma \neq \emptyset$, then
	$$
\lambda_g^{\kappa}[\Sigma']>\lambda_g^\kappa[\Sigma]>1.
	$$
\end{prop}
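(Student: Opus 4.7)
The assertion has three parts: existence of a maximizing pair, strict monotonicity of $\lambda_g^\kappa[\Sigma]$ in $\kappa$, and strict monotonicity in the submanifold. I will dispatch these in that order, all resting on the existence result and the strict pointwise positivity/inequality of the kernels.

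For the existence of a maximizing pair, set
$$
F(t_0, x_0) = \int_\Sigma \Phi_{n, \kappa}^{t_0, x_0}(0, x)\,dVol_\Sigma(x),
$$
which is continuous on $(0, \infty) \times M$ by smoothness of $K_{n,\kappa}$ and compactness of $\Sigma$. The plan is to trap any maximizing sequence inside a compact subset of $(0, \infty) \times M$ by ruling out three escape regimes under the hypothesis $\lambda_g^\kappa[\Sigma] > 1$: (i) $t_0 \to \infty$, where the uniform decay \eqref{KnDecayEstUB} forces $F \to 0$; (ii) $x_0$ leaving every compact set with $t_0$ in a compact subinterval of $(0, \infty)$, where compactness of $\Sigma$ yields $\dist_g(x, x_0) \to \infty$ uniformly on $\Sigma$ and hence $F \to 0$; and (iii) $t_0 \to 0^+$, where I aim at the uniform bound $\limsup_{t_0 \to 0^+} \sup_{x_0 \in M} F(t_0, x_0) \leq 1$. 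Since $\lambda_g^\kappa[\Sigma] > 1$, no maximizing sequence can enter these regimes, and continuity produces the desired maximizer $(t_0^*, x_0^*)$, automatically giving finiteness of $\lambda_g^\kappa[\Sigma]$.

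For the $\kappa$-monotonicity part, combine the maximizer from the first step with the strict pointwise inequality of Corollary \ref{KnkappaIneqCor}, which for $n \geq 2$ gives $K_{n, \kappa_2}(t,r) < K_{n, \kappa_1}(t,r)$ whenever $\kappa_1 < \kappa_2$. Assuming the two entropies coincided, both would exceed $1$, and the existence result would supply a maximizer for the entropy carrying the strictly smaller kernel; substituting the strictly larger kernel at this pair yields a strictly larger integral, bounded above by the other entropy, contradicting the assumed equality. For the submanifold part, let $(t_0^*, x_0^*)$ be a maximizer for $\Sigma$ and evaluate $F$ against $\Sigma'$ at the same pair. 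Compactness of $\Sigma$ together with the equality of dimensions implies (via the implicit function theorem) that $\Sigma$ is relatively clopen in $\Sigma'$, so $\Sigma' \setminus \Sigma$ is a nonempty open subset of the $n$-manifold $\Sigma'$ with strictly positive $n$-volume; since the induced volume forms agree on $\Sigma$ and the kernel is everywhere strictly positive, the decomposition
$$
\int_{\Sigma'} \Phi_{n,\kappa}^{t_0^*, x_0^*}(0, x)\,dVol_{\Sigma'} = \lambda_g^\kappa[\Sigma] + \int_{\Sigma' \setminus \Sigma} \Phi_{n,\kappa}^{t_0^*, x_0^*}(0, x)\,dVol_{\Sigma'}
$$
delivers $\lambda_g^\kappa[\Sigma'] > \lambda_g^\kappa[\Sigma]$.

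The main obstacle is the uniform $t_0 \to 0^+$ bound in regime (iii) of the existence argument. Lemma \ref{SmallScaleDensityLem} provides pointwise convergence $F(t_0, x_0) \to 1$ for $x_0 \in \Sigma$ and $\to 0$ for $x_0 \notin \Sigma$, but one must preclude a scenario in which $t_n \to 0^+$ while $x_n$ simultaneously approaches $\Sigma$ at a compatible rate, potentially producing $F(t_n, x_n)$ accumulating above $1$. My plan is to cover the compact $\Sigma$ by finitely many geodesically convex charts in which isometric embeddings into some $\mathbb{R}^d$ are available, apply Lemma \ref{ApproxEuclidHKLem} chart-by-chart to reduce to the corresponding Euclidean quantity, and exploit the second-order local expansion of a smooth compact $n$-submanifold of $\mathbb{R}^d$ near the nearest point to $x_0$ to obtain $\sup_{x_0 \in \mathbb{R}^d} \int_\Sigma (4\pi t)^{-n/2} e^{-|x-x_0|^2/(4t)}\,dVol \leq 1 + O(\sqrt{t})$ as $t \to 0^+$.
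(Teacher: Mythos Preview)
Your approach mirrors the paper's: the same three-regime elimination for existence of a maximizer, then substitution of that maximizer into the strict pointwise kernel inequalities (Corollary~\ref{KnkappaIneqCor} and positivity of $\Phi_{n,\kappa}$) for the second and third claims. You are in fact more careful than the paper in regime (iii): the paper only invokes the \emph{pointwise} limit of Lemma~\ref{SmallScaleDensityLem} for each fixed $x_0$ and then asserts without further comment that the supremum can be restricted to $\tau\in[\tau_1,\tau_1^{-1}]$, whereas you correctly isolate the need for uniformity in $x_0$ over the already-confined compact set, and your plan---reduce to the Euclidean Gaussian via Lemma~\ref{ApproxEuclidHKLem} on finitely many convex charts, then use the uniform $1+O(\sqrt t)$ bound coming from the second-order graphical description of a compact smooth submanifold---is a valid way to supply it.
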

\begin{proof}
	Fix a point $x_1\in M$ and $\kappa\geq 0$.  As $\Sigma$ is closed, there is an $R_1$ so $\Sigma\subset \mathcal B_{R_1}^{g}(x_1)$.  
	By the decay estimates on $K_n$ of \eqref{KnDecayEstUB}, for any $\epsilon>0$ there is an $R_{\epsilon, \kappa}>R_0$, such that if $x_0\in  M\setminus  \mathcal B_{R_{\epsilon, \kappa}}^{g}(x_1)$, then, for all $\tau>0$, one has
	$$
	\int_{\Sigma} \Phi_{n, \kappa}^{0,x_0} (-\tau,x) dVol_{\Sigma}(x)< \epsilon.
	$$
	Similarly, by \eqref{TimeDecayEqn} for $x_0\in  \mathcal B_{R_1}^{g}(x_1)$ one has
	$$
	\lim_{\tau\to \infty} 	\int_{\Sigma} \Phi_{n, \kappa}^{0,x_0} (-\tau,x) dVol_{\Sigma}(x)=0.
	$$
	As $\Sigma$ is smooth and automatically has exponential volume growth, by Lemma \ref{SmallScaleDensityLem}, for any fixed $x_0$, one has
	$$
	\lim_{\tau\to 0} 	\int_{\Sigma} \Phi_{n, \kappa}^{0,x_0} (-\tau,x) dVol_{\Sigma}(x)\leq 1.
	$$
	Hence, if $\lambda_g^\kappa[\Sigma]>1$ there must be a $\tau_1\in (0,1)$ and an $R_2>R_1$ such that
	$$
	\lambda_{\kappa}^g[\Sigma]=\sup_{x_0\in  \bar{\mathcal{B}}_{R_2}^{g}(x_1), \tau\in [\tau_1, \tau_1^{-1}]} \int_{\Sigma} \Phi_{n,\kappa}^{0,x_0} (-\tau,x) dVol_{\Sigma}(x).
	$$
	This is the supremum of a continuous function over a compact set, so it is achieved and has a finite value.  This proves the first claim.  
	
	For the second claim observe that, as $n\geq 2$, we have, by Corollary \ref{KnkappaIneqCor}, that 
	$$\Phi_{n, {\kappa}_0}^{t_0, x_0} (t, x)<  \Phi_{n, \kappa}^{t_0, x_0}(t,x).$$
	As we may apply the first claim to $\Sigma$ and $\kappa$,  there is a $(t_0,x_0)$ so
	$$
	\lambda_g^{\kappa}[\Sigma]=\int_{\Sigma} \Phi_{n, {\kappa}}^{t_0, x_0} (0, x)dVol_{\Sigma}(x)<\int_{\Sigma} \Phi_{n, {\kappa}'}^{t_0, x_0} (0, x)dVol_{\Sigma}(x)\leq \lambda_g^{\kappa'}[\Sigma].
	$$

	As $\Sigma\subset \Sigma'$, $\Sigma\setminus \Sigma'\neq \emptyset$ and $\Phi_{n, \kappa}^{t_0, x_0}(0, \cdot)$ is everywhere positive.
	$$
	\lambda_g^\kappa [\Sigma]=\int_{\Sigma} \Phi_{n,\kappa}^{t_00, x_0} (0, x) dVol_{\Sigma}< \int_{\Sigma'}\Phi_{n,\kappa}^{t_0, x_0} (0, x) dVol_{\Sigma'}\leq \lambda_g^\kappa[\Sigma'].
	$$
	Again the $(t_0,x_0)$ exist based on the hypotheses on $\Sigma$ and $\kappa$.
\end{proof}

\subsection{Monotonicity of $\kappa$-entropies}
We next use the monotonicity formula to show Theorem \ref{MainMonThm}, i.e., that the $\kappa$-entropy is monotone along a well-behaved mean curvature flow inside a Cartan-Hadamard manifold with sectional curvature bounded from above by $-\kappa^2_0$.
\begin{proof}[Proof of Theorem \ref{MainMonThm}]
	Given $\epsilon>0$, the definition of $\lambda_g^\kappa$ ensures there is a $\tau>0$ and $x_0\in M$ so
	$$
	\lambda_g^\kappa[\Sigma_{t_2}]-\epsilon\leq \int_{\Sigma_{t_2}} \Phi_{n,\kappa}^{0,x_0}(-\tau, x)d\mathrm{Vol}_{\Sigma_{t_2}}= \int_{\Sigma_{t_2}} \Phi_{n,\kappa}^{\tau+t_2,x_0}(t_2, x)d\mathrm{Vol}_{\Sigma_{t_2}}.
	$$
	The second equality follows from the definition of the $\Phi_{n,\kappa}^{t_0,x_0}$. 
	The curvature hypotheses and volume bound imply Theorem \ref{HuiskenMonThm} holds and so
	\begin{align*}
		\lambda_g^\kappa[\Sigma_{t_2}]-\epsilon&\leq \int_{\Sigma_{t_2}} \Phi_{n,\kappa}^{\tau+t_2,x_0}(t_2, x)d\mathrm{Vol}_{\Sigma_{t_2}}\leq \int_{\Sigma_{t_1}} \Phi_{n,\kappa}^{\tau+t_2,x_0}(t_1, x)d\mathrm{Vol}_{\Sigma_{t_1}}\\
		&=\int_{\Sigma_{t_1}} \Phi_{n,\kappa}^{0,x_0}(-\tau+t_1-t_2, x)d\mathrm{Vol}_{\Sigma_{t_1}}\leq \lambda_g^\kappa[\Sigma_{t_1}].
	\end{align*}
The first conclusion follows as $\epsilon>0$ was arbitrary.	

To verify the last claim, observe that when the flow is a classical mean curvature flow of a closed manifold, $F:[t_1,t_2]\times \Sigma\to M$, then $F([t_1,t_2]\times \Sigma)$ is a compact subset of $M$ and so there is automatically a uniform lower bound, $\Lambda$, on $Ric_g$ along the flow.  In addition, in a Cartan-Hadamard manifold, the sectional curvatures are, by definition, non-positive and so one may always take $\kappa=\kappa_0=0$.

\end{proof}
Because the  result becomes technically simpler for classical flows of closed submanifolds it yields information about the $\kappa$-entropy of closed hypersurfaces.
\begin{prop}\label{LowerBoundProp}
		Suppose $(M,g)$ is an $(n+1)$-dimensional Cartan-Hadamard manifold with $\sec_{g}\leq -{\kappa}^2_0$, $0\leq \kappa\leq \kappa_0$ and $\Sigma\subset M$ is a closed hypersurface.  There is a constant $\epsilon(n)>0$ so
		$$
		\lambda_g^\kappa[\Sigma]\geq 1+\epsilon(n)>1.
		$$
		Moreover if either $\Sigma$ is mean convex or $n=1$ or $2$, then,
		$$
	\lambda_g^\kappa[\Sigma]\geq \lambda[\mathbb{S}^n]>1.
		$$
		When $\kappa>0$, the inequality is strict.
		Furthermore, if $\Sigma$ is connected but not diffeomorphic to $\mathbb{S}^n$, then
			$$
		\lambda_g^\kappa[\Sigma]> \lambda[\mathbb{S}^{n-1}]>\lambda[\mathbb{S}^n].
		$$
\end{prop}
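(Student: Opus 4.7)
The plan is to evolve $\Sigma$ by classical mean curvature flow and extract information at the first singular time. Since $\Sigma$ is a closed hypersurface in a Cartan-Hadamard manifold, a barrier argument using the strictly convex proper function $f_\kappa(\rho)$ from \eqref{fkEqn} (or just comparison with a large geodesic sphere) forces the flow $t \in [0,T) \mapsto \Sigma_t$, $\Sigma_0 = \Sigma$, to develop a first singularity at some $T < \infty$. On any compact subinterval $[0,t_2] \subset [0,T)$ the flow sweeps out a compact subset of $M$, so $\mathrm{Ric}_g \geq -\Lambda^2 g$ holds uniformly along $\Sigma_t$ and the hypotheses of Theorem~\ref{MainMonThm} and Corollary~\ref{DensityLem} are met.

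Pick a singular point $x_0 \in M$ at time $T$. Corollary~\ref{DensityLem} produces a self-shrinking integer-rectifiable $n$-varifold $V$ in $\mathbb{R}^{n+1}$ with
\begin{equation*}
\Theta_\kappa(T,x_0) = \lambda[V],
\end{equation*}
and combining this with Theorem~\ref{MainMonThm} yields, for every $t \in (0,T)$,
\begin{equation*}
\lambda_g^\kappa[\Sigma] \geq \lambda_g^\kappa[\Sigma_t] \geq \Theta_\kappa(T,x_0) = \lambda[V].
\end{equation*}
When $\kappa > 0$, the first inequality is strict: the equality case of Theorem~\ref{HuiskenMonThm} forces $\Sigma_t$ to be a geodesic cone isometric to $\mathbb{H}^n(\kappa)$, which a smooth closed hypersurface never is. Statement~(1) then reduces to the universal lower bound $\lambda[V] \geq 1 + \epsilon(n)$ for any non-flat self-shrinker $V$ in $\mathbb{R}^{n+1}$ (e.g., Bernstein--Wang); the hypothesis $T < \infty$ rules out $V$ being a flat plane.

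For statement~(2), if $\Sigma$ is mean convex then the evolution equation for $H$ together with the maximum principle preserves mean convexity along $\Sigma_t$ (the ambient curvature contributes with the favorable sign because $\sec_g \leq 0$). Hence the tangent flow $V$ is a generalized cylinder $\mathbb{S}^m(\sqrt{2m}) \times \mathbb{R}^{n-m}$ by the structure theory of mean convex singularities, and Stone's computation gives
\begin{equation*}
\lambda[V] = \lambda[\mathbb{S}^m] \geq \lambda[\mathbb{S}^n].
\end{equation*}
When $n = 1$, Grayson's theorem forces a round-point singularity with $\lambda[V] = \lambda[\mathbb{S}^1]$, and when $n = 2$ the same conclusion $\lambda[V] \geq \lambda[\mathbb{S}^2]$ for \emph{any} closed $\Sigma$ is exactly the content of \cite{BernsteinWang2}. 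Statement~(3) is then a routine refinement: under its hypothesis the flow cannot shrink to a round point (Huisken's theorem, or its mean convex analogue, would force $\Sigma \cong \mathbb{S}^n$), so $V$ cannot be the multiplicity-one round $\mathbb{S}^n$; the only remaining shrinkers in the mean convex or low-dimensional lists are cylinders $\mathbb{S}^m(\sqrt{2m}) \times \mathbb{R}^{n-m}$ with $m \leq n-1$, and Colding--Minicozzi's strict monotonicity $\lambda[\mathbb{S}^n] < \lambda[\mathbb{S}^{n-1}]$ completes the chain.

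The main obstacle is exporting the Euclidean classification of shrinker singularities back through Corollary~\ref{DensityLem}: although the tangent varifold genuinely lives in flat $\mathbb{R}^{n+1}$, I need to verify (i) that mean convexity is preserved along the curved flow, (ii) that the convergence to $V$ in the Euclidean identification is strong enough to inherit the cylindrical/spherical classification, and (iii) the topological conclusion that a flow whose singularities are all round points comes from a sphere. Items~(i) and (iii) are standard maximum-principle and Huisken-type arguments, while~(ii) is essentially built into Corollary~\ref{DensityLem}; the genuinely delicate input is the $n=2$ non-mean-convex case, which I simply quote from \cite{BernsteinWang2}.
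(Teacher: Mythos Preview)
The proposal is correct and follows essentially the same route as the paper: run classical mean curvature flow until the first singularity, blow up to obtain a non-flat self-shrinker $V$ in $\mathbb{R}^{n+1}$, bound $\lambda_g^\kappa[\Sigma]\geq \lambda[V]$ via monotonicity (with strictness from the equality case of Theorem~\ref{HuiskenMonThm} when $\kappa>0$), and then invoke the appropriate shrinker classifications. The only differences are cosmetic: the paper attributes the $1+\epsilon(n)$ gap to White's Brakke regularity rather than a Bernstein--Wang result, uses Abresch--Langer for $n=1$ rather than Grayson, and for the final claim argues contrapositively (entropy $\leq \lambda[\mathbb{S}^{n-1}]$ forces the tangent shrinker to be the round sphere with locally smooth convergence, hence $\Sigma\cong\mathbb{S}^n$) rather than appealing to Huisken's convexity theorem.
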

\begin{proof}
	 Fix a point $x_1\in M$ and choose $R_1\geq 2$ large enough so $\Sigma \subset \mathcal{B}^g_{\frac{1}{2}R_1}(x_1)$.   As $(M,g)$ is a Cartan-Hadamard manifold it follows from the mean curvature comparison theorem that $\partial \mathcal{B}^g_{R_1}(x_1)$ is smooth and strictly convex -- see Proposition \ref{HessCompProp} or \cite[Section 4.8]{Jost2008}.   In particular, one has short time existence of the classical mean curvature flow, $t\in [0, T)\mapsto \Sigma_t$ of $\Sigma_0=\Sigma$ and this flow remains inside of $\mathcal{B}^g_{R_1}(x_1)$.  In fact, we may take $T\in (0, \infty]$ to be the maximal smooth time of existence of the classical flow.  As there are no closed minimal surfaces in a Cartan-Hadamard manifold, $T<\infty$ and the flow must also form a singularity in finite time.  If $\Sigma_0$ is mean convex, then it follows from standard computations on the evolution of mean curvature and the parabolic maximum principle that $\Sigma_t$ remains mean convex.   Note that there is a uniform lower bound on $Ric_g$ in $\mathcal{B}^g_{R_1}(x_1)$ and so Theorem \ref{HuiskenMonThm} applies to the flow automatically.
	
	At the first singular time, $T>0$, there is at least one singular point $x_0\in \mathcal{B}_{R_1}^g(x_1)$.  One may take a parabolic blow-up around $(x_0, T)$, and it follows from Theorem \ref{HuiskenMonThm} and White's version of Brakke regularity \cite{WhiteReg}
	that,  up to passing to a subsequence, the flow converges to a non-flat self-shrinker in $\Real^{n+1}$.  As this shrinker is the limit of classical flows, \cite{WhiteReg} applies and we obtain an $\epsilon=\epsilon(n)>0$ so
	$$
	\lambda_g^\kappa[\Sigma]\geq \Theta_\kappa(T,x_0)\geq 1+\epsilon(n)>1.
	$$
	This proves the first claim.
	
	 When the flow is mean convex it follows from \cite{HASLHOFER20181137} -- see also \cite{ HuiskenSinestrariConvexity,WhiteMC1, WhiteMC2} -- that the singularity must be a non-flat generalized cylinder and hence has entropy at least $\lambda[\mathbb{S}^n]$.  Likewise, when  $n=1$ the classification of embedded shrinkers in \cite{AbreschLanger} and when $n=2$ the classification of low entropy shrinkers in \cite{BernsteinWang2} imply the entropy is at most $\lambda[\mathbb{S}^n]$ -- here we use that an immersed shrinker has entropy at least $2$.  The strictness of the inequality when $\kappa>0$ is an immediate consequence of the strictness of the monotonicity in Theorem \ref{HuiskenMonThm} when $\kappa>0$ and the fact that the flow consists of closed hypersurfaces -- which cannot be geodesic cones.
    
    To conclude the proof observe that in either case above if $	\lambda_g^\kappa[\Sigma]\leq \lambda[\mathbb{S}^{n-1}]$, then the shrinker obtained must be the round sphere $\sqrt{2n} \mathbb{S}^n$ and the convergence of the rescaled flow must be locally smooth and converge to the round sphere.  As $\Sigma$ is connected this means $\Sigma$ is diffeomorphic to the sphere.
 	 
\end{proof}
\section{Rigidity of some $\kappa$-entropy minimizers}

%

Observe that for $\kappa_0\geq \kappa\geq 0$ the lower bound on entropy for closed hypersurfaces in an $(n+1)$-dimensional Cartan-Hadamard manifold, $(M,g)$, with curvature bounded from above by $-\kappa^2_0$ is sharp in the sense that, for any $x\in M$,
$$
\lim_{R\to 0} \lambda_g^\kappa[\partial \mathcal{B}_R^g(x)]=\lambda[\mathbb{S}^n].
$$
However, for $\kappa>0$, by Proposition \ref{LowerBoundProp}, equality is never achieved. Indeed,
$$
\lambda_g^\kappa[\Sigma]>\lambda[\mathbb{S}^{n}]
$$
where $\Sigma$ is a hypersurface for which Proposition \ref{LowerBoundProp} applies.  When $\kappa=0$ it is possible for the lower bound to be achieved, but this implies rigidity of both the hypersurface $\Sigma$ and the metric $g$ in the region $\Sigma$ encloses. 
\begin{thm}\label{RigidityThm}
	Suppose $(M,g)$ is an $(n+1)$-dimensional Cartan-Hadamard manifold and that $\Omega\subset M$ is a compact domain with $\Sigma=\partial \Omega$ smooth and mean convex.  If
	$$
	\lambda_g^0[\partial \Omega]= \lambda[\mathbb{S}^n],
	$$
	then there is an $R>0$ so $(\Omega, g|_{\Omega})$ is isometric to $(\bar{B}_R, g_{\Real}|_{\bar{B}_R})$, the standard Euclidean metric restricted to the closed Euclidean ball in $\Real^{n+1}$ of radius $R$.
	When $n=1$ or $2$, the same result is true without the hypothesis that $\partial \Omega$ is mean convex.
\end{thm}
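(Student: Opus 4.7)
The plan is to run the classical mean curvature flow of $\Sigma=\partial\Omega$ and exploit the equality $\lambda_g^0[\Sigma]=\lambda[\mathbb{S}^n]$ to force equality throughout Huisken's monotonicity formula for a carefully chosen basepoint, from which both the shape of $\Sigma$ and flatness of the enclosed region will follow.

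First I would start the classical mean curvature flow $t\in[0,T)\mapsto \Sigma_t$ from $\Sigma_0=\Sigma$. Under the mean convexity hypothesis (or in the regime $n\le 2$ where the shrinker classifications of \cite{AbreschLanger} and \cite{BernsteinWang2} apply), $\Sigma_t$ stays smooth and embedded until a first singular time $T<\infty$; recall $(M,g)$ has no closed minimal hypersurfaces. By Theorem \ref{MainMonThm} and Proposition \ref{LowerBoundProp},
\[
\lambda[\mathbb{S}^n]\le \lambda_g^0[\Sigma_t]\le \lambda_g^0[\Sigma_0]=\lambda[\mathbb{S}^n],
\]
so $\lambda_g^0[\Sigma_t]\equiv \lambda[\mathbb{S}^n]$ along the flow, and the blow-up analysis in Proposition \ref{LowerBoundProp} produces a singular point $x_0\in M$ whose tangent flow is the round shrinking sphere $\sqrt{2n}\,\mathbb{S}^n$. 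Applying Theorem \ref{HuiskenMonThm} with this specific basepoint $(T,x_0)$, the quantity $F(t)=\int_{\Sigma_t}\Phi_{n,0}^{T,x_0}(t,\cdot)\,dVol_{\Sigma_t}$ is non-increasing, satisfies $F(0)\le \lambda[\mathbb{S}^n]$ and, by Corollary \ref{DensityLem}, limits to $\lambda[\mathbb{S}^n]$ as $t\to T^-$; hence $F\equiv \lambda[\mathbb{S}^n]$ is constant.

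Constancy of $F$ forces pointwise vanishing throughout the spacetime of the flow of both error terms in \eqref{WeightedIneqEqn}: $Q_{n,0}^{T,x_0}(t,x,N_x\Sigma_t)=0$, together with the Euclidean self-shrinker equation based at $x_0$,
\[
\mathbf{H}_{\Sigma_t}=-\frac{\rho\,(\nabla_g\rho)^\perp}{2(T-t)}.
\]
I would then invoke the equality analysis of Lemma \ref{QPosLem} with $\kappa=0$: vanishing of $Q$ forces equality in the Hessian comparison step, so along the minimizing geodesic $\gamma$ from $x_0$ to $x\in\Sigma_t$ the sectional curvatures $\sec_g(W\wedge\gamma')$ vanish for every tangent direction $W\in T_x\Sigma_t$ not parallel to $\gamma'$. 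Near $x_0$, Brakke-White regularity (applicable since the tangent flow is the smooth sphere $\sqrt{2n}\,\mathbb{S}^n$) implies $\Sigma_t$ is $C^\infty$-close to geodesic spheres about $x_0$, so the radial field $\nabla_g\rho$ is transverse to $\Sigma_t$; bilinearity of the sectional curvature then upgrades this to $\sec_g(V\wedge\nabla_g\rho)\equiv 0$ for every $V\in T_yM$ and every $y$ in a neighborhood $U$ of $x_0$. The Brinkmann-type argument used at the end of the proof of Lemma \ref{SupersolutionLem} (via the conformal Hessian identity $\nabla_g^2(\rho^2/2)=g$) then shows $g|_U$ is flat.

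The mean convex flow sweeps out $\Omega\setminus\{x_0\}$ and $Q=0$ holds on the entire sweep-out, so I would propagate flatness from $U$ to all of $\Omega$ by an open-and-closed argument along radial geodesics from $x_0$, using that the identity $\nabla_g^2(\rho^2/2)=g$ extends wherever the radial field is transverse to the flow. Once $(\Omega,g|_\Omega)$ is flat, the shrinker equation is Huisken's standard Euclidean one, so each $\Sigma_t$ is a Euclidean sphere of radius $\sqrt{2n(T-t)}$ about $x_0$; in particular $\Sigma=\Sigma_0$ is a round Euclidean sphere of radius $R=\sqrt{2nT}$ and $(\Omega,g|_\Omega)$ is isometric to $(\bar{B}_R,g_{\Real}|_{\bar{B}_R})$. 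The main obstacle is the flatness upgrade: turning the pointwise equality $Q=0$ along the flow into genuine flatness of $g$ on a neighborhood of $x_0$ (which requires controlling transversality of $\nabla_g\rho$ to $\Sigma_t$ via the sphere asymptotics from Brakke-White regularity), and then spreading this flatness to all of $\Omega$.
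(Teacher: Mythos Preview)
Your overall arc---run the classical flow, find a spherical singular point $x_0$ via the entropy bound, force $F(t)\equiv\lambda[\mathbb{S}^n]$, read off $Q\equiv 0$ and the self-shrinker equation, and conclude flatness of $g$ on $\Omega$---matches the paper's. The divergence is in how flatness is propagated from a neighborhood of $x_0$ to all of $\Omega$.

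The paper does not attempt a direct spatial open--closed argument. Instead it first proves an auxiliary result (Proposition~\ref{RigidConvexProp}): for domains whose boundary has principal curvatures larger than a threshold $\gamma$ depending on ambient curvature bounds, Huisken's convex MCF in Riemannian manifolds \cite{Huisken1986} applies, each $\Omega_t$ is geodesically convex, and convexity furnishes transversality of the radial direction to $\Sigma_t$ everywhere, so $Q\equiv 0$ gives flatness on all of $\Omega_t$; then \cite{BernsteinWang1,JZhu} identify $\partial\Omega$ as a round sphere. The general case is reduced to this via a \emph{temporal} continuity argument on the set $\mathcal{G}$ of times $t$ for which $(\Omega_t,g)$ is already isometric to a Euclidean ball: Brakke--White regularity puts $[T-\epsilon,T)\subset\mathcal{G}$, and if $t_0=\inf\mathcal{G}>0$ then $\Omega_{t_0}$ is flat, so the threshold $\gamma$ for $\Omega_{t'}$ with $t'$ slightly below $t_0$ can be made small enough that Proposition~\ref{RigidConvexProp} applies again, a contradiction.

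Your spatial propagation has a genuine gap: transversality of $\nabla_g\rho$ to $\Sigma_t$ is only established near $x_0$, and the open--closed argument ``along radial geodesics'' does not explain how transversality is recovered once you leave the region where Brakke--White gives sphere asymptotics. There is a clean fix you did not exploit: the self-shrinker identity you derived gives $(\nabla_g\rho)^\perp=-\tfrac{2(T-t)}{\rho}\mathbf{H}_{\Sigma_t}$, so in the mean-convex case $H_{\Sigma_t}>0$ for $t>0$ forces transversality everywhere, and your argument goes through. For $n\le 2$ without mean convexity, however, your claim that ``the flow sweeps out $\Omega\setminus\{x_0\}$'' is not justified, since the $\Omega_t$ need not be nested; the paper's temporal argument avoids this because on $\mathcal{G}$ the flow \emph{is} the shrinking-spheres flow, which is monotone. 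Finally, you omit the case of disconnected $\partial\Omega$, which the paper dispatches at the end via Proposition~\ref{CpctEntProp}.
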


%

To prove this result we first use a result of Huisken \cite{Huisken1986} to verify it for sufficiently convex sets.
\begin{prop}\label{RigidConvexProp}
Suppose $(M,g)$ is an $(n+1)$-dimensional Cartan-Hadamard manifold. Fix a compact set $V\subset M$ and let
$$
\gamma = n \max_{x\in V, P\in G_2(T_xM)} |\mathrm{sec}_{g} (P)|+n^2 \max_{x\in V} |\nabla_g \mathrm{Riem}_g|.
$$
If $\Omega\subset V$ is a compact, geodesically convex domain with $\partial \Omega$ smooth, connected, and with all principal curvatures with respect to the inward normal strictly larger than $\gamma$,
then there is a point $x_0\in \mathrm{int}(\Omega)$, a time $T\in (0, \infty)$, and a unique classical mean curvature flow $t\in [0, T)\mapsto \Sigma_t$ that disappears at $x_0$ at time $T$ satisfying $\Sigma_0=\partial \Omega$, and for all $t\in [0, T)$, $\Sigma_t=\partial \Omega_t$ where $\Omega_t$ are compact and geodesically convex. 

If, in addition,  $\Omega$ satisfies
	$$
\lambda_g^0[\partial \Omega]= \lambda[\mathbb{S}^n],
$$
	then there is an $R>0$ so $(\Omega, g|_{\Omega})$ is isometric to $(\bar{B}_R, g_{\Real}|_{\bar{B}_R})$ the standard Euclidean metric restricted to a closed Euclidean ball in $\Real^{n+1}$ of radius $R$.
\end{prop}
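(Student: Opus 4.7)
The plan is to first apply Huisken's theorem on convex mean curvature flow in Riemannian ambient manifolds to obtain short-time existence and contraction to a round point, then extract the flatness of $(M,g)$ inside $\Omega$ from the equality case of the Huisken monotonicity formula, and finally identify $\Omega$ with a Euclidean ball using the classification of Euclidean self-shrinkers. For existence, I would invoke Huisken's 1986 theorem \cite{Huisken1986}: the hypothesis that every principal curvature of $\partial\Omega$ exceeds $\gamma$ is the curvature-dependent pinching required to preserve convexity under the classical flow in a Riemannian ambient, and his analysis furnishes a unique flow on $[0,T)$ with each $\Omega_t$ geodesically convex, a finite extinction time $T < \infty$, a single extinction point $x_0 \in \mathrm{int}(\Omega)$, and smooth convergence of the parabolic rescalings of $\Sigma_t$ around $(x_0, T)$ to a round shrinking sphere in $T_{x_0}M$ with its Euclidean metric.

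For the rigidity, assume $\lambda_g^0[\partial\Omega] = \lambda[\mathbb{S}^n]$. Since $(M,g)$ is Cartan-Hadamard we may take $\kappa = \kappa_0 = 0$ in Theorem \ref{MainMonThm}, so $\lambda_g^0[\Sigma_t]$ is non-increasing; mean convexity is preserved along the flow, so Proposition \ref{LowerBoundProp} gives the reverse inequality and forces $\lambda_g^0[\Sigma_t] \equiv \lambda[\mathbb{S}^n]$ on $[0,T)$. Smooth convergence to the round tangent flow together with Corollary \ref{DensityLem} identify the Gaussian density $\Theta_0(T, x_0) = \lambda[\mathbb{S}^n]$, so Theorem \ref{HuiskenMonThm} applied with the backwards kernel $\Phi_{n,0}^{T, x_0}$ yields
$$
\lambda[\mathbb{S}^n] = \lim_{t \to T^-} \int_{\Sigma_t} \Phi_{n,0}^{T,x_0}(t,x)\, dVol_{\Sigma_t} \leq \int_{\Sigma_0} \Phi_{n,0}^{T,x_0}(0,x)\, dVol_{\Sigma_0} \leq \lambda_g^0[\Sigma_0] = \lambda[\mathbb{S}^n],
$$
so all quantities coincide and the integrand in \eqref{WeightedIneqEqn} vanishes pointwise on the flow spacetime. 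This produces two identities: the self-shrinker equation $\nabla^\perp \Phi_{n,0}^{T, x_0}/\Phi_{n,0}^{T, x_0} = \mathbf{H}_{\Sigma_t}$, and the rigidity $Q_{n,0}^{T, x_0} \equiv 0$.

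Since $\kappa = 0$, case \eqref{StrictCase2} of Lemma \ref{QPosLem} is automatic, so $Q \equiv 0$ forces case \eqref{StrictCase1}: along each radial geodesic $\gamma_x$ from $x_0$ to $x \in \Sigma_t$, one has $\sec_g(\gamma_x'(s) \wedge E(s)) = 0$ for every parallel transport of a tangent vector $E \in T_x\Sigma_t$ not parallel to $\nabla_g \rho$. Geodesic convexity makes each $\Omega_t$ star-shaped from $x_0$, rendering $\nabla_g \rho$ transverse to $\Sigma_t$, and projecting the tangent basis onto $\gamma_x'^\perp$ produces a spanning family; by the symmetry of the tidal operator, $R_g(\cdot, \gamma_x') \gamma_x'$ then vanishes on $\gamma_x'^\perp$ along every radial geodesic through $\Omega$, since the flow sweeps out $\Omega \setminus \{x_0\}$. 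A standard Jacobi field computation shows that normal Jacobi fields vanishing at $s = 0$ grow linearly along each radial geodesic, so $d\exp_{x_0}$ is a radial isometry onto $\exp_{x_0}^{-1}(\Omega)$; together with the Cartan-Hadamard property that $\exp_{x_0}$ is a global diffeomorphism, this yields an isometry between $(\Omega, g|_\Omega)$ and a compact subset of $(\Real^{n+1}, g_{\Real})$ under which $x_0$ maps to the origin. Under this isometry the shrinker identity becomes the standard Euclidean self-shrinker equation $\mathbf{H} = -X^\perp/(2T)$ for $\partial \Omega$, so together with the entropy constraint $\lambda[\partial \Omega] = \lambda[\mathbb{S}^n]$ and the classification of closed mean convex self-shrinkers in $\Real^{n+1}$, $\partial \Omega$ must be the round sphere $\sqrt{2nT}\, \mathbb{S}^n$, whence $\Omega$ is isometric to $\bar{B}_R$ with $R = \sqrt{2nT}$.

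The main technical obstacle I expect is rigorously carrying out the transversality-and-spanning step, producing vanishing sectional curvature in every $2$-plane containing each radial direction at every point of $\Omega$, after which the Jacobi-field and exponential-coordinate identification with Euclidean space is essentially standard.
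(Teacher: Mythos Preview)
Your proposal is correct and agrees with the paper through the point where equality in \eqref{WeightedIneqEqn} is obtained. From there, however, you take a genuinely different route to the conclusion.

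The paper proceeds as follows: once $Q_{n,0}^{T,x_0}\equiv 0$ forces $\sec_g(\nabla_g\rho\wedge W)=0$ for all $W\perp\nabla_g\rho$, it invokes the conformal--Hessian argument of Lemma~\ref{SupersolutionLem} (the Brinkmann warped--product characterization) to conclude that \emph{all} sectional curvatures vanish on $\Omega$. Flatness then yields an isometric immersion $F:\Omega\to\Real^{n+1}$, and the final identification $F(\Omega)=\bar B_R$ is obtained from the entropy equality $\lambda[F(\partial\Omega)]=\lambda[\mathbb{S}^n]$ by appealing to the entropy rigidity theorems of \cite{BernsteinWang1} and \cite{JZhu}.

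You instead use only the vanishing of the radial tidal operator $R_g(\cdot,\gamma')\gamma'$---which, as you note, follows from $Q\equiv 0$ together with the transversality/spanning step and the negative--semidefiniteness of this operator in a Cartan--Hadamard manifold---to conclude directly via the Jacobi equation that $\exp_{x_0}^{-1}$ is an isometry of $\Omega$ onto a domain in $T_{x_0}M\cong\Real^{n+1}$. You then exploit the \emph{other} equality $\nabla^\perp\Phi/\Phi=\mathbf{H}$, which under this isometry becomes the Euclidean self--shrinker equation for $\partial\Omega$, and finish with Huisken's elementary classification of compact mean--convex self--shrinkers as round spheres. This is more self--contained: it bypasses the warped--product structure theorem and, more significantly, removes the dependence on the deep results of \cite{BernsteinWang1} and \cite{JZhu}. (Indeed, once you know $\partial\Omega$ is a compact convex self--shrinker, the entropy hypothesis is no longer needed in the final step.) The paper's route, on the other hand, establishes the stronger intermediate fact that $g$ is flat on $\Omega$ in all $2$--planes, and its final appeal to entropy rigidity would continue to work even without mean convexity, which is relevant for the low--dimensional cases of Theorem~\ref{RigidityThm}.

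One small clarification worth making explicit in your write--up: transversality of $\nabla_g\rho$ to $\Sigma_t$ uses the \emph{strict} convexity of $\Omega_t$ (a geodesic tangent to $\partial\Omega_t$ would leave $\Omega_t$ immediately, contradicting that the radial geodesic from $x_0$ lies in $\Omega_t$), not merely star--shapedness.
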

\begin{proof}
	We first observe that as $(M,g)$ is a Cartan-Hadamard manifold, for any fixed $x_1\in M$ and any $r>0$,  $\bar{\mathcal{B}}^g_r(x_1)$, the closed ball around $x_1$ in $M$  with respect to the metric $g$ of radius $r$ is geodesically convex and has $\partial \bar{\mathcal{B}}^g_r(x_1)$ smooth with positive second fundamental form -- see \cite[Section 4.8]{Jost2008}.

	Fix a point $x_1\in M$ and choose $R_1>0$ large enough such that $V\subset \mathcal{B}^g_{R_1}(x_1)$. 
	We immediately observe that the choice of $\gamma$ implies that the mean curvature of $\partial \Omega$ with respect to the inward normal satisfies $H_{\partial \Omega}> n\gamma \geq 0$.
	The choice of $\gamma$ and the fact that $(M,g)$ is a Cartan-Hadamard manifold  ensures that the second fundamental form of $\partial \Omega$ satisfies the hypotheses of \cite[Theorem 1.1]{Huisken1986}.  As $\partial \Omega \subset \bar{\mathcal{B}}^g_{R_1}(x_1)$ and $\partial \bar{\mathcal{B}}^g_{R_1}(x_1)$ has positive mean curvature with respect to the inward normal, we may apply \cite[Corollary 1.4]{Huisken1986} to $\Sigma_0=\partial \Omega$ in order to obtain a unique classical mean curvature flow $t\in [0,T)\mapsto \Sigma_t$.
	The result of \cite{Huisken1986} ensures that each $\Sigma_t$ has strictly positive principle curvatures with respect to the inward normal and that it becomes singular at time $T$ and disappears in a round point at $x_0$.  Clearly, the flow moves strictly inward and so $x_0\in \mathrm{int}(\Omega)$. This proves the first claim.
	
	To conclude the proof, observe that as the mean curvature flow disappears in a round point at $x_0$,  Corollary \ref{DensityLem} ensures
	$$
	\lim_{t\to T} \int_{\Sigma_t} \Phi_{n, 0}^{T, x_0} (t, y)dVol_{\Sigma_t} (y) = \lambda[\mathbb{S}^n].
	$$
    The entropy assumption and Theorem \ref{HuiskenMonThm} imply that for all $t\in [0, T)$ one has
    \begin{equation}\label{MonConstantEqn}
   \int_{\Sigma_t} \Phi_{n, 0}^{T, x_0} (t, y)dVol_{\Sigma_t} (y) = \lambda[\mathbb{S}^n]= \lambda_g^0[\Sigma_t].
\end{equation}
    Now choose compact sets $\Omega_t$ so $\Omega_t\subset \Omega$ and $\partial \Omega_t=\Sigma_t$.  One verifies that, for $0\leq t_0<t_1<T$, 
   $$
    x_0\in \Omega_{t_1}\subset \mathrm{int}(\Omega_{t_0})\subset \Omega.
    $$
  Moreover, as $(M,g)$ is a Cartan-Hadamard manifold and $\Sigma_t=\partial \Omega_t$ has positive second fundamental form with respect to the inward normal, $\Omega_t$ is geodesically convex. 
  In particular, for each $t\in (0, T)$ and each $y\in \partial \Omega_t$, the unique unit speed geodesic connecting $x_0$ to $y$, $\gamma_{y}:[0,L]\to M$ satisfies $\gamma_y(s)\in \Omega_t$ for all $s\in [0, L]$ and $\gamma_y'(L)\in T_y M$ is transverse to $T_y \Sigma_t$.  It follows from Proposition \ref{MonotonicityFormulaProp} and Lemma \ref{QPosLem} and \eqref{MonConstantEqn} that for all $s\in (0, L]$ one has
  $$
  \sec_g(P)=0
  $$
  where $P$ is any plane in $T_{\gamma_y(s)}M$ containing $\gamma'_y(s)$.  In particular, if $\rho$ is the distance function to $x_0$, then one has
  $$
   \mathrm{sec}_{g}(\nabla_g\rho \wedge W)=0
   $$
   for all $y\neq x_0$ and $W$ orthogonal to $\nabla_g \rho(y)$.  Arguing as Lemma \ref{SupersolutionLem} this means the sectional curvature of $g$ vanishes in $\Omega$.
   
   In particular, there is an isometric immersion $F: (\Omega, g|_{\Omega})\to (\Real^{n+1}, g_{\Real})$.  This map is actually an embedding as $\Omega$ is geodesically convex. Moreover, one has that
    $$\lambda[F(\partial \Omega)]=\lambda_g^0[\partial \Omega]=\lambda[\mathbb{S}^n]$$
     and so, by \cite[Theorem 1.1]{BernsteinWang1} and \cite[Theorem 0.1]{JZhu}, up to translation, $F(\Omega)=\bar{B}_{R}(0)$, a closed Euclidean ball of radius $R$ for some $R>0$.
\end{proof}

\begin{proof}[Proof of Theorem \ref{RigidityThm}]
  We first suppose that $\partial \Omega $ is connected.
	As $\Omega $ is compact, for any fixed $x_1$, there is an $R_1>0$ sufficiently large so $\Omega\subset \mathcal{B}_{\frac{1}{2}R_1}^g(x_1)$.  As $(M,g)$ is a Cartan-Hadamard manifold one has that $\partial \mathcal B_{R_1}^g(x_1)$ is smooth and strictly mean convex.   In particular, the classical mean curvature flow, $t\in [0,T)\mapsto \Sigma_t$ of $\Sigma_0=\partial \Omega$ exists uniquely and remains inside of $\mathcal B_{R_1}^g(x_1)$.  As there are no closed minimal surfaces in a Cartan-Hadamard manifold the flow must also form a singularity in finite time -- we take $T<\infty$ to be the maximal smooth time of existence.  As shown in the proof of Proposition \ref{LowerBoundProp} the hypotheses on $\Omega$ ensure there is an $x_0\in {\mathcal{B}}^g_{R_1}(x_1)$ so the flow forms a spherical singularity at time $T$ and at the point $x_0$.


  With $V=\bar{\mathcal{B}}^g_{R_1}(x_1)$, pick $\gamma>0$ as in Proposition \ref{RigidConvexProp}.   We may choose compact sets $\Omega_t$ so $\partial \Omega_{t}=\Sigma_t$ and $\Omega_t\subset V$.   As $x_0\in {\mathcal{B}}^g_{R_1}(x_1)$ and the flow forms a spherical singularity at $(T,x_0)$, there is an $\epsilon>0$ small that ensures $\Sigma_{T-\epsilon}$ has principal curvatures greater than $\gamma$ with respect to the inward normal to $\Omega_{T-\epsilon}$.  As $\lambda_g^0[\Sigma_{T-\epsilon}]=\lambda[\mathbb{S}^n]$ it follows from Proposition \ref{RigidConvexProp} that
  	$(\Omega_{T-\epsilon}, g|_{\Omega_{T-\epsilon}})$ is isometric to $(\bar{B}_{R(T-\epsilon)}, g_{\Real}|_{\bar{B}_{R(T-\epsilon)}})$.  Moreover,  for all $t\in [T_\epsilon, T)$, it is true that $(\Omega_t, g|_{\Omega_t})$ is isometric  to $(\bar{B}_{R(t)}, g_{\Real}|_{\bar{B}_{R(t)}})$ where one easily checks that $R(t)=\sqrt{2n(T-t)}$. Define a set of good times by  	
  	$$
  \mathcal{G}=\set{t\in [0, T): (\Omega_t, g|_{\Omega_t}) \mbox{ isometric to }(\bar{B}_{R(t)}, g_{\Real}|_{B_{R(t)}})}.
  	$$
  	Clearly, $[T-\epsilon, T)\subset \mathcal{G}$ and $\mathcal{G}$ is connected.  Set $t_0=\inf \mathcal{G}$.  If $t_0=0$, then we have verified the result when $\partial \Omega$ is connected. If $t_0>0$ then,  we can set 
  	$$
  	\gamma_0=(2n(T-t_0))^{-1/2}>0
  	$$
  	which is the value of the principle curvatures of $\Sigma_{t_0}$. 
  	As $\Omega_{t_0}$ is isometric to flat space, for $t'<t_0$ sufficiently close to $t_0$ we can ensure that the constant $\gamma$ given by Proposition \ref{RigidConvexProp} for the region $\Omega_{t'}$ is less than $\frac{1}{2}\gamma_0$.
  Hence, Proposition \ref{RigidConvexProp} also applies to $\Omega_{t'}$ which immediately implies that $t'\in \mathcal{G}$, a contradiction.
  	
   Finally, we suppose $\partial \Omega$ has  more than one component.  Let $\Sigma'$ be a choice of component and suppose $\Omega'$ is the compact region bounded by $\Sigma'$ (such a region exists by Alexander duality).  As $\Sigma'$ is compact and, by Proposition \ref{LowerBoundProp} satisfies $\lambda[\Sigma']>1$, it follows from Proposition \ref{CpctEntProp} that $\lambda[\Sigma']< \lambda[\Sigma_0]= \lambda[\mathbb{S}^n]$, but applying the previous argument to $\Omega'$ shows that $\lambda[\Sigma']=\lambda[\mathbb{S}^n]$ which is a contradiction and so $\partial \Omega$ has only one component. 
\end{proof}

%
%
\bibliographystyle{hamsabbrv}
\bibliography{Library2}
\end{document}